\documentclass[a4paper,12pt]{article}
\usepackage{amsmath}
\usepackage{amssymb}
\usepackage{amsthm}

\usepackage{amsfonts}
\usepackage{mathscinet}
\usepackage{enumerate}
\usepackage{pdfsync}
\usepackage{setspace}
\usepackage{mathrsfs}
\usepackage{stmaryrd}
\usepackage{tikz}
\usetikzlibrary{arrows,chains,matrix,positioning,scopes}
\usepackage{cite}
 \usepackage{enumitem}

\usepackage{bookmark}
\usepackage{authblk}

\newcommand {\new} {\newcommand}
\newcommand \oper[2] {\new #1 {\operatorname{#2}}}

\newcommand \gcode[1] {\ulcorner\! #1 \!\urcorner}	
\newcommand \se [1] { \{ #1 \}}			
\newcommand\set [2]{ \{#1:#2\} }			
\newcommand\res {\!\upharpoonright\!}		



\newcommand\eqiv {\leftrightarrow}			
\newcommand\corner[1] {
  \langle #1 \rangle}		

\newcommand\power {{\mathscr{P}}}			




\newcommand\coll{{\text{Coll}}}

\newcommand{\card}{\operatorname{card}}


\newcommand\concat {{^\frown}}   

\oper{\Ord}{Ord}				
\oper{\hull}{Hull}				
\oper{\ppt}{ppt} 				
\newcommand{\ord} {\Ord}
\oper{\ZFC}{ZFC}				
\oper{\rank}{rank}				
\oper{\crit}{cr}					
\oper{\crt}{crt}					
\oper{\cf}{cf}					

\oper{\height}{ht}				
\oper{\wfcore}{wfcore}					
\oper{\core}{core}					

\oper{\Ult}{Ult}				
\oper{\Cone}{Cone}				
\oper{\dirlim}{dirlim}
\oper{\rud}{rud}		
\oper{\const}{const}
\oper{\OD}{OD}
\oper{\final}{final}
\oper{\HYP}{HYP}
\oper{\wfp}{wfp}
\oper{\Hom}{Hom}
\new{\ult} {\Ult}
\oper{\dom}{dom}
\oper{\rep}{rep}

\newcommand{\bardom}{\dom^{*}}
\newcommand{\exdesc}{\desc^{*}}
\newcommand{\exexdesc}{\desc^{**}}

\oper{\suc}{succ}
\oper{\fac}{fac}

\oper{\Code}{Code}

\oper{\ran}{ran}
\oper{\maxdom}{maxdom}
\oper{\maxran}{maxran}

\oper{\lp}{Lp} 

\newcommand{\Los}{{\L}o\'{s}}
\newcommand\iniseg {\vartriangleleft}		

 \newtheorem{theorem}{Theorem}[section]
 
 \newtheorem{lemma}[theorem]{Lemma}

 \newtheorem{claim}[theorem]{Claim}
 \theoremstyle{definition}
 
 \newtheorem{definition}[theorem]{Definition}




\newcommand{\game}{{\Game}}
\newcommand{\boldpi}[1]{{\boldsymbol{\Pi}^1_{#1}}}
\newcommand{\boldsigma}[1]{{\boldsymbol{\Sigma}^1_{#1}}}

\newcommand{\bolddelta}[1]{{\boldsymbol{{\delta}}^1_{#1}}}
\newcommand{\boldDelta}[1]{{\boldsymbol{{\Delta}}^1_{#1}}}

\newcommand{\WO}{{\textrm{WO}}}
\newcommand{\LO}{{\textrm{LO}}}
\newcommand{\DEF}{=_{{\textrm{DEF}}}}

\newcommand{\desc}{{\operatorname{desc}}}

\newcommand{\Diff}{\operatorname{Diff}}

\newcommand{\id}{\operatorname{id}}
\newcommand{\tree}{\operatorname{tree}}
\newcommand{\node}{\operatorname{node}}
\newcommand{\seed}{\operatorname{seed}}
\newcommand{\pred}{\operatorname{pred}}

\newcommand{\ot}{\mbox{o.t.}}

\newcommand{\sharpcode}[1]{  \left|   #1  \right|}

\newcommand{\wocode}[1]{  \|   #1  \|}

\newcommand{\comm}[1]{{}}
\newcommand{\comp}[2]{{}^{#1}\!#2}
\oper{\ucf}{ucf}					
\oper{\sign}{sign}					
\oper{\lh}{lh}

\newcommand{\ooo}{^{(3)}_{\omega^{\omega^{\omega}}}}

\begin{document}

\title{The higher sharp IV: the higher levels}

\author{Yizheng Zhu}

\affil{Institut f\"{u}r mathematische Logik und Grundlagenforschung \\
Fachbereich Mathematik und Informatik\\
Universit\"{a}t M\"{u}nster\\
Einsteinstr. 62 \\
48149 M\"{u}nster, Germany}

\maketitle

\begin{abstract}
We establish the descriptive set theoretic representation of the mouse $M_n^{\#}$, which is called $0^{(n+1)\#}$. This part deals with the case $n>3$. 
\end{abstract}

\section{Introduction}
\label{sec:introduction}

This is the final part of a series starting with \cite{sharpI}. 
In this paper, we generalize the previous three papers to the higher levels in the projective hierarchy. Section~\ref{sec:synt-defin} makes the purely syntactical definitions on trees of uniform cofinalities and descriptions that will show up in the higher levels. Section~\ref{sec:induction-hypothesis} writes down all the inductive definitions and hypotheses under $\boldDelta{2n}$-determinacy. Section~\ref{sec:induction-under-pi_det} proves a part of the inductive hypotheses in Section~\ref{sec:induction-hypothesis} under $\boldpi{2n+1}$-determinacy. Section~\ref{sec:level-2n+2-sharp} proves the rest of inductive hypotheses under $\boldDelta{2n+2}$-determinacy, thereby finishing a cycle of the induction.

  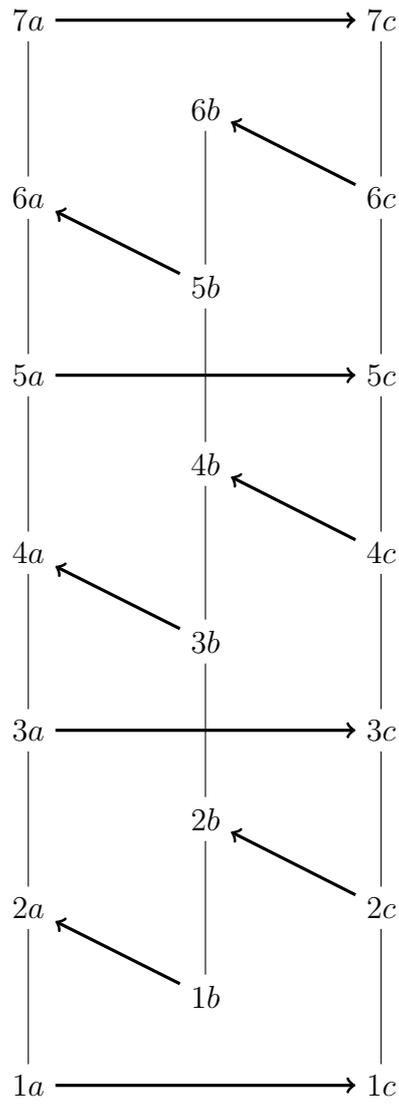
\begin{figure}
    \centering
  \begin{tikzpicture}
    \matrix  (m)  [matrix of math nodes, row sep=1.5em, column sep = 4em]
{7a&&7c\\
&6b&\\
6a&&6c\\
&5b&\\
5a&&5c\\
&4b&\\
4a&&4c\\
&3b&\\
3a & & 3c \\
     & 2b &  \\
2a & & 2c \\
  & 1b & \\
1a & & 1c \\
};
\draw[->, very thick](m-1-1) edge  (m-1-3) 
(m-5-1) edge  (m-5-3) 
(m-3-3) edge (m-2-2) 
(m-4-2) edge  (m-3-1)
(m-9-1) edge  (m-9-3) 
(m-13-1) edge  (m-13-3)
(m-7-3) edge (m-6-2)
(m-11-3) edge (m-10-2)  
(m-8-2) edge  (m-7-1)
(m-12-2) edge  (m-11-1)
;
\draw[-] (m-1-1) edge (m-3-1)
(m-5-1) edge (m-3-1)
(m-5-1) edge (m-7-1)
(m-7-1) edge (m-9-1)
(m-9-1) edge (m-11-1)
(m-11-1) edge (m-13-1)
(m-1-3) edge (m-3-3)
(m-5-3) edge (m-3-3)
(m-5-3) edge (m-7-3)
(m-7-3) edge (m-9-3)
(m-9-3) edge (m-11-3)
(m-11-3) edge (m-13-3)
(m-2-2) edge (m-4-2)
(m-4-2) edge (m-6-2)
(m-6-2) edge (m-8-2)
(m-8-2) edge (m-10-2)
(m-10-2) edge (m-12-2);
  \end{tikzpicture}
\caption{The longer stack of definitions}
\label{fig:1}
     \end{figure}

As introduced in \cite[Section 1.1]{sharpIII}, a technical component in the level-2 and level-3 analysis is a self-similar stack of definitions. This stack grows to the higher levels. The whole picture is in Fig.~\ref{fig:1}. Every node has a distinguished name in this diagram and denotes a tree of uniform cofinality. The number denotes the level of the tree, e.g.\ 5a denotes a level-5 (or level $\leq 5$, to be exact) tree of uniform cofinality. An arrow stands for a factoring map. A solid line stands for membership, e.g.\ 4a is the tree component of an entry of 5a. The stack of definitions goes in the following order:
\begin{itemize}
\item $1c$-description,
\item $(1a,1c)$-factoring map,
\item $(2a,1c)$-description,
\item $(1b,2a,1c)$-factoring map,
\item $(2b,2a,1c)$-description,
\item $(2c,2b,2a)$-factoring map,
\item $(3c,2b,2a)$-description,
\item $(3a,3c,2b)$-factoring map,
\item $(4a,3c,2b)$-description,
\item $(3b,4a,3c)$-factoring map,
\item $(4b,4a,3c)$-description,
\item $(4c,4b,4a)$-factoring map,
\item $(5c,4b,4a)$-description,
\item $(5a,5c,4b)$-factoring map,
\item $(6a,5c,4b)$-description,
\item $(5b,6a,5c)$-factoring map,
\item $(6b,6a,5c)$-description,
\item $(6c,6b,6a)$-factoring map,
\item $(7c,6b,6a)$-description,
\item $(7a,7c,6b)$-factoring map,
\item $\dots$.
\end{itemize}

\section{Syntactical definitions at the higher levels}
\label{sec:synt-defin}

The definitions related to trees of uniform cofinalities are purely syntactical. 
They will be defined inductively. The base of the inductive definition are in \cite{sharpI,sharpII,sharpIII}. For notational consistency, a level-1 tree $P$ will be identified with a function on $P$ with constant value $\emptyset$. 
A level $\leq 1$ tree is $P'=(P)$ where $P$ is a level-1 tree. If $P' =(P)$ is a level $\leq 1$ tree, put $P = \comp{1}{P}'$ and put $\dom(P') = \set{(1,p)}{p \in P}$. $P'$ is regarded as a function on $\dom(P')$ with constant value $\emptyset$. A partial level $\leq 1$ tree $(P,p)$ will be identified with $( (P), (d,p,\emptyset))$, where $d=0$ if $p=-1$, $d=1$ if $p \neq -1$. 
A potential partial level $\leq 1$ tower $(P, (p_i)_{i \leq k})$ will be identified with $ ((P), (d_i,p_i, \emptyset)_{ i \leq k})$ where $d_i = 1$ for any $i < k$, $d_k = 0$ if $p_k = -1$, $d_k = 1$ if $p_k \neq -1$. 

A level $\leq 2n+1$ tree $P$ is \emph{regular} iff $\comp{2n+1}{P}$ is regular and if $\comp{2n+1}{P} = \emptyset $, then $\dom(P) = \set{(2i,\emptyset)}{1 \leq i \leq  n}$. A \emph{partial level $\leq 2n+1$ tree} is a pair $(P, (d,p,Z))$ such that $P$ is a finite regular level $\leq 2n+1$ tree and either
\begin{enumerate}
\item $(d,p) \notin \dom(P)  $, there is a regular level $\leq 2n+1$ tree $P^{+} $ extending $P$ such that $\dom(P^{+}) = \dom(P) \cup \se{(d,p)}$ and $\comp{d}{P}_{\tree}^{+}(p) = Z$, or
\item $\comp{2n+1}{P} \neq \emptyset$, $d=0$, $p=-1$, $Z = \emptyset$.
\end{enumerate}
The \emph{degree} of $(P,(d,p,Z))$ is $d$. Put $\dom(P,(d,p,Z)) =  \dom(P) \cup \se{(d,p)}$. If $d>0$, a \emph{completion} of $(P,(d,p,Z))$ is a level $ \leq 2n+1$ tree $P^{+}$ extending $P$ such that $\dom(P^{+}) = \dom(P) \cup \se{(d,p)}$ and $\comp{d}{P}^{+}_{\tree}(p) = Z$.
The \emph{uniform cofinality} of $(P,(d,p,Z))$ is
\begin{displaymath}
  \ucf(P,(d,p,Z)),
\end{displaymath}
defined as follows:
\begin{enumerate}
\item $\ucf(P,(d,p,Z)) = \ucf(\comp{\leq 2n}{P},(d,p,Z))$ if $d \leq 2n$;
\item $\ucf(P,(d,p,Z)) = (0,-1)$ if $n=1$, $d=0$;
\item $\ucf(P,(d,p,Z)) = (1, p^{-})$ if $n=d=1$;
\item $\ucf(P,(d,p,Z)) = (2n+1, (p', Z, \overrightarrow{(e,z,Q)}))$ if $n \leq 1$, $d=2n+1$, $\comp{2n+1}{P}[p'] = (Z, \overrightarrow{(e,z,Q)})$, and $p'$ is the $<_{BK}$-least element of $\comp{2n+1}{P}\se{p,+,Z}$;
\item $\ucf(P,(d,p,Z)) = (2n+1, (p^{-}, Z, \overrightarrow{(e,z,Q)}))$ if $n \leq 1$, $d=2n+1$, $p \neq ((0))$, $\comp{2n+1}{P}[p^{-}] = (Z^{-}, \overrightarrow{(e,z,Q)})$, and $\comp{2n+1}{P}\se{p,+,Z} = \emptyset$;
\item $\ucf(P,(d,p,Z)) = (2n+1, \emptyset)$ if $n\geq1$, $d=2n+1$, $p = ((0))$.
\end{enumerate}

A \emph{partial level $\leq 2n+1$ tower of discontinuous type} is a nonempty finite sequence $(\vec{P}, \overrightarrow{(d,p,Z)}) = (P_i, (d_i,p_i,Z_i))_{i \leq k}$ such that $\comp{2n+1}{P}_0 = \emptyset$, each $(P_i, (d_i,p_i,Z_i))$ is a partial level $\leq 2n+1$ tree, and $P_{i+1}$ is a completion of $(P_i, (d_i,p_i,Z_i))$. Its \emph{signature} is $((d_i,p_i))_{i < k}$. Its \emph{uniform cofinality} is $\ucf(P_k,(d_k,p_k,Z_k))$. A \emph{partial level $\leq 2n+1$ tower of continuous type} is $ (P_i, (d_i,p_i,Z_i))_{i < k} \concat (P_{*})$ such that either $k=0 \wedge \comp{2n+1}{P}_{*} = \emptyset$ or $ (P_i, (d_i,p_i,Z_i))_{i < k}$ is a partial level $\leq 2n+1$ tower of discontinuous type $\wedge P_{*}$ is a completion of $(P_{k-1},(d_{k-1},p_{k-1},Z_{k-1}))$. Its \emph{signature} is $(d_i,p_i)_{i < k}$. When $k>0$, its \emph{uniform cofinality} is $(1, q_{k-1})$ if $d_{k-1} = 1$, $(d_{k-1}, (p_{k-1} ) \concat \comp{d_{k-1}}{P}_{*}[p_{k-1}])$ if $d_{k-1}>1$. A \emph{potential partial level $\leq 2n+1$ tower} is $(P_{*},\overrightarrow{(d,p,Z)})$ such that for some $\vec{P} = (P_i)_{i \leq k}$, either $P_{*} = P_k \wedge (\vec{P}, \overrightarrow{(d,p,Z)})$ is a partial level $\leq 2n+1$ tower of discontinuous type or $(\vec{P},\overrightarrow{(d,p,Z)}) \concat (P_{*})$ is a partial level $\leq 2n+1$ tower of continuous type. The signature, (dis-)continuity type, uniform cofinality of $(P_{*},\overrightarrow{(d,p,Z)})$ are defined according to the partial level $\leq 2n+1$ tree generating $(P_{*}, \overrightarrow{(d,p,Z)})$.
\begin{displaymath}
  \ucf(P_{*}, \overrightarrow{(d,p,Z)})
\end{displaymath}
denotes the uniform cofinality of $(P_{*}, \overrightarrow{(d,p,Z)})$. If $(P_{*}, (d_i,p_i,Z_i)_{i \leq k})$ is a potential partial level $\leq 2n+1$ tower of discontinuous type, then $P^{+} $ is one of its completions iff $P^{+}$ is a completion of $(P_{*}, (d_k,p_k,Z_k))$.

A \emph{level-($2n+2$) tree} is a function $Q$ such that $\dom(Q)$ is a tree of level-1 trees, $\emptyset \in \dom(Q)$ and 
 for any $q \in \dom(Q)$, $(Q (q \res l))_{l \leq  \lh(q)}$ is a partial level $\leq 2n+1$ tower of discontinuous type.
In particular,  $Q(\emptyset) = (P, (2n+1,((0)),Z))$ where $\dom(P) = \dom(Z) = \set{(2i,\emptyset)}{1 \leq i \leq n} $.
 If $Q(q) = (P_q, (d_q, p_q,Z_q))$, we denote $ Q_{\tree}(q) = P_q$, $Q_{\node}(q) = (d_q,p_q)$, $Q[q] = (P_q, (d_{q \res l}, p_{q \res l}, Z_{q \res l})_{l \leq \lh(q)})$. So $Q[q]$ is a potential partial level $\leq 2n+1$ tower of discontinuous type. If $P$ is a completion of $Q(q)$, put $Q[q,P] = (P, (d_{q \res l}, p_{q \res l}, Z_{q \res l})_{l \leq q})$, which is a potential partial level $\leq 2n+1$ tower of continuous type. For $q \in \dom(Q)$, put $Q \se{q} = \set{a \in \omega^{<\omega}}{q \concat (a) \in \dom(Q)}$, which is a level-1 tree; if $P$ is a level $\leq 2n+1$ tree, put $Q \se{q,P} = \set{a \in Q\se{q}}{ Q_{\tree}(q \concat (a)) = P}$.

For $Q$ a level-($2n+2$) tree, let $\bardom(Q) = \dom(Q) \cup \set{q\concat (-1)}{q \in \dom(Q)}$. If $q \neq \emptyset$, denote $Q \se{q,-} = \se{q^{-} \concat (-1)} \cup \set{q^{-} \concat (a)}{Q_{\tree}(q^{-} \concat (a)) = Q_{\tree}(q) \wedge a <_{BK} q(\lh(q)-1)}$, $Q \se{q,+,P} = \se{q^{-}} \cup  \set{q^{-} \concat (a)}{Q_{\tree}(q^{-} \concat (a)) = P \wedge a >_{BK} q(\lh(q)-1)}$. $Q \se{q,+,P}$ is defined even when $q \notin \dom(Q)$.  For $q \in \bardom(Q)$, $q$ is \emph{of discontinuous type} if $q \in \dom(Q)$; $q$ is \emph{of continuous type} if $q \in \bardom(Q)\setminus \dom(Q)$. 
A \emph{$Q$-description} is a triple
\begin{displaymath}
  \mathbf{q} = (q, P, \overrightarrow{(d,p,Z)})
\end{displaymath}
such that $q \in \bardom(Q)$ and either $q$ is of discontinuous type $\wedge (P,\overrightarrow{(d,p,Z)}) = Q[q]$ or $q$ is of continuous type $\wedge (P,\overrightarrow{(d,p,Z)}) = Q[q^{-},P]$. A $Q$-description $(q,P,\overrightarrow{(d,p,Z)})$ is \emph{of (dis-)continuous type} iff $q$ is of (dis-)continuous type. The \emph{constant $Q$-description} is $(\emptyset) \concat Q[\emptyset]$. If a $Q$-description $\mathbf{q}  =  (q, P, \overrightarrow{(d,p,Z)}) $ is of discontinuous type and $P^{+}$ is a completion of $Q(q)$, then $\mathbf{q} \concat (-1,P^{+}) = (q \concat (-1), P^{+}, \overrightarrow{(d,p,Z)})$.
$Q$ is \emph{$\Pi^1_{2n+2}$-wellfounded} iff
\begin{enumerate}
\item $\forall q \in \dom(Q) ~ Q\se{q} $ is $\Pi^1_{1}$-wellfounded,
\item $\forall y \in [\dom(Q)]~ Q(y) \DEF \cup_{n<\omega} Q_{\tree}(y \res n)$ is not $\Pi^1_{2n+1}$-wellfounded. 
\end{enumerate}

A \emph{level $\leq 2n+2$ tree} is a tuple $Q = (\comp{1}{Q},\dots,\comp{2n+2}{Q})$ such that $\comp{d}{Q}$ is a level-$d$ tree for $1 \leq d \leq 2n+2$. $\comp{d}{Q}$ always stands for the level-$d$ component of a level $\leq 2n+2$ tree $Q$. $\comp{\leq d}{Q} $ denotes the level $\leq d$ tree $(\comp{1}{Q},\dots,\comp{d}{Q})$.
$\dom(Q) = \cup_d (\se{d} \times \dom(\comp{d}{Q}))$. $Q$ is regarded as a function sending $(d,q) \in \dom(Q)$ to $\comp{d}{Q}(q)$. $\bardom(Q) = \cup_d (\se{d} \times \bardom(\comp{d}{Q}))$.  $\desc(Q) = \cup_d (\se{d} \times \desc(\comp{d}{Q}))$ is the set of $Q$-descriptions. $(d,\mathbf{q}) \in \desc(Q)$ is \emph{of continuous type} iff $d \geq 2$ and $\mathbf{q}$ is of continuous type; otherwise, $(d,\mathbf{q})$ is \emph{of discontinuous type}. $Q$ is \emph{$\Pi^1_{2n+2}$-wellfounded} iff $\comp{\leq 2n+1}{Q}$ is $\Pi^1_{2n+1}$-wellfounded and $\comp{2n+2}{Q}$ is $\Pi^1_{2n+2}$-wellfounded.

Suppose $Q$ is a level $\leq 2n+2$ tree. An \emph{extended $Q$-description} is either a $Q$-description or of the form $(d, (q,P,\overrightarrow{(e,p,Z)}))$ such that $(d, (q \concat (-1), P, \overrightarrow{(e,p,Z)}))$ is a $Q$-description of continuous type. $\exdesc(Q)$ is the set of extended $Q$-descriptions. $(d,\mathbf{q}) \in \exdesc(Q)$ is \emph{regular} iff either $(d,\mathbf{q}) \in \desc(Q)$ of discontinuous type or $(d,\mathbf{q} ) \notin \desc(Q)$. 

A \emph{partial level $\leq 2n+2$ tree} is a pair $(Q, (d,q,P))$ such that $Q$ is a finite level $\leq 2n+2$ tree, and either
\begin{enumerate}
\item $(d,q) \notin \dom(Q)$, there is a level $\leq 2n+2$ tree $Q^{+}$ extending $Q$ such that $\dom(Q^{+}) = \dom(Q) \cup \se{(d,q)}$ and $\comp{d}{Q}^{+}_{\tree}(q) = P$, or
\item $(d,q,P) = (0,-1,\emptyset)$. 
\end{enumerate}
The \emph{degree} of $(Q,(d,q,P))$ is $d$. If $d>0$, a \emph{completion} of $(Q,(d,q,P))$ is a level $\leq 2n+2$ tree $Q^{+}$ extending $Q$ such that $\dom(Q^{+}) = \dom(Q) \cup \se{(d,q)}$ and $\comp{d}{Q}^{+}_{\tree}(q) = P$. When $n=1$, the uniform cofinality of $(Q,(d,q,P))$ has been defined in \cite{sharpIII}.   When $n \geq 1$, the \emph{uniform cofinality} of $(Q,(d,q,P))$ is
\begin{displaymath}
  \ucf(Q,(d,q,P)),
\end{displaymath}
defined as follows:
\begin{enumerate}
\item $\ucf(Q,{(d,q,P)}) = \ucf(\comp{\leq 2n}{Q}, {(d,q,P)})$ if $d \leq 2n$;
\item $\ucf(Q,{(d,q,P)}) = (2n+2, (\emptyset) \concat \comp{2n+2}{Q}[\emptyset]) $ if $d=2n+1$, $q = \max_{<_{BK}}(\dom(\comp{2n+1}{Q}))$;
\item $\ucf(Q,(d,q,P)) = (d, (q', P, \overrightarrow{(e,p,Z)}))$ if $2n+1 \leq d \leq 2n+2$, $\comp{d}{Q}[q'] = (P, \overrightarrow{(e,p,Z)})$, and $q'$ is the $<_{BK}$-least element of $\comp{}{Q}\se{q,+,P}$, $q' \neq q^{-}$;
\item $\ucf(Q,(d,q,P)) = (d, (q^{-}, P, \overrightarrow{(e,p,Z)}))$ if $2n+1 \leq d \leq 2n+2$, $\comp{d}{Q}[q^{-}] = (P^{-}, \overrightarrow{(e,p,Z)})$, and $\comp{d}{Q}\se{q,+,P} = \se{q^{-}}$.
\end{enumerate}

A \emph{partial level $\leq 2n+2$ tower of discontinuous type} is a nonempty 
finite sequence $(Q_i,(d_i,q_i,P_i))_{1 \leq i \leq k}$ such that $\dom(Q_1) = \set{(2i,\emptyset)}{1 \leq i \leq n+1}$, each $(Q_i,(d_i,q_i,P_i))$ is a partial level $\leq 2n+2$ tree, and each $Q_{i+1}$ is a completion of $(Q_i,(d_i,q_i,P_i))$.  Its \emph{signature} is $(d_i,q_i)_{1 \leq i < k}$. 
Its \emph{uniform cofinality} is $\ucf(Q_k, (d_k,q_k, P_k))$. 
A \emph{partial level $\leq 2n+2$ tower of continuous type} is $(Q_i,(d_i,q_i,P_i))_{1 \leq i < k} \concat (Q_{*})$ such that either $k=0 \wedge Q_{*}$ is the level $\leq 2n+2$ tree with domain $\set{(2j,\emptyset)}{1 \leq j \leq n+1}$ or $(Q_i,(d_i,q_i,P_i))_{1 \leq i < k}$ is a partial level $\leq 2n+2$  tower of discontinuous type $\wedge Q_{*}$ is a completion of $(Q_{k-1}, (d_{k-1},q_{k-1},P_{k-1}))$. 
Its \emph{signature} is $(d_i,q_i)_{1 \leq i < k}$. If $k>0$, its \emph{uniform cofinality} is $(1, q_{k-1})$ if $d_{k-1}=1$,  $(d_{k-1}, (q_{k-1}) \concat \comp{d_{k-1}}{Q}[q_{k-1}])$ if $d_{k-1} >1$.
A \emph{potential partial level $\leq 2n+2$ tower} is $(Q_{*},\overrightarrow{(d,q,P)}) $ such that for some $\vec{Q} = (Q_i)_{1 \leq i \leq k}$, either $Q_{*} = Q_k$ $\wedge$ $(\vec{Q}, \overrightarrow{(d,q,P)})$ is a partial level $\leq 2n+2$ tower of discontinuous type or $(\vec{Q}, \overrightarrow{(d,q,P)}) \concat (Q_{*})$ is a partial level $\leq 2n+2$ tower of continuous type. The signature, (dis-)continuity type, uniform cofinality of $(Q_{*},\overrightarrow{(d,q,P)})$ are defined according to the partial level $\leq 2n+2$ tree generating $(Q_{*}, \overrightarrow{(d,q,P)})$. 
\begin{displaymath}
  \ucf(Q_{*}, \overrightarrow{(d,q,P)})
\end{displaymath}
denotes the uniform cofinality of $(Q_{*}, \overrightarrow{(d,q,P)})$.

A \emph{level-($2n+3$) tree} is a function $R$ such that $\emptyset \notin \dom(R)$, $\dom(R)\cup \se{\emptyset}$ is a tree of level-1 trees and for any $r \in \dom(R)$, $(R(r \res l))_{1 \leq l \leq \lh(r)}$ is a partial level $\leq 2n+2$ tower of discontinuous type. 
If $R(r) = (Q_r, (d_r,q_r, P_r))$, we denote  $R_{\tree}(r) = Q_r$, $R_{\node}(r) = (d_r,q_r)$, 
 $R[r] = (Q_r,  (d_{r \res l}, q_{r \res l}, P_{r \res l})_{1 \leq l \leq  \lh(r)})$. $R[r]$ is a potential partial level $\leq 2n+2$ tower of discontinuous type. If $Q$ is a completion of $R(r)$, put $R[r,Q] = (Q, (d_{r \res l}, q_{r \res l}, P_{r \res l})_{1 \leq l \leq  \lh(r)})$, which is a potential partial level $\leq 2n+2$ tower of continuous type. For $r \in \dom(R)\cup \se{\emptyset}$, put $R\se{r} = \set{a \in \omega^{<\omega}}{r \concat (a) \in \dom(R)}$, which is a level-1 tree. For $r \in \dom(R)$ and a level $\leq 2n+2$ tree $Q$, put $R \se{r,Q} = \set{a \in R\se{r}}{R_{\tree}(r) = Q}$. $R$ is \emph{regular} iff $((1)) \notin \dom(R)$.

Suppose  $R$ is a level-($2n+3$) tree. Let $\bardom(R) = \dom(R)\cup \set{r \concat (-1)}{r \in \dom(R) }$. For $r \in \bardom(R)$, $r$ is \emph{of discontinuous type} if $r \in \dom(R)$; $r$ is \emph{of continuous type} if $r \in \bardom(R)\setminus \dom(R)$. 
 For $r \in \dom(R)$, put $R\se{r,- } = \se{r^{-} \concat (-1)} \cup \set{r^{-}\concat (a)}{R_{\tree}(r^{-}\concat (a)) = R_{\tree}(r), a <_{BK} r (\lh(r)-1)}$. For $r \in \dom(R)$ and a level $\leq 2n+2$ tree $Q$, put
 $R\se{r,+,Q } = \se{r^{-}} \cup \set{r^{-}\concat (a)}{R_{\tree}(r^{-}\concat (a)) = Q, a >_{BK} r (\lh(r)-1)}$. The \emph{constant $R$-description} is $\emptyset$, which is of discontinuous type. An \emph{$R$-description} is either the constant $R$-description or a triple
\begin{displaymath}
  \mathbf{r} = (r, Q, \overrightarrow{(d,q,P)})
\end{displaymath}
such that $r \in \bardom(R)$ and either $r$ is of discontinuous type $\wedge (Q,\overrightarrow{(d,q,P)}) = R[r]$ or $r$ is of continuous type $\wedge (Q,\overrightarrow{(d,q,P)}) = R[r^{-},Q]$. A non-constant $R$-description $(r,Q,\overrightarrow{(d,q,P)})$ is \emph{of (dis-)continuous type} iff $r$ is of (dis-)continuous type.  If an $R$-description $\mathbf{r}  =  (r, Q, \overrightarrow{(d,q,P)}) $ is of discontinuous type and $Q^{+}$ is a completion of $R(r)$, then $\mathbf{r} \concat (-1,Q^{+}) = (r \concat (-1), Q^{+}, \overrightarrow{(d,q,P)})$.
An \emph{extended $R$-description} is either an $R$-description or a triple $(r, Q, \overrightarrow{(d,q,P)})$ such that $(r\concat (-1), Q, \overrightarrow{(d,q,P)})$ is an $R$-description of continuous type. 
$\exdesc(R)$ is the set of  extended $R$-descriptions. 
An extended $R$-description $\mathbf{r}$ is \emph{regular} iff either $\mathbf{r} \in \desc(R)$ of discontinuous type or $\mathbf{r} \notin \desc(R)$.
A \emph{generalized $R$-description} is either $(\emptyset,\emptyset,\emptyset)$ or of the form
\begin{displaymath}
  \mathbf{A} = (\mathbf{r}, \pi, T)
\end{displaymath}
so that $\mathbf{r} = (r, Q, \overrightarrow{(d,q,P)}) \in \desc(R) \setminus\se{\emptyset}$, $T$ is a finite level $\leq 2n+2$ tree, 
$\pi$ factors $(Q,T)$. 
 $\exexdesc(R)$ is the set of generalized $R$-descriptions.

$R$ is \emph{$\Pi^1_{2n+3}$-wellfounded} iff
\begin{enumerate}
\item $\forall r \in \dom(R) \cup \se{\emptyset}~ R\se{r}$ is $\Pi^1_1$-wellfounded, and
\item 
 $\forall z\in [\dom(R)] ~ R(z) \DEF \cup_{n<\omega}(R_{\tree}(z \res n))_{1 \leq n<\omega}$ is not $\Pi^1_{2n+2}$-wellfounded.
 \end{enumerate}
If $R$ is a level-($2n+3$) tree, $\mathcal{L}^R$ is the language $\set{\underline{\in}, \underline{c_r}}{r \in \dom(R)}$, and $\mathcal{L}^{\underline{x},R}$ is the language $\mathcal{L}^R \cup \se{\underline{x}}$. 

A \emph{level $\leq 2n+3$ tree} is a tuple $R = (\comp{1}{R},\dots, \comp{2n+3}{R})$ such that $\comp{d}{R}$ is a level-$d$ tree for any $d$. $\comp{d}{R}$ always stands for the level-$d$ component of $R$. $\comp{\leq d}{R}$ stands for the level $\leq d$ tree $(\comp{1}{R},\dots,\comp{d}{R})$.  If $Z$ is a level $\leq 2n+2$ tree and $W$ is a level-($2n+3$) tree, then $Z \oplus W$ denotes the level $\leq 2n+3$ tree $(\comp{1}{Z},\dots,\comp{2n+2}{Z},W)$.  A level $\leq 2n+3$ tree $R$ is \emph{$\Pi^1_{2n+3}$-wellfounded} iff $\comp{\leq 2n+2}{R}$ is $\Pi^1_{2n+2}$-wellfounded and $\comp{2n+3}{R}$ is $\Pi^1_{2n+3}$-wellfounded. If $R$ is level $\leq 2n+3$ tree, define $\dom(R) = \cup_d \se{d} \times \dom(\comp{d}{R})$, $\desc(R) = \cup_d \se{d} \times \desc(\comp{d}{R})$. $R$ is regarded as a function sending $(d,r)$ to $\comp{d}{R}(r)$.

If $R$ is a level $\leq m$ tree and $(d,r )\in \dom(R)$, $d>1$, put $R[d,r] = \comp{d}{R}[r]$. 

Suppose $\sigma$ factors level-1 trees $(P,W)$. 
If $p \in P$, then $(\sigma,W)$ is  \emph{continuous at $(1,p)$} iff either $\sigma(p) = \min(\prec^W)$ or $\pred_{\prec^W}(\sigma(p)) \in \ran(\sigma)$; otherwise $(\sigma,W)$ is discontinuous at $(1,p)$.

Suppose $Q,T$ are level-$m$ trees. 
A map $\pi$ is said to \emph{factor $(Q,T)$} iff $\dom(\pi) = \dom(Q)$, $q <_{BK}q' \eqiv \pi(q) <_{BK} \pi(q')$, $q \subseteq q' \eqiv \pi(q) \subseteq \pi(q')$, and for any $q \in \dom(Q)$, $Q(q) = T(\pi(q))$. 
If $\pi$ factors $(Q,T)$, $\pi$ is allowed to act on extended $Q$-descriptions as well. If $m>1$ and $\mathbf{q}= (q,P, \overrightarrow{(d,p,Z)}) \in \exdesc(Q)$ then
\begin{displaymath}
  \pi(\mathbf{q}) =
  \begin{cases}
    ({\pi}(q), P, \overrightarrow{(d,p,Z)}) & \text{if } q \text{ is of discontinuous type},\\
    ({\pi}(q^{-}) \concat (-1), P, \overrightarrow{(d,p,Z)}) & \text{otherwise.}
  \end{cases}
\end{displaymath}

Suppose $Q,T$ are level $\leq m$ trees. $\pi$ is said to \emph{factor $(Q,T)$} iff $\dom(\pi) = \dom(Q)$ and there is $(\comp{d}{\pi})_{1 \leq d \leq m}$ such that for any $d$, $\comp{d}{\pi}$ factors $(\comp{d}{Q}, \comp{d}{T})$ and $\pi(d,q) = (d, \comp{d}{\pi}(q))$ for any $q \in \dom(\comp{d}{Q})$. If $\pi$ factors $(Q,T)$, $\comp{d}{\pi}$ has this fixed meaning. 
Suppose $Q,T$ are both finite and suppose $\pi$ factors $(Q,T)$. $\pi$ is allowed to act on extended $Q$-descriptions as well. If $(d,\mathbf{q}) \in \exdesc(Q)$, then  $\pi(d,\mathbf{q}) = (d, \comp{d}{\pi}(\mathbf{q}))$. \emph{level-$m$ tree isomorphisms} and \emph{level $\leq m$ tree isomorphisms} have obvious definitions. If $Q$ is a level-$m$ or a level $\leq m$ tree, $\id_Q$ is the identity tree isomorphism between $Q$ and itself. If $\pi$ factors $(Q,T)$ and $\vec{\beta} = (\beta_{(d,t)})_{(d,t ) \in \dom(T)} $ is a tuple indexed by $\dom(T)$, then $\vec{\beta}_{\pi} = (\beta_{\pi, (d,q)})_{(d,q) \in \dom(Q)}$, where $\beta_{\pi, (d,q)} = \beta_{\pi(d,q)}$.

Suppose $Q,T$ are level $\leq m$ trees and $\pi$ factors $(Q,T)$. 
 $(\pi,T)$ is said to be \emph{discontinuous} at $(0,-1)$. Suppose $(d,\mathbf{q}) \in \exdesc(Q)$ is regular.  $(\pi,T)$ is \emph{continuous at $(d,\mathbf{q})$} iff one of the following holds:
\begin{enumerate}
\item $d=1$, either $\comp{1}{\pi}(\mathbf{q}) = \min (\prec^{\comp{1}{T}})$ or $\pred_{\prec^{\comp{1}{T}}} ( \comp{1}{\pi}(\mathbf{q})) \in \ran( \comp{1}{\pi})$.
\item $d=2i$, $\mathbf{q} = (\emptyset, \dots) \in \desc(\comp{2i}{Q})$, either $\comp{2i-1}{T}=\emptyset$ or $\max_{<_{BK}}( \dom(\comp{2i-1}{T})) \in \ran(\comp{2i-1}{\pi})$.
\item $d>1$, $\mathbf{q} = (q,\dots) \in \desc(\comp{d}{Q})$, $q \neq \emptyset$, and letting $t' = \max_{<_{BK}} \comp{d}{T} \se{ \comp{d}{\pi}(q),-}$, then either $t' = \comp{d}{\pi}(q^{-}) \concat (-1)$ or $t' \in \ran(\comp{d}{\pi})$.
\item $d>1$, $\mathbf{q} = (q, P,\dots) \notin \desc(\comp{d}{Q})$, and letting $a = \max_{<_{BK}} (\comp{d}{T}\se{\comp{d}{\pi}(q),P} \cup \se{-1})$, then either $a=-1$ or $\comp{d}{\pi}(q) \concat (a) \in \ran(\comp{d}{\pi})$.
\end{enumerate}
Otherwise, $(\pi,T)$ is \emph{discontinuous at $(d,\mathbf{q})$}. If  $(\pi,T)$  is discontinuous at $(d,\mathbf{q})$, the \emph{decomposition} of $(\pi,T)$ is $(\pi^{+},Q^{+})$ such that $Q^{+}$ is a level $\leq m$ tree extending $Q$, $\pi^{+}$ factors $(Q^{+},T)$, $\pi^{+}$ extends $\pi$, and
\begin{enumerate}
    \item if $d=1$, then $\dom(Q^{+}) \setminus \dom(Q) = \se{(1, q^{+})}$, $q =  q^{+} \res\comp{1}{Q} $,  
$\comp{1}{\pi}^{+} (q^{+}) = \suc_{\prec^{\comp{1}{T}}} (\comp{1}{\pi}(\pred_{\prec^{\comp{1}{Q}}}(q^{+})))$;
    \item if $d=2$ and $\mathbf{q} = (\emptyset,\emptyset, ((0)))$, then $\dom(Q^{+} ) \setminus \dom(Q)= \se{ (1, q^{+})}$, $\emptyset  = q^{+} \res \comp{1}{Q}$, $\comp{1}{\pi}^{+}(q^{+}) = \min_{\prec^{\comp{1}{T}}} \set{a}{\forall r \in \dom(\comp{1}{Q})~\comp{1}{\pi}(r) \prec^{\comp{1}{T}} a}$;    
\item if $d=2i>2$ and $\mathbf{q} = (\emptyset,\dots) \in \desc(\comp{2i}{Q})$, then $\dom(Q^{+} ) \setminus \dom(Q)= \se{ (2i-1, q^{+})}$, $\lh(q^{+} ) = 1$, 
$\emptyset =  q^{+} (0) \res \comp{2i-1}{Q}\se{\emptyset} $, $\comp{2i-1}{\pi}^{+}(q^{+}) = T( t^{+})$, $\lh(t^{+}) = 1$,  $t^{+}(0)= \min_{\prec^{\comp{2i-1}{T}\se{\emptyset}}} \set{a}{\forall r \in \dom(\comp{2i-1}{Q})~\comp{2i-1}{\pi}(r) \prec^{\comp{2i-1}{T}\se{\emptyset}} (a)})$;
    \item if $d>1$ and  $\mathbf{q} = (q,P, \dots) \in \desc(\comp{d}{Q})$, $q\neq \emptyset$, then $\dom(Q^{+}) \setminus \dom(Q) = \se{(d, q^{+})}$, $q^{+} = \max_{<_{BK}} \comp{d}{Q}^{+}\se{q, -}$, and $\comp{d}{\pi}^{+}(q^{+}) = \comp{d}{\pi}(q^{-}) \concat (a)$, $a=
\min_{<_{BK}} 
\set{b}{ \comp{d}{Q}_{\tree}(q \concat (a))  =  P \wedge \forall r \in \comp{d}{Q}(q,-)\setminus \se{q^{-} \concat (-1)}~  \comp{d}{\pi}(r) <_{BK} \comp{d}{\pi}(q^{-}) \concat (b)}$;
    \item if $d>1$ and  $\mathbf{q} = (q,P, \dots) \notin \desc(\comp{d}{Q})$, then $\dom(Q^{+}) \setminus \dom(Q) = \se{(d, q^{+})}$, $q^{+} = q \concat (
\max_{<_{BK}} \comp{d}{Q}^{+}\se{q})$, 
$\comp{d}{\pi}^{+}(q^{+}) = \comp{d}{\pi}(q) \concat (a)$, 
$a= \min_{<_{BK}} \set{b}{ \comp{d}{Q}_{\tree}(q \concat (a)) = P \wedge  \forall  c \in \comp{d}{Q}\se{q,P} ~ \comp{d}{\pi}(q \concat (c)) <_{BK} \comp{d}{\pi}(q) \concat (b)}$.
\end{enumerate}
If $(\pi,T)$ is discontinuous at $(d,\mathbf{q})$,
then $\pred(\pi, T, (d,\mathbf{q}))$ is a node in $\dom(T)$ defined as follows:
\begin{enumerate}
\item If $d=1$, then $\pred(\pi, T, (d,\mathbf{q})) = (1, \pred_{\prec^{\comp{1}{T}}}( \comp{1}{\pi}(\mathbf{q})))$.
\item If $d=2i$ and $\mathbf{q} = (\emptyset,\dots) \in \desc(\comp{2i}{Q})$, then $\pred(\pi,T,(d,\mathbf{q})) = (2i-1, \max_{<_{BK}} \dom(\comp{2n-1}{T}))$.
\item If $d>1$ and $\mathbf{q} = (q,\dots) \in \desc(Q)$, $q \neq \emptyset$, then $\pred(\pi,{T},(d,\mathbf{q})) = (d, \max_{<_{BK}} \comp{d}{T}\se{\comp{d}{\pi}(q),-})$.
\item If $d>1$ and $\mathbf{q} = (q, P,\dots) \notin \desc(Q)$, then $\pred(\pi, T, (d,\mathbf{q})) = (d, q \concat (a))$, $a = \max_{<_{BK}} \comp{d}{T}\se{\comp{d}{\pi}(q),P}$.
\end{enumerate}
If $(d,\mathbf{q}) = (d, (q,\dots)) \in \desc(Q)$ then put $\pred(\pi,T, (d,q)) = \pred(\pi,T, (d, \mathbf{q}))$.

Suppose $R$ is a finite level $\leq 2n+1$ tree. For $\mathbf{A} = (\mathbf{r}, \pi, T)\in \exexdesc(\comp{2n+1}{R})$, define its uniform cofinality 
\begin{displaymath}
  \ucf(\mathbf{A})
\end{displaymath}
as follows: If $\mathbf{r} = \emptyset$ then $\ucf(\mathbf{A}) = \mathbf{A}$. Suppose $\mathbf{r} = (r, Q, \overrightarrow{(d,q,P)})\neq \emptyset$, $\lh(r) = k$. 
\begin{enumerate}
\item If $r$ is of continuous type and  $(\pi,T)$ is continuous at $(d_{k-1},q_{k-1})$, then $\ucf(\mathbf{A}) = (r^{-}, R_{\tree}(r^{-}), \overrightarrow{(d,q,P)})$.
\item If $r$ is of continuous type and $(\pi,T)$ is continuous at $(d_{k-1},q_{k-1})$, then $\ucf(\mathbf{A}) = (r^{-}, Q, \overrightarrow{(d,q,P)})$.
\item If $r$ is of discontinuous type and $(\pi,T)$ is continuous  at $(d_{k-1},q_{k-1})$, then $\ucf(\mathbf{A}) = \mathbf{r}$.
\item If $r$ is of discontinuous type and $(\pi,T)$ is continuous at $(d_{k-1},q_{k-1})$, then $\ucf(\mathbf{A}) = (r, Q^{+}, \overrightarrow{(d,q,P)})$, where $(Q^{+},\pi^{+})$ is the decomposition of $(\pi,T)$.
\end{enumerate}

We fix the notation for the trivial level $\leq 2n$ tree:
\begin{itemize}
\item $Q^{(2n)}_0$ is the level $\leq 2n$ tree with domain $\set{(2i,\emptyset)}{1 \leq i \leq n}$.
\end{itemize}

To be consistent with the higher levels, we rename some definitions in \cite{sharpIII} concerning descriptions. 

Suppose $Q$ is a level $\leq 2$ tree and $W$ is a level-1 tree. Put $W' = (W)$, a level $\leq 1$ tree. Then $\desc(Q, W',*) = \desc(Q,W)$ is the set of $(Q,W',*)$-descriptions. Suppose $\mathbf{D} \in \desc(Q,W)$. 
If $\sign(\mathbf{D}) = (w_i)_{i < k}$ viewing $\mathbf{D}$ as a $(Q,W)$-description, then the \emph{signature} of $\mathbf{D}$ is $\sign (\mathbf{D})= ((1,w_i))_{i < k}$. If $\ucf(\mathbf{D}) = -1$ viewing $\mathbf{D}$ is a $(Q,W)$-description then $\ucf(\mathbf{D}) = (0,-1)$. If $\ucf(\mathbf{D}) = w_{*}$ viewing $\mathbf{D}$ is a $(Q,W)$-description then $\ucf(\mathbf{D}) = (1, w_{*})$.

Inductively, we make the syntactical definitions of descriptions. In the rest of this section, we suppose that $n_1,n_2,n_3$ are consecutive entries of the following list:
\begin{equation}
  \label{eq:3}
    1, 2, 2, 3, 4, 4, 5, 6, 6, 7, 8, 8, 9\dots
\end{equation}

Suppose $Y$ is a level $\leq n_3$ tree and $T$ is a level $\leq n_2$ tree. Then a \emph{$(Y,T,-1)$-description} is a $(\comp{\leq 2}{Y}, \comp{\leq 1}{T}, *)$-description. The \emph{constant $(Y,T,*)$-description} is
\begin{enumerate}
\item $(n_3, (\emptyset,\emptyset))$, if $n_3$ is odd;
\item $(n_3, ( (\emptyset) \concat \comp{n_3}{Y}[\emptyset], \tau))$, if $n_3$ is even, $\tau$ factors $(\comp{n_3}{Y}_{\tree}(\emptyset), T, *)$.
\end{enumerate}
Suppose $(Q,\overrightarrow{(d,q,P)})$ is a potential partial level $\leq n_1'$ tree, $n_1' \leq n_1$. 
If $n_1'< n_1$, then a $(Y,T, (Q,\overrightarrow{(d,q,P)}))$-description is a $(\comp{\leq n_2}{Y}, \comp{\leq n_1}{T} , (Q,\overrightarrow{(d,q,P)}))$-description. Suppose now $n_1' = n_1$. Put $\overrightarrow{(d,q,P)}=(d_i,q_i,P_i)_{m_0 \leq i \leq m}$, where $m_0 =0$ if $n_1$ is odd, $m_0 = 1$ if $n_1$ is even. Suppose $m>0$. 
 A \emph{$(Y,T, {(Q,\overrightarrow{(d,q,P)})})$-description} is of the form
\begin{displaymath}
  \mathbf{B} = (b, (\mathbf{y}, \pi))
\end{displaymath}
such that
\begin{enumerate}
\item If $n_2$ is odd then $b \in \se{n_3-1, n_3}$. If $n_2$ is even then $d=n_3$.
\item $\mathbf{y} \in \desc(\comp{d}{Y})$ is not the constant $\comp{d}{Y}$-description. Put $\mathbf{y} = (y, X, \overrightarrow{(e,x,W)})$, $\lh(y) = k$. If $d$ is even, put $\overrightarrow{(e,x,W)} = (e_i,x_i,W_i)_{i < \lh(\vec{x})}$. if $d$ is odd, put  $\overrightarrow{(e,x,W)} = (e_i,x_i,W_i)_{1 \leq i \leq \lh(\vec{x})}$.
\item If $d=n_3$ then $\sigma$ factors $(X, T, Q)$. If $d=n_3-1$ then $\sigma$ factors $(X, \comp{\leq n_1}{T}, Q)$.
\item The contraction of $(\sign(\pi(e_i,x_i)))_{i < k}$ is the signature of $(Q,\overrightarrow{(d,q,P)})$. (When $(e_0,x_0)$ is undefined, the contraction of  $(\sign(\pi(e_i,x_i)))_{i < k}$ simply means the contraction of  $(\sign(\pi(e_i,x_i)))_{1 \leq i < k}$.)
\item If $y$ is of continuous type and $(e_{k-1},x_{k-1})$ does not appear in the contraction of $(\sign(\pi(e_i,x_i)))_{ i < k} \concat (\sign(\pi(e_k,x_k)^{-}))$, then $\pi(e_{k-1}, x_{k-1})$ is of discontinuous type. 
\item Put $\ucf(X,\overrightarrow{(e,x,W)}) = (e_{*},\mathbf{x}_{*})$.
  \begin{enumerate}
  \item If $e_{*} = 0$ then $d_m = 0$.
  \item If $e_{*} = 1$ then $\ucf(\pi(1, \mathbf{x}_{*})) = \ucf (Q, \overrightarrow{(d,q,P)})$.
  \item If $e_{*} >1 $, $\mathbf{x}_{*} = (x_{*}, \dots) \in \desc(X)$, then $\ucf(\pi(e_{*}, x_{*})) = \ucf(Q, \overrightarrow{(d,q,P)})$. 
  \item If $e_{*} >1 $, $\mathbf{x}_{*} = (x_{*}, W_{*},\dots) \notin \desc(X)$, then $\ucf^{W_{*}}(\pi(e_{*}, x_{*})) = \ucf(Q, \overrightarrow{(d,q,P)})$. 
  \end{enumerate}
\end{enumerate}
We often abbreviate $(b,(\mathbf{y},\pi))$ by $(b,\mathbf{y}, \pi)$. 
If $Q'$ is a level $\leq n_1'$ tree, a $(Y,T,Q')$-description is a $(Y,T,({Q'},\overrightarrow{(d',q',P')}))$-description for some potential partial level $\leq n_1'$ tower $({Q'},\overrightarrow{(d',q',P')})$ of discontinuous type. A $(Y,T,*)$-description is either the constant $(Y,T,*)$-description or a $(Y,T,Q')$-description,  where either $Q' = -1$ or $Q'$ is a level $\leq n_1'$ tree, $n_1' \leq n_1 $. $\desc(Y,T,({Q},\overrightarrow{(d,q,P)}))$, $\desc(Y,T,Q)$, $\desc(Y,T,*)$ denote the sets of relevant descriptions. 

Suppose $(Q,\overrightarrow{(d,q,P)})$ is a potential partial level $\leq n_1$ tower of discontinuous type, $\overrightarrow{(d,q,P)} = (d_i,q_i,P_i)_{m_0 \leq i \leq m}$,  and $\mathbf{B}\in \desc(Y,T,(Q,\overrightarrow{(d,q,P)}))$, $\mathbf{y} = (y,X, \overrightarrow{(e,x,W)})$, $\lh(y) = k$,  $\overrightarrow{(e,x,W)} = (e_i,x_i,W_i)_i$. The \emph{signature} of $\mathbf{B}$ is
\begin{displaymath}
  \sign (\mathbf{B}) = \text{the contraction of } (\sign_{*}^Q (\pi(e_i,x_i)))_{ i \leq k-1}.
\end{displaymath}
$\mathbf{B}$ is \emph{of continuous type} iff $y$ is of continuous type and $\pi(e_{l-1}, x_{l-1})$ is of $*$-$Q$-continuous type. Otherwise, $\mathbf{B}$ is \emph{of discontinuous type}. The uniform cofinality of $\mathbf{B}$ is
\begin{displaymath}
  \ucf(\mathbf{B}),
\end{displaymath}
defined as follows:
\begin{enumerate}
\item  If $\ucf(X,\overrightarrow{(e,x,W)}) = (0,-1)$ then $\ucf(\mathbf{B}) = (0,-1)$.
\item If $\ucf(X,\overrightarrow{(e,x,W)}) = (1, x_{*})$ then $ \ucf(\mathbf{B}) = \ucf_{*}^Q( \pi(e_{*},x_{*}))$.
\item If $\ucf(X,\overrightarrow{(e,x,W)}) = (e_{*},\mathbf{x}_{*})$, $e_{*}>1$, $\mathbf{x}_{*} = (x_{*},\dots)$, then $ \ucf(\mathbf{B}) = \ucf_{*}^Q( \pi (e_{*},x_{*}))$.
\end{enumerate}
If $d_m>0$, then
$\mathbf{B}$ is said to be \emph{of plus-discontinuous type}, and if  $\ucf(X,\overrightarrow{(e,x,W)}) = (e_{*},\mathbf{x}_{*})$, $\mathbf{x}_{*} = x_{*}$ if $e_{*}=1$, $\mathbf{x}_{*} = (x_{*},\dots)$ if $e_{*}>1$, $Q^{+}$ is a completion of $(Q,(d_m,q_m,P_m))$, put
\begin{displaymath}
  \ucf^{Q^{+}}(\mathbf{B}) = \ucf_{*}^{Q^{+}}(  \pi ( e_{*},x_{*}) ).
\end{displaymath}
The \emph{$*$-signature} of $\mathbf{B}$ is
\begin{displaymath}
  \sign_{*}(\mathbf{B}) =
  \begin{cases}
    ((d, y \res i))_{1 \leq i \leq k-1} & \text{if $y$ is of continuous type},\\
    ((d, y \res i))_{1 \leq i \leq k} & \text{if $y$ is of discontinuous type}.
  \end{cases}
\end{displaymath}
$\mathbf{B}$ is \emph{of $*$-$T$-(dis-)continuous type} iff $\pi$ is $T \otimes Q$-(dis-)continuous at $\ucf(X,\overrightarrow{(e,x,W)})$. The \emph{$*$-$T$-uniform cofinality} of $\mathbf{B}$ is
\begin{displaymath}
  \ucf^T_{*}(\mathbf{B}),
\end{displaymath}
defined as follows. If $y$ is of continuous type,
\begin{enumerate}
\item if $\mathbf{B}$ is of $*$-$T$-continuous type, then $\ucf^T_{*}(\mathbf{B}) = (d, (y^{-}, Y_{\tree}(y^{-}), \overrightarrow{(e,x,W)}))$;
\item if $\mathbf{B}$ is of $*$-$T$-discontinuous type, then $\ucf^T_{*}(\mathbf{B}) = (d, (y^{-}, X, \overrightarrow{(e,x,W)}))$.
\end{enumerate}
If $y$ is of discontinuous type,
\begin{enumerate}
 \item if $\mathbf{B}$ is of $*$-$T$-continuous type, then $\ucf^T_{*}(\mathbf{B}) = (d, \mathbf{y})$;
\item if $\mathbf{B}$ is of $*$-$T$-discontinuous type, then $\ucf^T_{*}(\mathbf{B}) = (d, (y, X^{+}, \overrightarrow{(e,x,W)}))$, $X^{+}$ is a completion of $Y(y)$,  $X^{+}(Y_{\node}(y)) = T \otimes Q ( \pred(\pi, T \otimes Q, \ucf(Y(y))))$.
\end{enumerate}

Suppose $(\vec{Q},\overrightarrow{(d,q,P)})=(Q_i,(d_i,q_i,P_i))_{m_0 \leq i \leq m}$ is a partial level $\leq n_1$ tower and $\mathbf{B} = (b, \mathbf{y}, \pi) \in \desc(Y,T, (Q_m, \overrightarrow{(d,q,P)}) )$.  
Define $\lh(\mathbf{B}) = \lh(\mathbf{q})$. 
Define
\begin{displaymath}
  \mathbf{B} \iniseg \mathbf{B}'
\end{displaymath}
iff  $\mathbf{B} = \mathbf{B}' \res \bar{m}$ for some $\bar{m} < \lh(\mathbf{B}')$. Define $\iniseg^{Y,T} = \iniseg \res \desc(Y,T,*)$.

Suppose   $\mathbf{y} = (y,X,\overrightarrow{(e,x,W)})$, $0 < \bar{m}<m$. Then
\begin{displaymath}
  \mathbf{B} \res (Y,T,Q_{\bar{m}}) \in \desc(Y,T, (Q_{\bar{m}}, (d_i,q_i,P_i)_{m_0 \leq i \leq \bar{m}}))
\end{displaymath}
is defined by the following: letting $l$ be the least such that $\pi(e_l,x_l) \notin \desc(T, Q_{\bar{m}}, *)$, letting $\ucf(X(e_l,x_l)) = (e_{*},\mathbf{x}_{*}) $, $\mathbf{x}_{*} = x_{*}$ if $e_{*} = 1$, $\mathbf{x}_{*} = (x_{*},\dots)$ if $e_{*}>1$, and letting $\mathbf{C} \in \desc(T, Q_{\bar{m}}, *)$ be such that $\mathbf{C} = \pi(e_*,x_*) \res (T, Q_{\bar{m}})$, then
\begin{enumerate}
\item if $\mathbf{C} \neq \pi(e_*,x_*)$, then $\mathbf{B} \res (Y,T,Q_{\bar{m}}) = (\mathbf{y} \res l \concat (-1,X^{+}),  \bar{\pi})$, where $\bar{\pi}$ and $\pi$ agree on $Y_{\tree}(y \res l)$, $\bar{\pi}(e_l,x_l) = \mathbf{C}$, $\bar{\pi}$ factors $(X^{+}, T, Q_{\bar{m}})$;
\item if $\mathbf{C} = \pi(e_*,x_*)$, then $\mathbf{B} \res (Y,T,Q_{\bar{m}}) =  (\mathbf{y} \res l,  \pi \res Y_{\tree}(y \res l))$. 
\end{enumerate}

Given a $(Y,T,*)$-description $\mathbf{B} = (b, \mathbf{y},\pi)$,
define
\begin{displaymath}
  \corner{\mathbf{B}} = (b, \pi \oplus  \mathbf{y}), 
\end{displaymath}
where $\emptyset \oplus \mathbf{y} = \emptyset$ and if $\mathbf{y} = (y, X, \overrightarrow{(e,x,W)})$,  $\lh(y) = k$ then
\begin{displaymath}
  \pi \oplus \mathbf{y} =
  \begin{cases}
    (\pi(e_0,x_0), y(0),  \dots, \pi(e_{k-1},x_{k-1}), y(k-1)) & \text{if } b \text{ is even,}\\
    (y(0), \pi(e_1,x_1), y(1),\dots, \pi(e_{k-1},x_{k-1}), y(k-1)) & \text{if } b \text{ is odd.}
  \end{cases}
\end{displaymath}
Define
\begin{displaymath}
  \mathbf{B} \prec \mathbf{B}'
\end{displaymath}
iff $\corner{\mathbf{B}} <_{BK} \corner{\mathbf{B}'}$, the ordering on coordinates in $\desc(T,Q,*)$ for some $T,Q$ again according to $\prec$.  Define $\prec^{Y,T} = \prec \res \desc(Y,T,*)$. 

Define
\begin{displaymath}
  n_3 \otimes n_2 =
  \begin{cases}
    n_3  & \text{ if } n_2 \text{ is even},\\
    n_2  & \text{ if } n_2 \text{ is odd}. 
  \end{cases}
\end{displaymath}
Suppose $R$ is a level $\leq n_3 \otimes n_2$ tree, $Y$ is a level $\leq n_3$ tree and $T$ is a level $\leq n_2$ tree. Suppose $\rho : \dom(R) \cup \se{(n_3 \otimes n_2, \emptyset)} \to \desc(Y,T,*) $ is a function. $\rho$ \emph{factors} $(R,Y,T)$ iff 
  \begin{enumerate}
  \item $\rho(n_3 \otimes n_2, \emptyset)$ is the constant $(Y,T,*)$-description. 
  \item For any $(d, r ) \in \dom(R)$, $\rho(d, r) \in \desc(Y,T,\comp{d}{R}[r])$. 
  \item For any $r,r' \in \comp{1}{R}$, if $r <_{BK} r'$ then $\rho(1, r) \prec \rho(1,r')$.
  \item For any $d>1$, any $r\concat (a), r \concat (b) \in \dom(\comp{d}{R})$, if $a < _{BK} b$ and $\comp{d}{R}_{\tree}(r\concat (a))  = \comp{d}{R}_{\tree}(r \concat (b))$ then $\rho(d,r\concat (a)) \prec \rho(d,r \concat (b))$. 
  \item For any $d > 1$, any $r \in \dom(\comp{d}{R})\setminus \se{\emptyset} $, $\rho(d,r^{-}) = \rho(d,r) \res (Y,T, \comp{d}{R}_{\tree}(r^{-}))$. 
\end{enumerate}
$\rho$ \emph{factors} $(R,Y,*)$ iff $\rho$ factors $(R,Y,T')$ where $T'$ is some level $\leq n_2'$ tree, $n_2' \leq n_2$. 
If $n_2 $ is even and $Y$ is a level $\leq n_3$ tree, then
\begin{displaymath}
  \id_{Y,*}
\end{displaymath}
factors $(Y, Y, *)$ where $\id_{Y,*}(d, y) =(d,(y, X, \overrightarrow{(e,x,W)}), \id_{*, X})$ for $\comp{d}{Y}[y] = (X, \overrightarrow{(e,x,W)})$.
If $n_3 = n_2$ and $T$ is a level $\leq n_2$ tree, then
\begin{displaymath}
  \id_{*,T}
\end{displaymath}
factors $(T, Q^{(n_2)}_0, T)$, defined as follows: If $n_2 = 2$ then $\id_{*,T}$ has been defined in \cite{sharpIII}. If $n_2>2$ then $\id_{*,T}$ extends $\id_{*, \comp{\leq n_2-2}{T}}$ and for $d \in \se{n_2-1,n_2}$, $\id_{*,T}(d, t) = (n_2, \mathbf{q}^d_t, \tau^d_t)$, where $\mathbf{q}^d_t = ((-1), P^d_t, (n_2-1, ((0))))$, $\tau^d_t$ factors $(P^d_t, T, \comp{d}{T}[t])$, $\tau^d_t (n_2-1, ((0))) = (d, t, \id_{\comp{d}{T}_{\tree}(t), *})$.

Suppose $Y$ is a level $\leq n_3$ tree and $T$ is a level $\leq n_2$ tree. A \emph{representation} of $Y \otimes T$ is a pair $(R,\rho)$ such that 
 $R$ is a level $\leq n_3 \otimes n_2$ tree,
 $\rho $ factors $(R,Y,T)$, and
 $\ran(\rho) = \desc(Y,T,*)$. Representations of $Y \otimes T$ are clearly mutually isomorphic. We shall regard
 \begin{displaymath}
   Y \otimes T
 \end{displaymath}
itself as a level $\leq n_3 \otimes n_2$ tree whose level-$d$ component has domain $\set{(\mathbf{y}, \pi)}{(d, (\mathbf{y}, \pi) ) \in \desc(Y,T,*)}$ and if $d>1$ then $Y \otimes T [d, (\mathbf{y}, \pi)]$ is the unique $(Q,\overrightarrow{(d,q,P)})$ for which $(d, (\mathbf{y}, \pi)) $ is a $(Y,T, (Q,\overrightarrow{(d,q,P)}))$-description. 
Suppose $m_1,n_2,n_3$ are consecutive entries of the following list:
\begin{equation}
  \label{eq:4}
    1, 2, 2, 2, 3, 4, 4, 4, 5, 6, 6, 6, 7, 8, 8, 8, 9 \dots
\end{equation}
Then
\begin{displaymath}
  (n_3 \otimes n_2 ) \otimes m_1 = n_3 \otimes (n_2 \otimes m_1) \DEF n_3 \otimes n_2 \otimes m_1.
\end{displaymath}
If
$Q$ is a level $\leq m_1$ tree, then $(Y \otimes T) \otimes Q$ is regarded as a level $\leq n_3 \otimes n_2 \otimes m_1$ tree. There is a natural isomorphism
\begin{displaymath}
  \iota_{Y,T,Q} 
\end{displaymath}
between ``level $\leq n_3 \otimes n_2 \otimes m_1$ trees'' $(Y \otimes T) \otimes Q$ and $Y \otimes (T \otimes Q)$, defined as follows: If $n_3 \leq 2$, $\iota_{Y,T,Q}$ has been defined in \cite{sharpII}. Suppose now $n_3 > 2$. Let $m_0, m_1, n_2, n_3$ be consecutive entries in the list~(\ref{eq:4}). Then $\iota_{Y,T,Q}$ extends $\iota_{\comp{\leq n_2}{Y}, \comp{\leq m_1}{T}, \comp{\leq m_0}{Q}}$ and 
\begin{enumerate}
\item if $(n_3, \mathbf{y}, \pi) \in \desc(Y,T, (Z,\overrightarrow{(d,z,N)}))$, 
$\mathbf{A} = ( n_3 \otimes n_2,  ((\mathbf{y}, \pi) , Z,  \overrightarrow{(d,z,N)})$, $  \psi) \in \desc( Y \otimes T, Q, U)$, then $\iota_{Y,T,Q} (\mathbf{A})  =  (n_3, \mathbf{y}, \iota^{-1}_{T,Q,U} \circ ( T \otimes \psi) \circ  \pi)$; 
\item if  $(n_3, \mathbf{y}, \pi) \in \desc(Y,T, (Z,\overrightarrow{(d,z,N)}))$, $\mathbf{A} = (n_3 \otimes n_2,  ((\mathbf{y},\pi) \concat (-1), Z^{+}$, $\overrightarrow{(d,z,N)}), \psi) \in \desc(Y \otimes T, Q, U)$, $\overrightarrow{(d,z,N)} = (d_i,z_i,N_i)_{l_0 \leq i \leq l}$,  $\mathbf{y} = (y,X, (e_i,x_i,W_i)_{k_0 \leq i \leq k})$, then
  \begin{enumerate}
  \item if $y$ is of discontinuous type, then $\iota_{Y,T,Q}(\mathbf{A}) = (n_3,\mathbf{y} \concat (-1,X^{+}), \psi *_0 \pi)$, where $X^{+}$ is a completion of $\comp{n_3}{Y}(y)$, $\psi*_0 \pi$ factors $(X^{+}, T \otimes Q, U)$, $\psi *_0 \pi$ extends $ \iota^{-1}_{T,Q,U} \circ ( T \otimes \psi) \circ  \pi$, $\psi *_0 \pi (e_k , x_k) = \iota_{T,Q,U}^{-1} (n_2, \mathbf{t}_0, \tau)$, $\mathbf{t}_0 = ((-1), S_0, (b_0,s_0))$, $\tau$ factors $(S_0, Q, U)$,  $\tau(b_0,s_0) = \psi (d_l,z_l)$;
  \item if $y$ is of continuous type, then $\iota_{Y,T,Q}(\mathbf{A}) = (n_3, \mathbf{y} , \psi *_1 \pi)$, where $\psi*_1 \pi$ factors $(X, T \otimes Q, U)$,  $\psi *_1 \pi$ extends $ \iota^{-1}_{T,Q,U} \circ ( T \otimes \psi) \circ \pi \res (\dom(X) \setminus \se{(e_k,x_k)})$, $\psi *_1 \pi (e_k , x_k) = \iota_{T,Q,U}^{-1} (c, \mathbf{t} \concat (-1, S^{+}), \tau^{+})$,  where $\pi(e_k,x_k) = (c, \mathbf{t}, \tau)$, $\mathbf{t} = (t, S, (b_i,s_i,p_i)_{a_0 \leq i \leq m})$, $\tau^{+}$ factors $(S^{+},Q,U)$,  $\tau^{+}$ extends $\tau$, $\tau^{+}(b_m, s_m) = \psi (d_l,z_l)$.
  \end{enumerate}
\end{enumerate}
Inductively, we can show that $\iota_{Y,T,Q}$ is a level $\leq n_3 \otimes n_2 \otimes m_1$ tree isomorphism between $(Y \otimes T) \otimes Q$ and $Y \otimes (T \otimes Q)$. The base case $(n_3,n_2,m_1) = (2,2,1)$ is in \cite{sharpII}, whose idea is easily modified to the general case. 
$\iota_{Y,T,Q}$ justifies the associativity of the $\otimes$ operator acting on level $( \leq n_3, \leq n_2, \leq m_1)$ trees.

The identity map $\id_{Y \otimes T}$ factors $(Y \otimes T, Y, T)$.  $\rho$ factors $(R,Y,T)$ iff $\rho $ factors $(R , Y \otimes T)$. If $(d,y) \in \dom(Y)$, $\mathbf{y} = (y, X, \overrightarrow{(e,x,W)}) \in \desc(\comp{d}{Y})$, 
\begin{displaymath}
  Y \otimes_{(d,y)} T
\end{displaymath}
is the level $\leq n_3$ subtree of $Y \otimes T$ whose domain is $\dom(Y \otimes Q^0)$ plus all the $(Y,T,*)$-descriptions of the form $(d,\mathbf{y}, \tau)$. 
If $\pi$ factors level $\leq n_2$ trees $(T,Q)$, then
\begin{displaymath}
  Y \otimes \pi
\end{displaymath}
factors $(Y \otimes T, Y \otimes Q)$, where $Y \otimes \pi ( \mathbf{y}, \psi )  =  (\mathbf{y}, ( \pi \otimes U ) \circ \psi)$ for $(\mathbf{y}, \psi) \in \desc(Y, T, U)$. If $\rho$ factors level $\leq n_3$ trees $(R,Y)$, then
\begin{displaymath}
  R \otimes Y
\end{displaymath}
factors $(R \otimes T, Y \otimes T)$, where $\rho \otimes T (d, \mathbf{r}, \psi) = (d, \comp{d}{\rho}(\mathbf{r}), \psi)$.

Suppose $T$ is a proper level $\leq n_2$ subtree of $T'$, both trees are finite, $(Q_i,(d_i,q_i,P_i))_{l_0 \leq  i \leq l'}$ is a partial level $\leq n_1$ tower, $l \leq l'$, $\mathbf{B} \in \desc(Y,T, (Q_l, (d_i,q_i,P_i)_{l_0 \leq i \leq l}))$ and $\mathbf{B}' \in \desc(Y,T', (Q'_l, (d_i,q_i,P_i)_{l_0 \leq i \leq l'})) \setminus \desc(Y,T, (Q_l, {(d_i, q_i, P_i)_{l_0 \leq i \leq l}}))$. Define
\begin{displaymath}
  \mathbf{B} = \mathbf{B}' \res (Y,T)
\end{displaymath}
iff $\mathbf{B}' \prec \mathbf{B}$ and $\bigcup_{l \leq m \leq l'}\set{\mathbf{B}^{*} \in \desc(Y,T,(Q_i, (d_i,q_i,P_i))_{i \leq m})}{\mathbf{B}' \prec \mathbf{B}^{*} \prec \mathbf{B}} = \emptyset$. 
Inductively, we can show that  $\mathbf{B} = \mathbf{B}' \res (Y,T)$ iff both $\mathbf{B}, \mathbf{B}'$ are of degree $n_3$ and letting $\mathbf{B} = (n_3, (y, X, \overrightarrow{(e,x,W)}), \pi)$, $\mathbf{B}' = (n_3, (y', X', \overrightarrow{(e',x',W')}), \pi')$, $\lh(y) = k$, $\overrightarrow{(e,x,W)}= (e_i,x_i,W_i)_i$, $\ucf(X, \overrightarrow{(e,x,W)}) = (e_{*}, \mathbf{x}_{*})$, $e_{*} = 1 \to \mathbf{x}_{*} = x_{*}$, $e_{*}>1 \to \mathbf{x}_{*} =  (x_{*}, \dots)$, then either
\begin{enumerate}
\item $y$ is of continuous type, $\mathbf{y} \res k-1 = \mathbf{y}' \res k-1$, $\pi \res \comp{n_3}{Y}_{\tree}(y^{-}) \subseteq \pi'$, $\pi(b_{k-1},p_{k-1}) =  \pi'(b_{k-1}, p_{k-1}) \res (T,Q_l)$, or
\item $y$ is of discontinuous type, $\mathbf{B} \iniseg \mathbf{B}'$, $\pi( e_{*},x_{*})=\pi(b_k,p_k) \res (T,Q_l)$.
\end{enumerate}

Suppose $Y$ is a proper level $\leq n_3$ subtree of $Y'$, both finite. Suppose $T$ is a level $\leq n_2$ tree.  For $\mathbf{B} \in \desc(Y,T,*)$, $\mathbf{B}' \in \desc(Y',T,*)$, define 
\begin{displaymath}
  \mathbf{B} = \mathbf{B}' \res (Y,T)
\end{displaymath}
iff $\mathbf{B}' \prec \mathbf{B}$ and $\set{\mathbf{B}^{*} \in \desc(Y,T,*)}{\mathbf{B}' \prec \mathbf{B}^{*} \prec \mathbf{B}} = \emptyset$. Putting $\mathbf{B} = (d, \mathbf{y}, \pi)$, $\mathbf{B}' = (d',\mathbf{y}', \pi')$, $\mathbf{y} = (y,X,\overrightarrow{(e,x,W)})$, $\mathbf{y}' = (y', X',\overrightarrow{(e',x',W')})$, $\lh(y)  =k$,  $\overrightarrow{(e,x,W)} = (e_i,x_i,W_i)_{k_0 \leq i \leq k}$, $\overrightarrow{(e',x',W')} = (e'_i,x'_i,W'_i)_{k'_0 \leq i \leq k'}$, $\ucf(X, \overrightarrow{(e,x,W)}) = (e_{*}, \mathbf{x}_{*})$, $e_{*} = 1 \to \mathbf{x}_{*} = x_{*}$, $e_{*}>1 \to \mathbf{x}_{*} =  (x_{*}, \dots)$,  inductively, we can show that $\mathbf{B} =\mathbf{B}' \res (Y,T)$ iff one of the following holds:
\begin{enumerate}
\item $d,d' \leq n_{*}<n_3$, $n_{*}$ is even, $\mathbf{B} = \mathbf{B}'\res  (\comp{\leq n_{*}}{Y}, \comp{\leq n_{*}}{T}) $.
\item $n_3-1\leq d = d' = n_3$, $\mathbf{B} \in \desc(Y,T,Q)$ is of continuous type, $\mathbf{y} \res k-1 = \mathbf{y}' \res k-1$, $\pi \res \comp{d}{Y}_{\tree}(y^{-}) \subseteq \pi'$, $\comp{d}{Y}\se{y'\res k,+, X'_{k}} = \se{y^{-}}$, either $e_{k-1}= 0$ or $\pi'(e_{k-1},x_{k-1})   = \pi(e_{k-1},x_{k-1}) \res (T,Q)$.
\item $n_3-1\leq d = d' = n_3$, $\mathbf{B} \in \desc(Y,T,Q)$ is of discontinuous type, $\mathbf{y} = \mathbf{y}' \res k$, $\pi  \subseteq \pi'$, $\comp{d}{Y}\se{y'\res k+1, +, X'_{k+1}} = \se{y}$,  $\pi'(e_{k},x_{k})=\pi(e_{*},x_{*}) \res (T,Q)$.
\item $d'=n_3-1$, $d=n_3$, $n_3$ is even, $\emptyset = y(0) \res  \comp{d}{Y}\se{\emptyset}$, $(\pi,T \otimes Q)$ is continuous at $(e_0,x_0)$. 
\end{enumerate}

If $n_3$ is odd, $R,Y$ are level-$n_3$ trees, $T$ is a level $\leq n_2$ tree, then $\rho$ factors $(R,Y,T)$ iff $\rho$ extends to some $\rho'$ which factors $(Q^{(n_2)}_0 \oplus R, Q^{(n_2)}_0 \oplus Y, T)$.

\section{The induction hypotheses}
\label{sec:induction-hypothesis}

From now on until the end of this paper, we assume $\boldDelta{2n}$-determinacy, where $n <\omega$.  This section lists the inductive definitions and hypotheses for any $ 0 \leq m <  n$. The base of the induction is in \cite{sharpIII}. Define $E(0) = 1$, $E(k+1) = \omega^{E(i)}$ in ordinal exponentiation. Define $u^{(1)}_i = u_i$ for $i \leq \omega$. 

$\nu_{2m+1}$ denotes the $\mathbb{L}[T_{2m+1}]$-club filter on $\bolddelta{2m+1}$, i.e., $A \in \nu_{2m+1}$ iff $A \in \mathbb{L}[T_{2m+1}]$ and there is a club $C \subseteq \bolddelta{2m+1}$ such that $C \in \mathbb{L}[T_{2m+1}]$ and $C \subseteq A$.

Assume by induction that: 
\begin{enumerate}[label=(\arabic{*}:$m$), ref=\arabic{*}, series=ind]
\item \label{item:subset_does_not_increase_0}
If $m>0$ and $A \subseteq u^{(2m-1)}_{E(2m-1)}$, then $A \in \mathbb{L}_{\bolddelta{2m+1}}[T_{2m}]$ iff $A \in \mathbb{L}[T_{2m+1}]$.
\end{enumerate}

(\ref{item:subset_does_not_increase_0}:$m$) ensures that the $\mathbb{L}_{\bolddelta{2m+1}}[T_{2m}]$-measures induced by level $\leq 2n$ trees are indeed $\mathbb{L}[T_{2m+1}]$-measures. 
 Assume we have defined by induction the level-($2m+1$) sharp operator  $x \mapsto x^{(2m+1)\#}$ for $x \in \mathbb{R}$ with the following property:
 \begin{enumerate}[resume*=ind]
 \item\label{item:odd_sharp} $x^{(2m+1)\#} $ is many-one equivalent to $M_{2m}^{\#}(x)$, the many-one reductions being independent of $x$.
 \end{enumerate}

Define
\begin{displaymath}
  A \in \mu^P
\end{displaymath}
iff $A \subseteq [(\bolddelta{2i+1})_{i \leq m}]^{P \uparrow}$, $A \in \mathbb{L}[T_{2m+1}]$ and there is $\vec{C} = (C_i)_{i \leq m} \in \prod_{i \leq m} \nu_{2i+1}$ such that  $[\vec{C}]^{P \uparrow }\subseteq A$. If $W$ is a finite level-($2m+1$) tree, let $A \in \mu^W$ iff $[(\bolddelta{2i+1})_{i < m}]^{Q^{(2m)}_0\uparrow} \oplus_{2m+1} A  \in \mu^{Q^{(2m)}_0 \oplus W}$.

We assume by induction that: 
\begin{enumerate}[resume*=ind]
\item\label{item:muP_is_measure} Suppose $P$ is a finite level $\leq 2m+1$ tree. Then
  \begin{enumerate}
  \item $\mu^P$ and $\mu^{\comp{2m+1}{P}}$ are both $\mathbb{L}[T_{2m+1}]$-measures.
  \item If $m>0$, then $\mu^P$ is the product $\mathbb{L}[T_{2m+1}]$-measure of $\mu^{\comp{\leq 2m}{P}}$ and $\mu^{\comp{2m+1}{P}}$, i.e., $A \in \mu^P$ iff there exist $B \in \mu^{\comp{\leq 2m}{P}}$ and $C \in \mu^{\comp{2m+1}{P}}$ such that $B \oplus_{2m+1} C \subseteq A$, where $B \oplus_{2m+1} C = \set{\vec{\alpha} \oplus_{2m+1} \vec{\beta}}{\vec{\alpha} \in B, \vec{\beta} \in C}$, $\vec{\alpha} \oplus_{2m+1}\vec{\beta} = \vec{\gamma}$ where $\comp{d}{\gamma}_p = \comp{d}{\alpha}_p$ for $(d, p ) \in \dom(P)$, $\comp{2m+1}{\gamma}_p = \beta_p$ for $(2m+1, p) \in \dom(P)$.
  \item The set of $\mathbb{L}[j^P(T_{2m+1})]$-cardinals in the interval $[\bolddelta{2m+1}, j^P(\bolddelta{2m+1})]$ is the closure of $\set{\seed^P_{(2m+1, \mathbf{A})}}{\mathbf{A} \in \exexdesc(\comp{2m+1}{P})}$.
  \end{enumerate}
\end{enumerate}
If $P$ is a finite level $\leq 2m+1$ tree or level-($2m+1$) tree,
Let
\begin{displaymath}
  j^P = j^{\mu^P}_{\mathbb{L}[T_{2m+1}]}: \mathbb{L}[T_{2m+1}] \to \mathbb{L}[j^P(T_{2m+1})]
\end{displaymath}
be the induced restricted ultrapower map. For any real $x$, $j^P$ is elementary from $L[T_{2m+1},x]$ to $L[j^P(T_{2m+1}),x]$. If $P$ is a subtree of $P'$, both finite, then $j^{P,P'}$ is the factor map from $\mathbb{L}[j^P(T_{2m+1})]$ to $\mathbb{L}[j^{P'}(T_{2m+1})]$. If $\sigma$ factors finite level $\leq 2m+1$ trees $(P,W)$, then $\sigma^W$ is the induced factor map from $\mathbb{L}[j^P(T_{2m+1})]$ to $\mathbb{L}[j^W(T_{2m+1})]$, i.e., $\sigma^W( [h]_{\mu^P} ) = [h^{*}]_{\mu^W}$, where $h^{*}(\vec{\alpha}) = h(\vec{\alpha}_{\sigma})$. 
By (\ref{item:muP_is_measure}:$m$)(\ref{item:ultrapower_bound}:$m-1$), $j^P \res \bolddelta{2m+1} = j^{\comp{\leq 2m}{P}} \res \bolddelta{2m+1}$, and similarly for $j^{P,P'}, \sigma^W$. Define $j^P_{\sup} (\alpha) = \sup (j^P)''\alpha$, and similarly for $j^{P,P'}_{\sup}$, $\sigma^W_{\sup}$. 
Assume by induction that:
\begin{enumerate}[resume*=ind]
\item \label{item:odd_wellfounded} Suppose $(P_i)_{i < \omega}$ is a level-($2m+1$) tower and $P_{\omega} = \cup_{i<\omega} P_i$. Then $P_{\omega}$ is $\Pi^1_{2n+1}$-wellfounded iff the direct limit of $(j^{P_i,P_{i'}})_{i\leq i < \omega}$ is wellfounded.
\end{enumerate}

Suppose $\mathbf{A} \in \exexdesc(\comp{2m+1}{P})$. Then
\begin{displaymath}
  \seed^P_{(2m+1,\mathbf{A})}
\end{displaymath}
is represented modulo $\mu^P$ by the function $\vec{\alpha} \mapsto \comp{2m+1}{\alpha}_{\mathbf{A}}$. Similarly define $\seed^P_{(d,\mathbf{p})}$ for $(d,\mathbf{p}) \in \exdesc(P)$ and $\seed^P_{(d, p)}$ for $(d,p) \in \dom(P)$.

If $P$ is an infinite $\Pi^1_{2m+1}$-wellfounded level $\leq 2m+1$ tree, $j^P$ is the direct limit map and $\seed^P_{(2m+1, \mathbf{A})}$, $\seed^P_{(2m+1,\mathbf{r})}$, $\seed^P_{(2m+1,{r})}$ are the images under appropriate tails of the direct limit map. 

Suppose $m>0$, $W$ is a finite level $\leq 2m+1$ tree, $Z$ is a finite level $\leq 2m$ tree and $\mathbf{B} \in \desc(W,Z,*)$. Then
\begin{displaymath}
  \seed^{W,Z}_{\mathbf{B}}
\end{displaymath}
is the element represented modulo $\mu^W$ by $\id^{W,Z}_{\mathbf{B}}$. 

Suppose $P$ is another level $\leq 2m+1$ tree and $\sigma$ factors $(P,W,Z)$. Let
\begin{displaymath}
  \seed^{W,Z}_{\sigma} = [\id^{W,Z}_{\sigma}]_{\mu^W}.
\end{displaymath}
So $\seed^{W,Z}_{\sigma} = (\seed^{W,Z}_{\sigma(d,w)})_{(d,w) \in \dom(W)}$.
 If $W,Z$ are (possibly infinite) $\Pi^1_{2m+1}$-wellfounded trees, $\seed^{W,Z}_{\mathbf{B}}$ and $\seed^{W,Z}_{\sigma}$ make sense as the images under appropriate tails of direct limit maps.

We assume by induction that:
\begin{enumerate}[resume*=ind]
\item \label{item:PW_factor}Suppose $P,W$ are finite level $\leq 2m+1$ trees, $Z$ is a finite level $\leq 2m$ tree and $\sigma$ factors $(P,W,Z)$. Then for any $A \in \mu^P$, $\seed^{W,Z}_{\sigma} \in j^W \circ j^Z(A)$.
\end{enumerate}

We can then define
\begin{displaymath}
  \sigma^{W,Z} : \mathbb{L}[j^P(T_{2m+1})] \to \mathbb{L}[j^W\circ j^Z(T_{2m+1})]
\end{displaymath}
by sending $j^P(f)(\seed^P)$ to $j^W \circ j^Z (f)(\seed^{W,Z}_{\sigma})$.

We assume by induction that:
\begin{enumerate}[resume*=ind]
\item \label{item:WZ_tensor_product} Suppose $W$ is a finite level $\leq 2m+1$ tree and $Z$ is a finite level $\leq 2m$ tree. Then $(\id_{W \otimes Z})^{W,Z}$ is the identity on $j^W \circ j^Z(\bolddelta{2m+1})+1$.
\item   \label{item:cardinals}
Suppose $P$ is a finite level $\leq 2m+1$ tree. Then the set of $\mathbb{L}[j^P(T_{2m+1})]$-regular cardinals in the interval $[\bolddelta{2m+1}, j^P(\bolddelta{2m+1})]$ is $\set{\seed^P_{\mathbf{r}}}{\mathbf{r} \in \exdesc(P) \text{ is regular}}$.
\end{enumerate}

By (\ref{item:subset_does_not_increase_0}:$k$) for $k \leq m$ and (\ref{item:subset_does_not_increase}:$k$)(\ref{item:strong_partition}:$k$)(\ref{item:ultrapower_bound}:$k$)(\ref{item:regular_cardinals_Q}:$k$) for $k < m$, if $P$ is a finite level $\leq 2m+1$ tree, then the set of  uncountable $\mathbb{L}[j^P(T_{2m+1})]$-cardinals below $j^P(\bolddelta{2m+1})$ is the closure of
  \begin{displaymath}
    \set{u^{(2k+1)}_{\xi}}{k < m, 0 < \xi \leq E(2k+1)} \cup \set{\seed^P_{(2m+1,\mathbf{A})}}{\mathbf{A} \in \exexdesc(\comp{\leq 2m+1}{P})},
  \end{displaymath}
and the set of uncountable $\mathbb{L}[j^P(T_{2m+1})]$-regular cardinals is
  \begin{displaymath}
    \set{\seed^P_{(d,\mathbf{p})}}{(d,\mathbf{p}) \in \exdesc(P) \text{ is regular}}.
  \end{displaymath}

The level-($2m+1$) uniform indiscernibles are defined:
\begin{enumerate}
\item $u^{(2m+1)}_{\xi+1} = j^P(\bolddelta{2m+1})$ when $\xi < E(2m+1)$, $P$ is a $\Pi^1_{2m+1}$-wellfounded level $\leq 2m+1$ tree and $\llbracket \emptyset \rrbracket_{\comp{2m+1}{R}} = \widehat{\xi}$. 
\item If $0 < \xi  \leq E(2m+1)$ is a limit, then $u^{(2m+1)}_{\xi} = \sup _{\eta<\xi}u^{(2m+1)}_{\eta}$.
\end{enumerate}

A level-1 sharp code is a usual sharp code for an ordinal below $u_{\omega}$. 
If $m>0$, a \emph{level-$(2m+1)$ sharp code} is a pair $\corner{ \gcode{\tau}, x^{(2m+1)\#}}$ where $\tau$ is an $\mathcal{L}^{{\underline{x}},R^{(2m+1)}_{E(2m+1)}}$-Skolem term for an ordinal without free variables. For $0 < \xi \leq E(2m+1)$, $\WO^{(2m+1)}_{\xi}$ is the set of level-($2m+1$) sharp codes $\corner{ \gcode{\tau}, x^{(2m+1)\#}}$ such that $\tau$ is an $\mathcal{L}^{\underline{x},R^{(2m+1)} _{\xi}}$-Skolem term. 
The ordinal coded by $\corner{ \gcode{\tau}, x^{(2m+1)\#}}$ is
\begin{displaymath}
  \sharpcode{\corner{\gcode{\tau}, x^{(2m+1)\#}}} = \tau^{(j^{R^{(2m+1)}_{E(2m+1)}}(M^{-}_{2m,\infty}(x)); \seed^{R^{(2m+1)}_{E(2m+1)}})}.
\end{displaymath} 
Assume by induction that:
\begin{enumerate}[resume*=ind]
\item \label{item:sharp_code_complexity}
$\WO^{(2m+1)}_{\xi}$ is $\Pi^1_{2m+2}$ for $0 < \xi \leq E(2m+1)$, uniformly in $\xi$. The following relations are all $\Delta^1_{2m+1}$:
\begin{enumerate}
\item   $v,w \in \WO^{(2m+1)}_{E(2m+1)} \wedge \sharpcode{v} = \sharpcode{w}$.
\item   $v,w \in \WO^{(2m+1)}_{E(2m+1)} \wedge \sharpcode{v} < \sharpcode{w}$.
\item   $k<m \wedge v \in \WO^{(2m+1)}_{E(2m+1)} \wedge w \in \WO^{(2k+1)}_{E(2k+1)} \wedge \sharpcode{v} = \sharpcode{w}$.
\end{enumerate}
\end{enumerate}

If $\Gamma$ is a pointclass, say that $A \subseteq  u^{(2m+1)}_{E(2m+1)} \times  \mathbb{R} $ is in $\Gamma$ iff $\set{(v, x)}{v \in \WO^{(2m+1)}_{E(2m+1)}  \wedge (\sharpcode{v},x) \in A}$ is in $\Gamma$. $\Gamma $ acting on subsets of product spaces is defined in the obvious way.
Assume that we have constructed by induction the level-($2m+2$) Martin-Solovay tree $T_{2m+2}$ on $2 \times u^{(2m+1)}_{E(2m+1)}$. The construction should ensure that $T_{2m+2}$ is $\Delta^1_{2m+3}$ in the codes and projects to $\set{x^{(2m+1)\#}}{x \in \mathbb{R}}$. Let $\kappa_{2m+3}^x$ be the least $(T_{2m+2},x)$-admissible ordinal. $\kappa_{2m+3} = \kappa_{2m+3}^0$. 

Suppose $W$ is a finite level $\leq 2m+1$ tree, $\overrightarrow{(d,w)} = (d_i,w_i)_{i < k}$ is a distinct enumeration of a subset of $\dom(W)$. Suppose $f : [(\bolddelta{2i+1})_{i \leq m}]^{W \uparrow} \to \bolddelta{2m+1}$ is a function which lies in $\mathbb{L}[T_{2m+1}]$. The \emph{signature} of $f$ is $\overrightarrow{(d,w)}$ iff there is $\vec{C}  = (C_i)_{i \leq m}$ such that $C_i \in \nu_{2i+1}$ for any $i$ and 
\begin{enumerate}
\item for any $\vec{\alpha}, \vec{\beta} \in [\vec{C}]^{W \uparrow}$, if $(\comp{d_0}{\alpha}_{w_0},\ldots,\comp{d_{k-1}}{\alpha}_{w_{k-1}}) <_{BK} (\comp{d_0}{\beta}_{w_0}, \ldots,\comp{d_{k-1}}{\beta}_{w_{k-1}})$ then $f(\vec{\alpha}) < f(\vec{\beta})$;
\item for any $\vec{\alpha}, \vec{\beta} \in [\vec{C}]^{W \uparrow}$, if  $(\comp{d_0}{\alpha}_{w_0},\ldots,\comp{d_{k-1}}{\alpha}_{w_{k-1}}) = (\comp{d_0}{\beta}_{w_0}, \ldots,\comp{d_{k-1}}{\beta}_{w_{k-1}})$ then $f(\vec{\alpha}) = f(\vec{\beta})$.
\end{enumerate}

Suppose the signature of $f$ is  $\overrightarrow{(d,w)}= (d_i,w_i)_{i < k}$ and $k>0$, $d_0 = 2m+1$. 
$f$ is \emph{essentially continuous} iff for $\mu^W$-a.e.\ $\vec{\alpha}$, $f(\vec{\alpha}) = \sup \set{f(\vec{\beta})}{ (\comp{d_0}{\beta}_{w_0},\ldots,\comp{d_{k-1}}{\beta}_{w_{k-1}}) < (\comp{d_0}{\alpha}_{w_0},\ldots,\comp{d_{k-1}}{\alpha}_{w_{k-1}}) }$.
Otherwise, $f$ is \emph{essentially discontinuous}. 
Put $[\vec{B}]^{W \uparrow (0,-1)} = [\vec{B}]^{W \uparrow} \times \omega$. For $(d^{*},\mathbf{w}^{*}) \in \exdesc(W)$ regular, put $[\vec{B}]^{W \uparrow (d^{*},\mathbf{w}^{*})} =  \set{(\vec{\beta},\gamma)}{\vec{\beta} \in [\vec{B}]^{W \uparrow} , \gamma < \comp{d^{*}}{\beta}_{\mathbf{w}^{*}}}$.  
Say that the \emph{uniform cofinality} of $f$ is $(d^{*},\mathbf{w}^{*})$ iff 
 there is  $g : [(\bolddelta{2i+1})_{i \leq m}]^{W\uparrow (d^{*},\mathbf{w}^{*})} \to \bolddelta{2m+1}$ such that $g \in \mathbb{L}[T_{2m+1}]$ and for $\mu^W$-a.e.\ $\vec{\alpha}$,  $f(\vec{\alpha}) = \sup \{g(\vec{\alpha}, \beta) :  (\vec{\alpha}, \beta) \in [(\bolddelta{2i+1})_{ i \leq m}]^{W \uparrow v}\}$ and the function $\beta \mapsto g(\vec{\alpha}, \beta)$ is order preserving. 
Let $(P_i,(d_i, p_i,R_i))_{i < k} \concat (P_{k})$ be the partial level $\leq 2m+1$ tower of continuous type and let $\sigma$ factor $(P_{k}, W)$ such that $\sigma(d_i,p_i) = (d_i,w_i)$ for each $i < k$. 
Note that $(d_i,w_i) \prec^W (d_0,w_0)$ for $0 < i < k$, so each $P_i$ is indeed a regular level $\leq 2m+1$ tree.  
$\vec{P}= (P_i)_{i \leq m}$ is called the \emph{level $\leq2m+1$ tower induced by $f$}, and $\sigma$ is called the \emph{factoring map induced by $f$}. Note that $\sigma \res P_i$ factors $(P_i,W)$ for each $i$. The \emph{potential partial level $\leq 2m+1$ tower} induced by $f$ is
\begin{enumerate}
\item $(P_k, (d_i, p_i,R_i)_{i < k})$, if $f$ is essentially continuous;
\item $(P_k, (d_i,p_i,R_i)_{i < k} \concat (0,-1,\emptyset))$, if $f$ is essentially discontinuous and has uniform cofinality $(0,-1)$;
\item $(P_k, (d_i,p_i,R_i)_{i < k} \concat (d^{+}, x^{+}, R^{+}))$, if $f$ is essentially discontinuous and has uniform cofinality $ (d_{*}, \mathbf{w}_{*})$, $d_{*}>0$,  $(P_k, (d^{+},x^{+},R^{+}))$ is a partial level $\leq 2m+1$ tree with uniform cofinality $ (d_{*}, \mathbf{p}_{*})$,  $\comp{d_{*}}{\sigma}(\mathbf{p}_{*}) =  \mathbf{w}_{*}$.
\end{enumerate}
The \emph{approximation sequence} of $f$ is $( f_i)_{i \leq  k}$ where $\dom(f_i) = [(\bolddelta{2i+1})_{i \leq m}]^{P_i \uparrow}$, $f_0$ is the constant function with value $\bolddelta{2m+1}$,  $f_i(\vec{\alpha}) = \sup \set{f(\vec{\beta})}{\vec{\beta} \in [\omega_1]^{W \uparrow}, (\comp{d_0}{\beta}_{w_0},\ldots,\comp{d_{i-1}}{\beta}_{w_{i-1}})= (\comp{d_0}{\alpha}_{p_0},\ldots,\comp{d_{i-1}}{\alpha}_{p_{i-1}})}$ for $1 \leq i  \leq k$. In particular,  $f_k(\vec{\beta}_{\sigma}) = f(\vec{\beta})$ for $\mu^W$-a.e.\ $\vec{\beta}$.

Suppose $\bolddelta{2m+1} \leq \beta = [f]_{\mu^W}< j^W(\bolddelta{2m+1})$. Suppose the signature of $f$ is $(d_i,w_i)_{i < k}$, the approximation sequence of $f$ is $(f_i)_{ i \leq  k}$, the level $\leq 2m+1$ tower induced by $f$ is $(P_i)_{i \leq k}$, the factoring map induced by $f$ is $\sigma$.  
Then the \emph{$W$-signature} of $\beta$ is $({d_i,w_i})_{i < k}$, the \emph{$W$-approximation sequence} of $\beta$ is  $( [f_i]_{\mu^{P_i}})_{i \leq k}$,  $\beta$ is \emph{$W$-essentially continuous} iff $f$ is essentially continuous. The \emph{$W$-uniform cofinality} of $\beta$ is $\omega$ if $f$ has uniform cofinality $(0,-1)$, $\seed^W_{(d_{*},\mathbf{w}_{*})}$ if $f$ has uniform cofinality $(d_{*},\mathbf{w}_{*})$. 
The \emph{$W$-(potential) partial level $\leq 2m+1$ tower and $W$-factoring map induced by $\beta$} are the (potential) partial level $\leq 2m+1$ tower and factoring map induced by $f$ respectively. Assume by induction that:
\begin{enumerate}[resume*=ind]
\item \label{item:signature_Q}The $W$-partial level $\leq 2m+1$ tower induced by $\beta$ and the $W$ approximation sequence of $\beta$ are uniformly $\Delta^1_{2m+3}$ definable from $(W,\beta)$.
\end{enumerate}

Suppose $Q$ is a level-($ 2m+2$) tree. The \emph{ordinal representation} of $Q$ is the set
\begin{align*}
  \rep(Q) =& \set{\vec{\beta} \oplus_Q q}{q \in \dom(Q), \vec{\beta} \text{ respects } Q_{\tree}(q)} \\
 & \cup \set{\vec{\beta} \oplus_Q q \concat(-1)}{q \in \dom(Q), \vec{\beta} \text{ respects }Q(q)}. 
\end{align*}
Here for $q \in \bardom(Q)$ of length $k$, $\vec{\beta} \oplus_Q q = (\beta_{Q_{\node}(q \res 0)}, q(0), \dots, \beta_{Q_{\node}(q)}, q(k-1))$.
$\rep(Q)$ is endowed with the $<_{BK}$ ordering
\begin{displaymath}
  <^Q = <_{BK} \res \rep(Q).
\end{displaymath}
Assume by induction that:
\begin{enumerate}[resume*=ind]
\item \label{item:rep_Q} Suppose $Q$ is a level-($2m+2$) tree. Then $Q$ is $\Pi^1_{2m+2}$-wellfounded iff $<^Q$ is a wellordering.
\end{enumerate}

Suppose $B \in \mathbb{L}[T_{2m+1}]$. Define 
\begin{displaymath}
  f \in {B}^{Q \uparrow}
\end{displaymath}
iff $f \in \mathbb{L}[T_{2m+1}]$ is an order preserving function from $\rep(Q)$ to $ B$. If $f \in [\bolddelta{2m+1}]^{Q \uparrow}$, then for any $q \in \dom(Q)$,  ${f}_q$ is a function on $(\bolddelta{2m+1})^{Q_{\tree}(q) \uparrow}$ that sends $\vec{\alpha}$ to $f(\vec{\alpha} \oplus_Q q)$, and $f$ represents a tuple of ordinals
\begin{displaymath}
  [f]^Q = ([f]^Q_q)_{q \in \dom(P)}
\end{displaymath}
where $[f]^Q_q = [f_q]_{\mu^{Q_{\tree}(q)}}$ for $q \in \dom(Q)$. Let
\begin{displaymath}
  [B]^{Q \uparrow} = \set{ [f]^Q}{ f \in B^{Q \uparrow}}.
\end{displaymath}
A tuple of ordinals $\vec{\beta} = (\beta_q)_{q \in \dom(Q)}$ \emph{respects} $Q$ iff $\vec{\beta} \in [\bolddelta{2m+1}]^{Q \uparrow}$. $\vec{\beta}$ \emph{weakly respects $Q$} iff $\beta_{\emptyset} = \bolddelta{2m+1}$ and for any $p,p' \in \dom(Q)$, if $p$ is a proper initial segment of $p'$, then $j^{Q_{\tree}(p), Q_{\tree}(p')} (\beta_q) > \beta_{q'}$.

Suppose now $Q$ is a finite level $\leq 2m+2$ tree. Then $\rep(Q) = \cup_d \se{d} \times \rep(\comp{d}{Q})$. 
Suppose $\vec{B} = (B_i)_{i \leq m+1} \in \mathbb{L}_{\bolddelta{2m+1}}[T_{2m+2}]$. Define $  f \in \vec{B}^{Q \uparrow}$
iff $\mathbb{L}_{\bolddelta{2m+3}}[T_{2m+2}]$ is an order preserving function from $\rep(Q)$ to $\cup_i B_i$ such that for any $i$, $\ran(\comp{i}{f}) \subseteq B_i$. Define $  [\vec{B}]^{Q \uparrow} = \set{ [f]^Q}{ f \in \vec{B}^{Q \uparrow}}$. $\vec{\beta} = (\comp{d}{\beta}_q)_{q \in \dom(Q)}$ \emph{respects} $Q$ iff $\vec{\beta} \in [(\bolddelta{2i+1})_{i \leq m}]^{Q \uparrow}$. If $f \in ((\bolddelta{2i+1})_{i \leq m})^{Q \uparrow}$ and $\mathbf{q}=(d,(q,P,\dots) )\in \exdesc(Q)$, $d>1$, then $\comp{d}{f}_{\mathbf{q}}$ is the function on $[(\bolddelta{2i+1})_{i \leq m}]^{P \uparrow}$ defined as follows: $\comp{d}{f}_{\mathbf{q}} = \comp{d}{f}_q$ if $(d,\mathbf{q})  \in  \desc( Q)$; $\comp{d}{f}_{\mathbf{q}} (\vec{\alpha}) =  \comp{d}{f}_{q}(\vec{\alpha} \res \comp{d}{Q}_{\tree}(q))$ if $(d, \mathbf{q}) \notin \desc(Q)$.   
If $\vec{\beta} =  (\beta_{(d,q)})_{(d,q) \in \dom(Q)} = (\comp{d}{\beta}_q)_{(d,q) \in \dom(Q)} \in [(\bolddelta{2i+1})_{i \leq m}]^{Q \uparrow}$, we define $\beta_{(d,\mathbf{q})} = \comp{d}{\beta}_{\mathbf{q}}$ for  $(d,\mathbf{q}) \in \exdesc(Q) $: if $d>1$, $\mathbf{q} = (q,P,\dots)$, put 
$\comp{d}{\beta}_{\mathbf{q}} = [\comp{d}{f}_{\mathbf{q}}]_{\mu^P}$ where $\vec{\beta} = [f]^Q$. 
Clearly, $\comp{d}{\beta}_{\mathbf{q}} = \comp{d}{\beta}_{q}$ if $(d,\mathbf{q}) \in \desc(Q)$ of discontinuous type,  $\comp{d}{\beta}_{\mathbf{q}} = j^{\comp{d}{Q}_{\tree}(q),P}(\comp{d}{\beta}_{q})$ if $(d,\mathbf{q}) \notin \desc(Q)$.  The next induction hypothesis computes the remaining case when $\mathbf{q}\in \desc(Q)$ is  of continuous type, justifying that  $\comp{d}{\beta}_{\mathbf{q}}$ does not depend on the choice of $f$. 

\begin{enumerate}[resume*=ind]
\item \label{item:Q_ordinal_cont_type}Suppose $Q$ is a level $\leq 2m+2$ tree. 
Suppose $\vec{\beta} = (\comp{d}{\beta}_q)_{(d,q) \in \dom(Q)} \in [(\bolddelta{2i+1})_{i \leq m}]^{Q \uparrow}$, $(d,\mathbf{q})=(d,(q,P,\dots)) \in \desc({Q})$ is of continuous type, $P^{-} = Q_{\tree}(q^{-})$,  then $\comp{d}{\beta}_{\mathbf{q}} =j^{P^{-},P}_{\sup}(\comp{d}{\beta}_{q^{-}})$.
\end{enumerate}

Suppose by induction that:
\begin{enumerate}[resume*=ind]
\item \label{item:q_respecting}
  Suppose that $Q$ is a finite level $\leq 2m+2$ tree and $\vec{\beta}= (\comp{d}{\beta}_q)_{(d,q)\in \dom(Q)}$ is a tuple of ordinals in $u^{(2m+1)}_{E(2m+1)}$. Then $\vec{\beta}$ respects $Q$ iff all of the following holds:
  \begin{enumerate}
  \item $\comp{\leq 2m+1}{\vec{\beta}}$ respects $\comp{\leq 2m+1}{Q}$, where  $\comp{\leq 2m+1}{\vec{\beta}} = \vec{\beta} \res \dom(\comp{\leq 2m+1}{Q})$. 
  \item For any $q \in \dom(\comp{2m+2}{Q})$, the $\comp{2m+2}{Q}_{\tree}(q)$-potential partial level $\leq 1$ tower induced by $\beta_q$ is $\comp{2m+2}{Q}[q]$, and the  $\comp{2m+2}{Q}_{\tree}(q)$-approximation sequence of $\comp{2m+2}{\beta}_q$ is  $(\comp{2m+2}{\beta}_{q \res l})_{  l \leq \lh(q)}$.
    \item If  $\comp{2m+2}{Q} _{\tree}(q \concat (a)) = \comp{2m+2}{Q} _{\tree}(q \concat (b)) $ and $a<_{BK}b$ then $\comp{2m+2}{\beta}_{q \concat (a)} < \comp{2m+2}{\beta}_{q \concat (b)}$.
  \end{enumerate}
Moreover, if $\vec{C} = (C_i)_{i \leq m}  \in  \prod_{i \leq m}\nu_{2i+1}$ is a sequence of clubs, then $\vec{\beta} \in [\vec{C}]^{Q\uparrow}$ iff $\vec{\beta}$ respects $Q$, $\comp{\leq 2m+1}{\beta} \in [\vec{C}]^{\comp{\leq 2m+1}{Q}\uparrow}$, and letting $C'$ be the set of limit points of $C$, then  $\comp{2m+2}{\beta}_q \in j^{\comp{2m+2}{Q}_{\tree}(q)} (C')$ for each $q \in \dom(\comp{2m+2}{Q})$.
\end{enumerate}

We assume by induction the level-($2m+2$) Becker-Kechris-Martin theorem:
\begin{enumerate}[resume*=ind]
\item \label{item:bk_km} For each $A \subseteq u^{(2m+1)}_{E(2m+1)} \times \mathbb{R}$, the following are equivalent.
  \begin{enumerate}
  \item $A$ is $\Pi^1_{2m+3}$.
  \item There is a $\Sigma_1$ formula $\varphi$ such that $(\alpha,x) \in A$ iff $L_{\kappa_{2m+3}^x}[T_{2m+2},x]\models \varphi(T_{2m+2},\alpha,x)$.
  \end{enumerate}
\end{enumerate}

 If $\beta < u^{(2m+1)}_{E(2m+1)} $,  $A \subseteq \mathbb{R}$ is $\beta\text{-}\Pi^1_{2m+3}(x)$ iff there is a $\Pi^1_{2m+3}(x)$ set $B \subseteq u^{(2m+1)}_{E(2m+1)} \times \mathbb{R}$ such that $A = \Diff B$. $\beta$-$\Pi^1_{2m+1}(x)$ acting on product spaces of $\omega$ and $\mathbb{R}$ is defined in the obvious way. Lightface $\beta$-$\Pi^1_{2m+1}$ and boldface $\beta$-$\boldpi{2m+1}$ have the obvious meanings. 

Define
\begin{align*}
      \mathcal{O}^{T_{2m+2},x} & = \{ (\gcode{\varphi},\alpha) : \varphi \text{ is a $\Sigma_1$-formula}, \alpha<u^{(2m+1)}_{E(2m+1)},\\
& \qquad L_{\kappa^x_{2m+3}}[T_{2m+2},x] \models \varphi(T_{2m+2},x,\alpha)\},\\
  x^{{(2m+2)}\#}_{\xi} &= \{(\gcode{\varphi},\gcode{\psi}) :  \exists \alpha<u^{(2m+1)}_{\xi} ( (\gcode{\varphi},\alpha) \notin \mathcal{O}^{T_{2m+2},x}\wedge\\
& \qquad\forall \eta<\alpha (\gcode{\psi},\eta) \in \mathcal{O}^{T_{2m+2},x} )\},\\
  x^{{(2m+2)}\#} &= \{(k,\gcode{\varphi},\gcode{\psi}) :  k<\omega \wedge (\gcode{\varphi},\gcode{\psi}) \in x^{{(2m+2)}\#}_{F(2m+1,k)}\},
\end{align*}
where $F(1,k) = k$, $F(l+1, k) = \omega^{F(l,k)}$ in ordinal arithmetic.

Assume by induction that:
\begin{enumerate}[resume*=ind]
\item \label{item:evensharp} $x^{(2m+2)\#}$ is many-one equivalent to $M_{2m+1}^{\#}(x)$, the many-one reductions being independent of $x$.
\item \label{item:subset_does_not_increase} If $A \subseteq \bolddelta{2m+1}$, then $A \in \mathbb{L}[T_{2m+1}]$ iff $A \in \mathbb{L}_{\bolddelta{2m+3}}[T_{2m+2}]$.
\end{enumerate}
(\ref{item:subset_does_not_increase}:$m$) ensures that the $\mathbb{L}[T_{2m+1}]$-measures on $\bolddelta{2m+1}$ induced by level $\leq 2m+1$ trees are indeed $\mathbb{L}_{\bolddelta{2m+3}}[T_{2m+2}]$-measures. 

(\ref{item:bk_km}:$m$) enables the generalization of Silver's dichotomy on $\Pi^1_{2m+3}(x)$ equivalence relations: If $E$ is a thin $\Delta^1_{2m+3}(x)$ equivalence relation on $\mathbb{R}$, then $E$ is $\Delta^1_{2m+3}(x)$-reducible to $=_{u^{(2m+1)}_{E(2m+1)}}$, where $\alpha =_{u^{(2m+1)}_{E(2m+1)}} \beta$ iff $\alpha=\beta <u^{(2m+1)}_{E(2m+1)}$. As a corollary, if $\leq^{*}$ is a $\Delta^1_{2m+3}(x)$ prewellordering on $\mathbb{R}$ and $A$ is a $\Sigma^1_{2m+3}(x)$ subset of $\mathbb{R}$, then $\sharpcode{\leq^{*}}$ and $\set{\wocode{y}_{\leq^{*}}}{y \in A}$ are both $\Delta_1$-definable over $L_{\kappa_{2m+3}^x}[T_{2m+2}, M_{2m+1}^{\#}(x)]$ from parameters in $\se{T_{2m+2}, M_{2m+1}^{\#}(x)}$. This proves an effective version of the Harrington-Kechris theorem (cf.\  \cite[8G.21]{mos_dst}):
\begin{quote}
  If $\leq$ and $\leq'$ are two $\Delta^1_{2m+3}$ prewellorderings of $\mathbb{R}$, then the relation $\wocode{x}_{\leq} = \wocode{y}_{\leq'}$ is $\Delta^1_{2m+4}$ and is absolute in $M$ whenever $M$ is a $\Sigma^1_{2m+3}$-correct transitive model of ZFC and $\mathbb{R}^M$ is closed under the $M_{2m+1}^{\#}$-operator.
\end{quote}
Consequently, if $\Gamma$ is a pointclass containing $\Delta^1_{2m+4}$ and is closed under recursive preimages, then $\Gamma$ acting on spaces of the form  $(\bolddelta{2m+3})^k \times \mathbb{R}^l$ is independent of the choice of the $\Pi^1_{2m+3}$-coding of ordinals in $\bolddelta{2m+3}$. That is,  if $\varphi$ is a regular $\Pi^1_{2m+3}$-norm on a good universal $\Pi^1_{2m+3}$ set, then for any $A \subseteq \bolddelta{2m+3} \times \mathbb{R}$, $\set{(v,x)}{(\varphi(v),x) \in A}$ is in $\Gamma$ iff $\set{(v,x)}{(\sharpcode{v},x)}$ is in $\Gamma$.

$\bolddelta{2m+1}$ is said to have \emph{the level-($2m+2$) strong partition property} iff for every finite level $\leq 2m+2$ tree $Q$, for every $A \in \mathbb{L}_{\bolddelta{2m+3}}[T_{2m+2}]$, there is are clubs $C_i \in \nu_{2i+1}$ for $i \leq m$ such that either   $[(C_i)_{i \leq m}]^{Q \uparrow} \subseteq A$ or $[(C_i)_{i \leq m}]^{Q \uparrow} \cap A = \emptyset$.
 We assume by induction that: 
\begin{enumerate}[resume*=ind]
\item \label{item:strong_partition} $\bolddelta{2m+1}$ has the level-($2m+2$) strong partition property.
\item \label{item:Q_direct_limit_wf} Suppose $(Q_k)_{1 \leq k < \omega}$ is an infinite level-($2m+2$) tower and $Q_{\omega} = \cup_{k<\omega} Q_k$. Then $Q_{\omega}$ is $\Pi^1_{2m+2}$-wellfounded iff the direct limit of $(j^{Q_k,Q_{k'}})_{k \leq k' < \omega}$ is wellfounded.
\end{enumerate}

If $Q$ is a finite level $\leq 2m+2$ tree, define
\begin{displaymath}
  A \in \mu^Q
\end{displaymath}
iff $A \subseteq [(\bolddelta{2i+1})_{i \leq m}]^{Q \uparrow}$, $A \in \mathbb{L}_{\bolddelta{2m+3}}[T_{2m+2}]$ and there is $\vec{C} = (C_i)_{i \leq m}$ such that $\vec{C} \in \prod_{i \leq m}\nu_{2i+1}$ and $[\vec{C}]^{Q \uparrow}\subseteq A$. By~(\ref{item:strong_partition}:$m$), $\mu^Q$  is an $\mathbb{L}_{\bolddelta{2m+3}}[T_{2m+2}]$-measure and $\mu^Q$ is product measure of its level $\leq 2m$ component, its level-($2m+1$) component and its level-($2m+2$) component. Let
\begin{displaymath}
  j^Q : \mathbb{L}_{\bolddelta{2m+3}}[T_{2m+2}] \to \mathbb{L}_{\sup (j^Q)''\bolddelta{2m+3}}[j^Q(T_{2m+2})]
\end{displaymath}
be the associated $\mathbb{L}_{\bolddelta{2m+3}}[T_{2m+2}]$-ultrapower map. 
Assume by induction that:
\begin{enumerate}[resume*=ind]
\item \label{item:ultrapower_bound} Suppose $Q$ is a finite level $\leq 2m+2$ tree. Then $j^Q(\alpha)< \bolddelta{2m+3}$ for any $\alpha < \bolddelta{2m+3}$. $j^Q(T_{2m+2}) \in \mathbb{L}_{\bolddelta{2m+3}}[T_{2m+2}]$.
\end{enumerate}
So $\sup (j^Q)'' \bolddelta{2m+3} = \bolddelta{2m+3}$. 
If $Q$ is a subtree of $Q$, $j^{Q,Q'}$ is the induced factor map. If $\pi$ factors $(Q,T)$, $\pi^T$ is the induced factor map. By~(\ref{item:subset_does_not_increase}:$m$), $j^Q \res \bolddelta{2m+1} = j^{\comp{\leq 2m}{Q}} \res \bolddelta{2m+1}$, and similarly for $j^{Q,Q'}$ and $\pi^T$. 
Define $j^Q_{\sup}$, $j^{Q,Q'}_{\sup}$, $\pi^T_{\sup}$ as usual. 
As advertised in the end of \cite{sharpIII}, we need to prove that every  $\mathbb{L}_{\bolddelta{2m+3}}[T_{2m+2}]$ is equivalent to $\mu^Q$ for some finite level $\leq 2m+2$ tree $Q$ and from this, establish a $\Delta^1_{2m+3}$ coding of subsets of  $u^{(2m+1)}_{E(2m+1)}$ in $\mathbb{L}_{\bolddelta{2m+3}}[T_{2m+2}]$.
We assume by induction that 
\begin{enumerate}[resume*=ind]
\item \label{item:measure_analysis} Suppose $P$ is a finite level-$(2m+1)$ tree and $\mu$ is a nonprincipal $\mathbb{L}_{\bolddelta{2m+3}}[T_{2m+2}]$-measure on $j^P(\bolddelta{2m+1})$. Then there are functions $g,h \in \mathbb{L}[j^P(T_{2m+1})]$, a finite level $\leq 2m+2$ tree $Q$, nodes $(d_1,q_1),\dots,(d_k,q_k) \in \dom(Q)$, such that $h : j^P(\bolddelta{2m+1}) \to ( j^P(\bolddelta{2m+1}) )^k$, $g:(j^P(\bolddelta{2m+1}))^k \to j^P(\bolddelta{2m+1})$, $h$ is 1-1 a.e.\ ($\nu$), $g$ is 1-1 a.e.\ ($\mu^Q_{\overrightarrow{(d,q)}}$), $g = h^{-1}$ a.e.\ ($\nu$), and $A \in \mu^Q_{\overrightarrow{(d,q)}}$ iff $(h^{-1})''A \in \nu$.
\item \label{item:coding} There is a $\Delta^1_{2m+3}$ set $X\subseteq \mathbb{R} \times  u^{(2m+1)}_{E(2m+1)} $ such that every subset of $u^{(2m+1)}_{E(2m+1)}$ in  $\mathbb{L}_{\bolddelta{2m+3}}[T_{2m+2}]$ is equal to some $X_v \DEF \set{a}{(v,a) \in X}$.
\end{enumerate}
Renaming the second coordinate in (\ref{item:coding}:m), we fix a $\Delta^1_{2m+3}$ set
\begin{displaymath}
X^{(2m+3)} \subseteq \mathbb{R} \times ( V_{\omega} \cup u^{(2m+1)}_{E(2m+1)})^{<\omega}
\end{displaymath}
such that every subset of $(V_{\omega} \cup u^{(2m+1)}_{E(2m+1)})^{<\omega}$ in $\mathbb{L}_{\bolddelta{2m+3}}[T_{2m+2}]$ is equal to some $X^{(2m+3)}_v \DEF \set{a}{(v,a) \in X^{(2m+3)}}$. Let $v \in \LO^{(2m+3)}$ iff $X^{(2m+3)}_v$ is a linear ordering of $u^{(2m+1)}_{E(2m+1)}$, $v \in \WO^{(2m+3)}$ iff $X_v$ is a wellordering of $u^{(2m+2)}_{E(2m+1)}$. $\LO^{(2m+3)}$ is $\Delta^1_{2m+3}$, $\WO^{(2m+3)}$ is $\Pi^1_{2m+3}$. For $v \in \WO^{(2m+3)}$, put $\sharpcode{v} = \ot(X_v)$. Pointclasses are allowed to act on spaces of the form $(\bolddelta{2m+3})^k \times \mathbb{R}^l$ via this coding.

Suppose  $Q$ is a level $\leq 2m+2$ tree and $W$ is a level $\leq 2m+1$ tree. If $m=0$, the notations related to $(Q,W,*)$-descriptions have been defined in  \cite{sharpII}. Suppose $m>0$.  Suppose $\mathbf{D} = (d, \mathbf{q}, \sigma) \in \desc(Q, W, *)$. For $g \in ((\bolddelta{2i+1})_{i \leq m})^{Q \uparrow}$, let
\begin{displaymath}
  g^W_{\mathbf{D}}
\end{displaymath}
be the function on $[(\bolddelta{2i+1})_{i \leq m}]^{W \uparrow}$ as follows: 
\begin{enumerate}
\item If $d \leq 2m+1$, then $g^W_{\mathbf{D}}(\vec{\alpha}) = (\comp{\leq 2m+1}{g})^{\comp{\leq 2m}{W}}_{\mathbf{D}} (\vec{\alpha})$.
\item If $d=2m+2$ and $\mathbf{D} \in \desc(Q,W,Z)$,  then $g^W_{\mathbf{D}} (\vec{\alpha}) =  j^Z (\comp{2m+2}{g}_{\mathbf{q}} )  \circ \id_{\sigma}^{W,Z} (\vec{\alpha})$, or equivalently, $g^W_{\mathbf{D}} ([f]^W) = [\comp{2m+2}{g}_{\mathbf{q}} \circ f^Z_{\sigma}]_{\mu^Z}$. Here $\id_{\sigma}^{W,Z}$ and $f^Z_{\sigma}$ have already been defined by induction and $\id_{\sigma}^{W,Z}([f]^W) = [f^Z_{\sigma}]_{\mu^Z}$.
\end{enumerate}
In particular, if $\mathbf{D}$ is the constant $(Q,*)$-description, then $g_{\mathbf{D}}^W$ is the constant function with value $\bolddelta{2m+1}$. 
Suppose additionally that $Q$ is finite. Let
\begin{displaymath}
  \id_{\mathbf{D}}^{Q,W}
\end{displaymath}
be the function  $[g]^Q\mapsto [g_{\mathbf{D}}^W]_{\mu^W}$, or equivalently,  $\vec{\beta} \mapsto \id_{\mathbf{D}}^{\comp{\leq 2m+1}{Q}, \comp{\leq 2m}{W}} ( \comp{\leq 2m+1}{\vec{\beta}} )$ if $d\leq 2m+1$, 
$\vec{\beta}\mapsto \sigma^{W , Z} (  \comp{d}{\beta}_{\mathbf{q}})$ otherwise.
\begin{displaymath}
  \seed^{Q,W}_{\mathbf{D}}
\end{displaymath}
is the element represented modulo $\mu^Q$ by $\id^{Q,W}_{\mathbf{D}}$.  In particular, if $d=1$ then $\seed^{Q,W}_{\mathbf{D}} = \seed^Q_{(1,\mathbf{q})}$; if $d>1$, $\mathbf{q} = (q, W, \id_W)$, then $\seed^{Q,W}_{\mathbf{D}} = \seed^Q_{(d, \mathbf{q})}$.
 We assume by induction that:
 \begin{enumerate}[resume*=ind]
 \item\label{item:QW_desc_order} Suppose $Q$ is a finite level $\leq 2m+2$ tree, $W$ is a finite level $\leq 2m+1$ tree, $Z$ is a finite level $\leq 2m$ tree, and suppose $\mathbf{D} , \mathbf{D}' \in \desc(Q,W,Z)$, $\mathbf{D} \prec^{Q,W} \mathbf{D}'$,  $\mathbf{D} =  (d,\mathbf{q},\sigma) $, $\mathbf{D}' = (d',\mathbf{q}', \sigma') $. Then for any  $g \in ((\bolddelta{2i+1})_{i \leq m})^{Q \uparrow}$, for any $f \in ((\bolddelta{2i+1})_{i \leq m})^{W \uparrow}$, for any $\vec{\eta} \in [((\bolddelta{2i+1})_{i < m})]^{Z \uparrow}$,
   \begin{displaymath}
\comp{d}{g}_{\mathbf{q}} \circ f^Z_{\sigma} (\vec{\eta}) <  \comp{d'}{g}_{\mathbf{q}'} \circ f^Z_{\sigma'} (\vec{\eta}) .
\end{displaymath}
 \item\label{item:QW_desc_target_extension} Suppose $Q$ is a finite level $\leq 2m+2$ tree, $W, W'$ are finite level $\leq 2m+1$ trees, $W$ is a proper subtree of $W'$, $\mathbf{D} = (d,\mathbf{q}, \sigma) \in \desc(Q,W,Z)$, $\mathbf{D}' = (d',\mathbf{q}',\sigma') \in \desc(Q',W', Z')$, $\mathbf{D} = \mathbf{D}' \res (Q,W)$. Suppose $\vec{E} = (E_i)_{i \leq m} \in \prod_{i \leq m} \nu_{2i+1}$, each $E_i$ is a club, $\eta \in E_i'$ iff $E_i \cap \eta$ has order type $\eta$, $\vec{E}' = (E_i)_{i \leq m}$. Then for any $g \in ((\bolddelta{2i+1})_{i \leq m})^{Q \uparrow}$, for any $\vec{\alpha} \in [\vec{E}']^{W \uparrow}$,
   \begin{displaymath}
     j^{Z,Z'} \circ g^W_{\mathbf{D}} (\vec{\alpha}) = \sup \set{ g^{W'}_{\mathbf{D}'} (\vec{\beta}) }{\vec{\beta} \in [E]^{W' \uparrow}, \vec{\beta} \text{ extends $\vec{\alpha}$}}.
   \end{displaymath}
 \item\label{item:QW_desc_origin_extension}  Suppose $Q, Q'$ are finite level $\leq 2m+2$ trees, $Q$ is a proper subtree of $Q'$, $W$ is a finite level $\leq 2m+1$ tree and $\mathbf{D} \in \desc(Q,W,*)$, $\mathbf{D}' \in \desc(Q',W, *)$, $\mathbf{D}= \mathbf{D}' \res (Q,W)$. Suppose $\vec{E} = (E_i)_{i \leq m} \in \prod_{i \leq m} \nu_{2i+1}$, each $E_i$ is a club, $\eta \in E_i'$ iff $E_i \cap \eta$ has order type $\eta$, $\vec{E}' = (E_i)_{i \leq m}$. Then for any $\vec{\beta} \in [\vec{E}']^{Q \uparrow}$,
   \begin{displaymath}
     \id^{Q,W}_{\mathbf{D}}(\vec{\beta})  =  \sup \set{ \id^{Q',W}_{\mathbf{D}'} (\vec{\gamma})}{ \vec{\gamma} \in [\vec{E}]^{Q' \uparrow}, \vec{\gamma} \text{ extends } \vec{\beta}}. 
   \end{displaymath}
\end{enumerate}

Suppose $S$ is a level $\leq 2m+1$ tree and $\tau$ factors $(S, Q, *)$. 
For  $g \in (\bolddelta{2m+1})^{Q \uparrow}$, let
\begin{displaymath}
  g_{\tau}^W 
\end{displaymath}
be the function sending $\vec{\alpha}$ to $ (g_{\tau(d,s)}^W(\vec{\alpha}))_{(d,s) \in \dom(S) } $.    
If $W$ is finite, let
\begin{displaymath}
  \id^{Q,W}_{\tau} 
\end{displaymath}
is the map sending $[g]^Q$ to $[g^W_{\tau}]_{\mu^W}$. So $\id^{Q,W}_{\tau} (\vec{\beta}) = (\id^{Q,W}_{\tau (d,s )}(\vec{\beta}))_{(d,s) \in \dom(S)}$. If $Q,W$ are both finite, put 
\begin{displaymath}
  \seed_{\tau}^{Q,W} =  [\id^{Q,W}_{\tau}]_{\mu^Q}.
\end{displaymath}
%
We assume by induction that:
\begin{enumerate}[resume*=ind]
\item \label{item:SQ_factor}Suppose $Q$ is a finite level $\leq 2m+2$ tree, $S,W$ are finite level $\leq 2m+1$ trees, $\tau$ factors $(S,Q,W)$. Then for any $A \in \mu^S$, $\seed^{Q,W}_{\tau} \in j^Q \circ j^W(A)$. 
\end{enumerate}

We define
\begin{displaymath}
  \tau^{Q,W} : \mathbb{L}_{\bolddelta{2m+3}}[j^S(T_{2m+2})] \to \mathbb{L}_{\bolddelta{2m+3}}[j^Q \circ j^W(T_{2m+2})]
\end{displaymath}
by sending $j^S(h)(\seed^S) $ to $ j^Q\circ j^W (h)  ( \seed_{\tau}^{Q,W} )$. 
Assume by induction that:
\begin{enumerate}[resume*=ind]
\item \label{item:tensor_product_QW}
Suppose $Q$ is a finite level $\leq 2m+2$ tree and $W$ is a finite level $\leq 2m+1$ tree. Then
\begin{enumerate}
\item $(\id_{Q \otimes W})^{Q,W}$ is the identity on $j^Q \circ j^W(\bolddelta{2m+1}+1)$. 
\item Suppose $W'$ is another finite level $\leq 2m+1$ tree and $\sigma$ factors $(W,W')$. Then $j^Q ( \sigma^{W'} \res j^W(\bolddelta{2m+1}+1)) = (Q \otimes \sigma)^{Q \otimes W'} \res (j^{Q \otimes W} (\bolddelta{2m+1}+1))$.
\item Suppose $Q'$ is another finite level $\leq 2m+2$ tree and $\pi$ factors $(Q,Q')$. Then $\pi^{Q'} \res (j^Q \circ j^W (\bolddelta{2m+1}+1)) = (\pi \otimes W)^{Q' \otimes W} \res j^{Q \otimes W}(\bolddelta{2m+1}+1)$.
\end{enumerate}
\item \label{item:local_definability_jQ} Suppose $x \in \mathbb{R}$ and $Q$ is a finite level $\leq 2m+2$ tree. Then $j^Q(\kappa_{2m+3}^x) = \kappa_{2m+3}^x$ and $j^Q \res \kappa_{2m+3}^x$ is $\Delta_1$-definable over $L_{\kappa_{2m+3}^x}[T_{2m+2},x]$ from $\se{T_{2m+2},x}$, uniformly in $(Q,x)$.
\item \label{item:regular_cardinals_Q} Suppose $Q$ is a finite level $\leq 2m+2$ tree. 
 Then the set of uncountable $\mathbb{L}_{\bolddelta{2m+3}}[j^Q(T_{2m+2})]$-cardinals in the interval $[\bolddelta{2m+1}, \bolddelta{2m+3})$ is
  \begin{displaymath}
    \set{u^{(2m+1)}_{\xi}}{0 < \xi \leq E(2m+1)},
  \end{displaymath}
and the set of uncountable $\mathbb{L}_{\bolddelta{2m+3}}[j^Q(T_{2m+2})]$-regular cardinals in the interval $[\bolddelta{2m+1}, \bolddelta{2m+3})$ is
  \begin{displaymath}
    \set{\seed^Q_{(d,\mathbf{q})}}{2m+1 \leq d \leq 2m+2, (d,\mathbf{q}) \in \exdesc(Q) \text{ is regular}}.
  \end{displaymath}
In particular, the set of uncountable $\mathbb{L}_{\bolddelta{2m+3}}[T_{2m+2}]$-regular cardinals  in the interval $[\bolddelta{2m+1}, \bolddelta{2m+3})$ is
\begin{displaymath}
          \{j^P(\bolddelta{2m+1}):  P \text{ level-($2k+1$) tree}, \card(P) \leq 1\}.
\end{displaymath}
\end{enumerate}

By (\ref{item:subset_does_not_increase_0}:$k$)(\ref{item:subset_does_not_increase}:$k$)(\ref{item:strong_partition}:$k$)(\ref{item:ultrapower_bound}:$k$)(\ref{item:regular_cardinals_Q}:$k$) for $k \leq m$, if $Q$ is a finite level $\leq 2m+2$ tree, then the set of  uncountable $\mathbb{L}_{\bolddelta{2m+3}}[j^Q(T_{2m+2})]$-cardinals is
  \begin{displaymath}
    \set{u^{(2k+1)}_{\xi}}{k \leq m, 0 < \xi \leq E(2k+1)},
  \end{displaymath}
and the set of uncountable $\mathbb{L}_{\bolddelta{2m+3}}[j^Q(T_{2m+2})]$-regular cardinals is
  \begin{displaymath}
    \set{\seed^Q_{(d,\mathbf{q})}}{(d,\mathbf{q}) \in \exdesc(Q) \text{ is regular}}.
  \end{displaymath}

  Suppose $Q$ is a finite level $\leq 2m+2$ tree, $\overrightarrow{(d,q)} = (d_i, q_i)_{1 \leq i < k}$ is a distinct enumeration of a subset of $Q$ and such that for each $d>1$, $\set{q_i}{d_i = d}\cup \se{\emptyset}$ forms a tree on $\omega^{<\omega}$. Suppose $F : [(\bolddelta{2i+1})_{i \leq m}]^{Q \uparrow} \to \bolddelta{2m+3}$ is a function which lies is $\mathbb{L}_{\bolddelta{3}}[T_{2m+2}]$. The \emph{signature} of $F$ is $\overrightarrow{(d,q)}$ iff 
there is $\vec{E} \in \prod_{i \leq m } \nu_{2i+1}$ such that 
\begin{enumerate}
\item for any $\vec{\beta}, \vec{\gamma} \in [E]^{Q \uparrow}$, if $(\comp{d_0}{\gamma}_{q_0}, \ldots, \comp{d_{k-1}}{\gamma}_{q_{k-1}}) <_{BK} (\comp{d_0}{\beta}_{q_0}, \ldots, \comp{d_{k-1}}{\beta}_{q_{k-1}})$ then $f(\vec{\beta}) < f(\vec{\gamma})$;
\item for any $\vec{\beta}, \vec{\gamma} \in [E]^{Q \uparrow}$, if $(\comp{d_0}{\gamma}_{q_0}, \ldots, \comp{d_{k-1}}{\gamma}_{q_{k-1}}) = (\comp{d_0}{\beta}_{q_0}, \ldots, \comp{d_{k-1}}{\beta}_{q_{k-1}})$ then $f(\vec{\beta}) = f(\vec{\gamma})$.
\end{enumerate}
Clearly the signature of $F$ exists and is unique. In particular, $F$ is constant on a $\mu^Q$-measure one set iff the signature of $F$ is $\emptyset$. 

Suppose the signature of $F$ is  $\overrightarrow{(d,q)} = ((d_i, q_i))_{1 \leq i < k}$. 
$F$ is \emph{essentially continuous} iff for $\mu^Q$-a.e.\ $\vec{\beta}$, $F(\vec{\beta}) = \sup\set{F(\vec{\gamma})}{\vec{\gamma} \in [(\delta^1_{2i+1})_{i \leq m}]^{Q \uparrow}, (\comp{d_0}{\gamma}_{q_0}, \ldots, \comp{d_{k-1}}{\gamma}_{q_{k-1}}) <_{BK} (\comp{d_0}{\beta}_{q_0}, \ldots, \comp{d_{k-1}}{\beta}_{q_{k-1}})}$. Otherwise, $F$ is \emph{essentially discontinuous}. 
Put $[\vec{B}]^{Q \uparrow (0,-1)} =  [\vec{B}]^{Q \uparrow} \times \omega $. For 
 $(d,\mathbf{q}) \in \exdesc(Q)$ regular, put $[\vec{B}]^{Q \uparrow(d, \mathbf{q})}=
 \set{(\vec{\beta},\gamma)}{\vec{\beta} \in [\vec{B}]^{Q \uparrow} , \gamma < \comp{d}{\beta}_{\mathbf{q}}}$.
For $(d,\mathbf{q}) $ either $(0,-1)$ or in $ \exdesc(Q)$ regular, 
say that the \emph{uniform cofinality} of $F$ is $\ucf(F)  =  (d,\mathbf{q})$ iff 
 there is  $G : [(\delta^1_{2i+1})_{i \leq m}]^{Q\uparrow (d,\mathbf{q}) } \to \bolddelta{2m+3}$ such that $G \in \mathbb{L}_{\bolddelta{2m+3}}[T_{2m+2}]$ and for any for $\mu^Q$-a.e.\ $\vec{\beta}$,   $F(\vec{\beta}) = \sup \{G(\vec{\beta}, \gamma) :  (\vec{\beta}, \gamma) \in [(\delta^1_{2i+1})_{i \leq m}]^{Q \uparrow (d,\mathbf{q})}\}$ and the function $\gamma \mapsto G(\vec{\beta}, \gamma)$ is order preserving. 
Let $(X_i, (d_i,x_i, W_i)) \concat (X_k)$ be the partial level $\leq 2m+2$ tower of continuous type and let $\pi$ factor $(X_k, Q)$ such that $\pi(d_i, x_i) = (d_i, q_i)$ for each $1 \leq i < k$. The \emph{potential partial level $\leq 2m+2$ tower} induced by $F$ is
\begin{enumerate}
\item $(X_k, (d_i,x_i, W_i)_{1 \leq i < k})$, if $F$ is essentially continuous;
\item $(X_k, (d_i,x_i,W_i)_{1 \leq i < k} \concat (0,-1,\emptyset))$, if $F$ is essentially discontinuous and has uniform cofinality $(0,-1)$;
\item $(X_k, (d_i,x_i,W_i)_{1 \leq i < k} \concat (d^{+}, x^{+},W^{+}))$, if $F$ is essentially discontinuous and has uniform cofinality $(d_{*},\mathbf{q}_{*})$, $d_{*}>0$, $(X_k, (d^{+},x^{+},W^{+}))$ is a partial level $\leq 2m+2$ tree with uniform cofinality $(d_{*}, \mathbf{x}_{*})$, $\comp{d_{*}}{\pi}(\mathbf{x}_{*}) = \mathbf{q}_{*}$.
\end{enumerate}
The \emph{approximation sequence} of $F$ is $( F_i)_{1 \leq i \leq  k}$ where $F_i $ is a function on $[(\delta^1_{2i+1})_{i \leq m}]^{X_i\uparrow}$, $F_i(\vec{\beta}) = \sup \{F(\vec{\gamma}):$ $ \vec{\gamma} \in [(\delta^1_{2i+1})_{i \leq m}]^{Q \uparrow},  (\comp{d_1}{\gamma}_{q_1}, \ldots, \comp{d_{i-1}}{\gamma}_{q_{i-1}}) = (\comp{d_1}{\beta}_{x_1}, \ldots, \comp{d_{i-1}}{\beta}_{x_{i-1}}) \}$ for $1 \leq i \leq k$.

  Suppose $\bolddelta{2m+1} \leq \gamma < \bolddelta{2m+3}$ is a limit ordinal. Suppose $Q$ is a finite level $\leq 2m+2$ tree,  $\gamma = [F]_{\mu^Q}$,  the signature of $F$ is $(d_i,q_i)_{1 \leq i < k}$, the approximation sequence of $F$ is $(F_i)_{1 \leq i \leq k}$. Then the \emph{$Q$-signature} of  $\beta$ is $({d_i,q_i})_{1 \leq i < k}$, the \emph{$Q$-approximation sequence} of $\gamma$ is $([F_i]_{\mu^Q})_{1 \leq i \leq k}$,  $\gamma$ is \emph{$Q$-essentially continuous} iff $F$ is essentially continuous. The \emph{$Q$-uniform cofinality} of $\gamma$ is $\omega$ if $F$ has uniform cofinality $(0,-1)$, $\seed^Q_{(d,\mathbf{q})}$ if $f$ has uniform cofinality $(d,\mathbf{q}) \in \exdesc(Q)$. 
The \emph{$Q$-(potential) partial level $\leq 2m+2$ tower induced by $\gamma$} and the \emph{$Q$-factoring map} are the potential partial level $\leq 2m+2$ tower induced by $F$ and the factoring map induced by $F$ respectively. Assume by induction that:
\begin{enumerate}[resume*=ind]
\item \label{item:Q_signature}
If $x \in \mathbb{R}$ and $\gamma < \kappa_{2m+3}^x$, then the $Q$-potential partial level $\leq 2m+2$ tower induced by $\gamma$ and the $Q$-approximation sequence of $\gamma$ are uniformly $\Delta_1$-definable over $L_{\kappa_{2m+3}^x}[T_{2m+2},x]$ from $(T_{2m+2},x, Q, \gamma)$.
\end{enumerate}

Suppose $T,Q$ are level $\leq 2m+2$ trees and $\mathbf{C}  =  (d, \mathbf{t}, \tau) \in \desc(T,Q,*)$. For $h \in ((\delta^1_{2i+1})_{i \leq m})^{T \uparrow}$,
\begin{displaymath}
  h^Q_{\mathbf{C}}
\end{displaymath}
is the function on $[(\delta^1_{2i+1})_{i \leq m}]^{Q \uparrow}$ defined as follows:
\begin{enumerate}
\item If $d \leq 2m+1$, then $  h^Q_{\mathbf{C}} (\vec{\beta}) = ( \comp{\leq 2m+1}{h})^{\comp{\leq 2m}{Q}}_{\mathbf{C}} (\vec{\beta})$.
\item If $d=2m+2$, then $h^Q_{\mathbf{C}} (\vec{\beta}) = j^W(\comp{2m+2}{h}_{\mathbf{t}}) \circ \id^{Q,W}_{\tau} (\vec{\beta})$, or equivalently, $h^Q_{\mathbf{C}}([g]^Q) = [\comp{2m+2}{h}_{\mathbf{t}} \circ g^W_{\tau}]_{\mu^W}$. 
\end{enumerate}
Suppose additionally that $T$ is finite. Let
\begin{displaymath}
  \id_{\mathbf{C}}^{T,Q}
\end{displaymath}
be the function  $[g]^T\mapsto [g_{\mathbf{C}}^Q]_{\mu^Q}$, or equivalently,  $\vec{\xi} \mapsto \id_{\mathbf{C}}^{\comp{\leq 2m+1}{T}, \comp{\leq 2m}{Q}} ( \comp{\leq 2m+1}{\vec{\xi}} )$ if $d\leq 2m+1$, 
$\vec{\xi}\mapsto \tau^{Q , W} (  \comp{d}{\xi}_{\mathbf{q}})$ otherwise.
\begin{displaymath}
  \seed^{T,Q}_{\mathbf{C}}
\end{displaymath}
is the element represented modulo $\mu^T$ by $\id^{T,Q}_{\mathbf{C}}$.  By (\ref{item:tensor_product_QW}:$m$) and induction hypotheses at lower levels, we have:
\begin{enumerate}
\item If $d=1$, then $\seed^{T,Q}_{\mathbf{C}} = \seed^T_{\mathbf{C}}$.
\item If $d>1$ and $\mathbf{C} \in \desc(T,Q, W)$, then $\seed^{T,Q}_{\mathbf{C}} = \seed^{T, Q \otimes W}_{\mathbf{C}} = \seed^{T \otimes (Q \otimes W)}_{\mathbf{C}}$.
\end{enumerate}

We assume by induction that:
\begin{enumerate}[resume*=ind]
 \item\label{item:TQ_desc_order} Suppose $T,Q$  are finite level $\leq 2m+2$ trees,  $W$ is a finite level $\leq 2m+1$ tree, and suppose $\mathbf{C} , \mathbf{C}' \in \desc(T,Q,W)$, $\mathbf{C} \prec^{T,Q} \mathbf{C}'$,  $\mathbf{C} =  (d,\mathbf{t},\tau) $, $\mathbf{C}' = (d',\mathbf{t}', \tau') $. Then for any  $h \in ((\bolddelta{2i+1})_{i \leq m})^{T \uparrow}$, for any $f \in ((\bolddelta{2i+1})_{i \leq m})^{Q \uparrow}$, for any $\vec{\alpha} \in [((\bolddelta{2i+1})_{i  \leq m})]^{W \uparrow}$,
   \begin{displaymath}
\comp{d}{g}_{\mathbf{t}} \circ f^W_{\tau} (\vec{\alpha}) <  \comp{d'}{g}_{\mathbf{t}'} \circ f^W_{\tau'} (\vec{\alpha}) .
\end{displaymath}
 \item\label{item:TQ_desc_target_extension} Suppose $T, Q,Q'$ are finite level $\leq 2m+2$ trees, $Q$ is a proper subtree of $Q'$, $\mathbf{C} = (d,\mathbf{t}, \tau) \in \desc(T,Q,W)$, $\mathbf{C}' = (d',\mathbf{t}',\tau') \in \desc(T',Q', W')$, $\mathbf{C} \mathbf{C}' \res (T,Q)$. Suppose $\vec{E} = (E_i)_{i \leq m} \in \prod_{i \leq m} \nu_{2i+1}$, each $E_i$ is a club, $\eta \in E_i'$ iff $E_i \cap \eta$ has order type $\eta$, $\vec{E}' = (E_i)_{i \leq m}$. Then for any $h \in ((\bolddelta{2i+1})_{i \leq m})^{T \uparrow}$, for any $\vec{\alpha} \in [\vec{E}']^{Q \uparrow}$,
   \begin{displaymath}
     j^{W,W'} \circ h^Q_{\mathbf{C}} (\vec{\alpha}) = \sup \set{ h^{Q'}_{\mathbf{C}'} (\vec{\beta}) }{\vec{\beta} \in [E]^{Q' \uparrow}, \vec{\beta} \text{ extends $\vec{\alpha}$}}.
   \end{displaymath}
 \item\label{item:TQ_desc_origin_extension}  Suppose $T, T', Q$ are finite level $\leq 2m+2$ trees, $T$ is a proper subtree of $T'$ and $\mathbf{C} \in \desc(T,Q,*)$, $\mathbf{C}' \in \desc(T',Q, *)$, $\mathbf{C} =\mathbf{C}' \res (T,Q)$. Suppose $\vec{E} = (E_i)_{i \leq m} \in \prod_{i \leq m} \nu_{2i+1}$, each $E_i$ is a club, $\eta \in E_i'$ iff $E_i \cap \eta$ has order type $\eta$, $\vec{E}' = (E_i)_{i \leq m}$. Then for any $\vec{\xi} \in [\vec{E}']^{T' \uparrow}$,
   \begin{displaymath}
     \id^{T,Q}_{\mathbf{C}}(\vec{\xi})  =  \sup \set{ \id^{T',Q}_{\mathbf{C}'} (\vec{\eta})}{ \vec{\eta} \in [\vec{E}]^{T \uparrow}, \vec{\eta} \text{ extends } \vec{\xi}}. 
   \end{displaymath}
\end{enumerate}

Suppose $X$ is a level $\leq 2m+2$ tree and $\pi$ factors $(X,T,Q)$. For $h \in ((\delta^1_{2i+1})_{i \leq m})^{T \uparrow}$, let
\begin{displaymath}
  h_{\pi}^Q
\end{displaymath}
be the function sending $\vec{\beta}$ to $(h^Q_{\pi(d,x)}(\vec{\beta}))_{(d,x) \in \dom(X)}$. If $Q$ is finite, let
\begin{displaymath}
  \id^{T,Q}_{\pi}
\end{displaymath}
be the map sending $[h]^T$ to $[h^Q_{\pi}]_{\mu^Q}$. So $\id^{T,Q}_{\pi}(\vec{\xi}) = (\id^{T,Q}_{\pi(d,x)}(\vec{\xi}))_{(d,x) \in \dom(X)}$. If $T,Q$ are both finite, put
\begin{displaymath}
  \seed^{T,Q}_{\pi} = [\id^{T,Q}_{\pi}]_{\mu^T}.
\end{displaymath}
We assume by induction that:
\begin{enumerate}[resume*=ind]
\item \label{item:XT_factor}Suppose $X,T,Q$ are finite level $\leq 2m+2$ trees. Then for any $A \in \mu^X$, $\seed^{T,Q}_{\pi} \in j^T \circ j^Q(A)$.
\end{enumerate}

We define
\begin{displaymath}
  \pi^{T,Q} :  \mathbb{L}_{\bolddelta{2m+3}}[j^X(T_{2m+2})] \to \mathbb{L}_{\bolddelta{2m+3}}[j^T \circ j^Q(T_{2m+2})]
\end{displaymath}
by sending $j^X(h)(\seed^X) = j^T \circ j^Q(h) (\seed^{T,Q}_{\pi})$.

We assume by induction that: 
\begin{enumerate}[resume*=ind]
\item \label{item:tensor_product_TQ} 
Suppose $X,T,Q$ are finite level $\leq 2m+2$ trees and $\pi$ factors $(X,T)$. Then
\begin{enumerate}
\item $j^T \circ j^Q = j^{T \otimes Q}$.
    \item $j^Q ( \pi ^T \res a )  =  (Q \otimes \pi)^{Q \otimes T} \res j^Q(a)$ for any $a \in \mathbb{L}_{\bolddelta{2m+3}}[T_{2m+2}]$;
    \item  $\pi^T  \res \mathbb{L}_{\bolddelta{2m+3}}[j^{X \otimes Q}(T_{2m+2})] = (\pi \otimes Q)^{T \otimes Q}$.
\end{enumerate}
\end{enumerate}

Suppose $Q$ is a finite level $\leq 2m+2$ tree. Suppose $(d,\mathbf{q})\in \exdesc(Q)$, and if $d>1$ then $\mathbf{q} = (q,P, \vec{p})$. Put
\begin{displaymath}
  \llbracket(d,\mathbf{q}) \rrbracket _Q =
  \begin{cases}
    \wocode{(1, (\mathbf{q}))}_{<^Q} & \text{ if } d=1, \\
    [\vec{\alpha} \mapsto \wocode{ (d, \vec{\alpha} \oplus_{\comp{d}{Q}} q)}_{<^Q}]_{\mu^P} & \text{ if }d>1.
  \end{cases}
\end{displaymath}
To save ink, put $\llbracket d, \mathbf{q} \rrbracket_Q  = \llbracket (d,\mathbf{q}) \rrbracket_Q$. 
If in addition, $d>1$ and $\mathbf{q}=(q,\dots)\in \desc(\comp{d}{Q})$ of discontinuous type, put  $\llbracket d,q \rrbracket_Q = \llbracket d, \mathbf{q} \rrbracket_Q$. If $\pi$ factors level $\leq 2m+2$ trees $(Q,T)$, then $\pi$ is said to \emph{minimally factor} $(Q,T)$ iff $X,T$ are both $\Pi^1_{2m+2}$-wellfounded and for any $(d,q) \in \dom(Q)$, $\llbracket d,q \rrbracket_Q = \llbracket \pi(d,q) \rrbracket_T$. Assume by induction that:
\begin{enumerate}[resume*=ind]
\item \label{item:order_type_embed} Suppose $X,T$ are $\Pi^1_{2m+2}$-wellfounded level $\leq 2m+2$ trees. Then there exist a $\Pi^1_{2m+2}$-wellfounded level $\leq 2n$ tree $Q$ and a map $\pi$ minimally factoring $(X,T \otimes Q)$.
\end{enumerate}

Suppose $R$ is a level-($2m+3$) tree. The \emph{ordinal representation} of $R$ is the set
\begin{align*}
  \rep(R) =& \set{\vec{\alpha} \oplus_R r}{r \in \dom(R), \vec{\alpha} \text{ respects } R_{\tree}(r)} \\
 & \cup \set{\vec{\alpha} \oplus_R r \concat(-1)}{r \in \dom(R), \vec{\alpha} \text{ respects }R(r)}. 
\end{align*}
Here for $r \in \bardom(R)$ of length $k$, $\vec{\alpha} \oplus_R r = (r(0), \alpha_{R_{\node}(r \res 1)}, r(1), \dots, \alpha_{R_{\node}(r)}, r(k-1))$.
$\rep(R)$ is endowed with the $<_{BK}$ ordering
\begin{displaymath}
  <^R = <_{BK} \res \rep(R).
\end{displaymath}
Assume by induction that:
\begin{enumerate}[resume*=ind]
\item \label{item:rep_R} A level-$(2m+3)$ tree $ R$ is $\Pi^1_{2m+3}$-wellfounded iff $<^R$ is a wellordering.
\end{enumerate}

Suppose $B \in \mathbb{L}_{\bolddelta{3}}[T_{2m+2}]$. Define 
\begin{displaymath}
  F \in {B}^{R \uparrow}
\end{displaymath}
iff $F \in \mathbb{L}_{\bolddelta{2m+3}}[T_{2m+2}]$ is an order preserving function from $\rep(R)$ to $ B$. If $F \in (\bolddelta{2m+3})^{R \uparrow}$, then for any $r \in \dom(R)$,  ${F}_r$ is a function on $[(\bolddelta{2i+1})_{i \leq m}]^{R_{\tree}(r) \uparrow}$ that sends $\vec{\beta}$ to $F(\vec{\beta} \oplus_R r)$, and $F$ represents a tuple of ordinals
\begin{displaymath}
  [F]^R = ([F]^R_r)_{r \in \dom(R)}
\end{displaymath}
where $[F]^R_r = [F_r]_{\mu^{R_{\tree}(r)}}$ for $r \in \dom(R)$. Let
\begin{displaymath}
  [B]^{R \uparrow} = \set{ [F]^R}{ F \in B^{R \uparrow}}.
\end{displaymath}
A tuple of ordinals $\vec{\gamma} = (\gamma_r)_{r \in \dom(R)}$ \emph{respects} $R$ iff $\vec{\gamma} \in [\bolddelta{2m+3}]^{R \uparrow}$. $\vec{\gamma}$ \emph{weakly respects $R$} iff for any $r,r' \in \dom(R)$, if $r$ is a proper initial segment of $r'$, then $j^{R_{\tree}(r), R_{\tree}(r')} (\gamma_r) > \gamma_{r'}$. If $\mathbf{r}=(r, Q, \dots) \in \exdesc(R)$ and $F \in (\bolddelta{2m+3})^{R \uparrow}$, define $F_{\mathbf{r}}$ to be a function on $[(\delta^1_{2i+1})_{i \leq m}]^{Q \uparrow}$: if  $\mathbf{r} \in \desc(R)$, then $F_{\mathbf{r}} = F_r$; if $\mathbf{r} \notin \desc(R)$, then  $F_{\mathbf{r}} ( \vec{\beta} ) = F_r(\vec{\beta}\res R_{\tree}(r))$.  
If $\vec{\gamma} =(\gamma_r)_{r \in \dom(R)}\in [\bolddelta{2m+3}]^{R \uparrow}$, put $\gamma_{\mathbf{r}} = [F_{\mathbf{r}}]_{\mu^Q}$. If $\mathbf{r} \in \desc(R)$ and $\mathbf{A} = (\mathbf{r}, \pi, T) \in \exexdesc(R)$, put $\gamma_{\mathbf{A}} = \pi^T(\gamma_{\mathbf{r}})$. Put $\gamma_{\emptyset} = \gamma_{(\emptyset,\emptyset,\emptyset)} = \bolddelta{2m+3}$.  Thus, if $\mathbf{r} \in \desc(R)$ is of discontinuous type, then $\gamma_{\mathbf{r}} = \gamma_r$; if $\mathbf{r}\notin \desc(R)$, then $\gamma_{\mathbf{r}} = j^{R_{\tree}(r), Q} (\gamma_r) = \gamma_{(\mathbf{r}, \id_{R_{\tree}(r)}, Q)}$. 
The next induction hypothesis computes the remaining case when $\mathbf{r}\in \desc(R)$ is of continuous type, justifying that $\gamma_{\mathbf{r}}$ does not depend on the choice of $F$. 
\begin{enumerate}[resume*=ind]
\item 
    \label{item:gamma_r_continuous_type}
    Suppose $R$ is a level-($2m+3$) tree, $\vec{\gamma} \in [\bolddelta{2m+3}]^{R \uparrow}$, $\mathbf{r} = (r, Q, \overrightarrow{(d,q, P)}) \in \desc(R)$ is of continuous type. Then $\gamma_{\mathbf{r}} = j_{\sup}^{R_{\tree}(r^{-}), Q} (\gamma_{r^{-}})$.
\end{enumerate}

 We assume by induction that: 
\begin{enumerate}[resume*=ind]
\item\label{item:respecting_R} 
  Suppose $R$ is a level-($2m+3$) tree and $\vec{\gamma} = (\gamma_r)_{r \in \dom(R)}$ is a tuple of ordinals in $\bolddelta{2m+3}$.  Then $\vec{\gamma}$ respects $R$ iff the following holds:
  \begin{enumerate}
  \item For any $r \in \dom(R)$, the $R_{\tree}(r)$-potential partial level $\leq 2m+2$ tower induced by $\gamma_r$ is $R[r]$, and the $R_{\tree}(r)$-approximation sequence of $\gamma_r$ is $(\gamma_{p \res l})_{1 \leq  l \leq  \lh(r)}$.
    \item If $R_{\tree}(r \concat (a)) = R_{\tree} (r \concat (b))$ and $a<_{BK}b$ then $\gamma_{p \concat (a)} <\gamma_{p \concat (b)}$.
  \end{enumerate}
Moreover, if $B \in \mathbb{L}_{\bolddelta{2m+3}}[T_{2m+2}]$ is a closed set, $B'$ is the set of limit points of $B$, then $\vec{\gamma} \in [B]^{R \uparrow}$ iff $\vec{\gamma}$ respects $R$ and for each $r \in \dom(R)$, $\gamma_r \in j^{R_{\tree}(r)}(B')$.
\end{enumerate}

Define $C^{*}_{2m+3} = \set{\xi < \bolddelta{2m+3}}{\text{for any finite level $\leq 2n+2$ tree }Q, j^Q_{\sup}(\xi) = \xi}$. By (\ref{item:local_definability_jQ}:$m$),  $C^{*}\cap \kappa_{2m+3}^x$ has order type $\kappa_{2m+3}^x$, and hence $C^{*}$ has order type $\bolddelta{2m+3}$. Suppose $R$ is a level-($2m+3$) tree. $\vec{\gamma}$ \emph{strongly respects} $R$ iff $\vec{\gamma}\in [C^{*}]^{R \uparrow}$. 
The function $\mathbf{A} \mapsto \corner{\mathbf{A}}$ is defined exactly as in  \cite{sharpII}. So are the relations $\mathbf{A} \prec^R \mathbf{A}'$, $\mathbf{r} \prec^R \mathbf{A}$, $r \prec^R \mathbf{A}$, etc.\  for $\mathbf{r} \in \exdesc(R)$,  $r \in \dom(R)$. Define $\prec^R_{*} = \prec \res \exexdesc(R)$, $\sim^R_{*} = \sim \res \exexdesc(R)$.  If $\mathbf{A} = (\mathbf{r}, \pi, T)$ and $\vec{\gamma}$ respects $R$, let $\gamma_{\mathbf{A}} = \pi^T(\gamma_{\mathbf{r}})$. 
Assume by induction that: 
\begin{enumerate}[resume*=ind]
\item\label{item:R_description} Suppose $R$ is a level-($2m+3$) tree,  $\mathbf{A}, \mathbf{A}' \in \exdesc(R)$,  $\vec{\gamma}$ strongly respects $R$.  Then ${\mathbf{A}} \prec^R{\mathbf{A}}'$ iff $\gamma_{\mathbf{A}} < \gamma_{\mathbf{A}'}$; ${\mathbf{A}} \sim^R{\mathbf{A}}'$ iff $\gamma_{\mathbf{A}} = \gamma_{\mathbf{A}'}$.
\end{enumerate}

Suppose now $R$ is a finite level $\leq 2m+3$ tree. Then $\rep(R) = \cup_d \se{d} \times \rep(\comp{d}{R})$. 
Suppose $\vec{B} = (B_i)_{i \leq m+1} \in \mathbb{L}[T_{2m+3}]$. Define $  f \in \vec{B}^{R \uparrow}$
iff $f \in \mathbb{L}[T_{2m+3}]$ is an order preserving function from $\rep(R)$ to $\cup_i B_i$ such that for any $i$, $\ran(\comp{i}{f}) \subseteq B_i$. Define $  [B]^{R \uparrow} = \set{ [f]^R}{ f \in B^{R \uparrow}}.
$ $\vec{\alpha} =(\alpha_{(d,r)})_{(d,r) \in \dom(R)} =  (\comp{d}{\alpha}_r)_{r \in \dom(R)}$ \emph{respects} $R$ iff $\vec{\alpha} \in [(\bolddelta{2i+1})_{i \leq n}]^{R \uparrow}$. 

Suppose $Y$ is a level $\leq 2m+3$ tree, $T$ is a level $\leq 2m+2$ tree, and $\mathbf{B} =(d, \mathbf{y},\pi)\in \desc(Y,T,*)$, $F \in ((\bolddelta{2i+1})_{i \leq m+1})^{Y \uparrow}$. Then 
\begin{displaymath}
  F^T_{\mathbf{B}} : [\omega_1]^{T \uparrow} \to \bolddelta{3}
\end{displaymath}
is the function that sends $\vec{\xi}$ to $j^Q(\comp{d}{F}_{\mathbf{y}}) \circ \id^{T,Q}_{\pi} (\vec{\xi})$, or equivalently, sends
$[h]^T$ to $[\comp{d}{F}_{\mathbf{y}} \circ h_{\pi}^{Q}]_{\mu^{Q}}$. 
\begin{displaymath}
  \id^{Y,T}_{\mathbf{B}}
\end{displaymath}
is the function $[F]^Y \mapsto [F^T_{\mathbf{B}}]_{\mu^T}$, or equivalently, $\vec{\gamma} \mapsto \pi^{T,Q}(\gamma_{\mathbf{y}})$. 
\begin{displaymath}
  \seed^{Y,T}_{\mathbf{B}}
\end{displaymath}
is the element represented modulo $\mu^Y$ by $\id^{Y,T}_{\mathbf{B}}$. 
 We assume by induction that:
 \begin{enumerate}[resume*=ind]
 \item\label{item:YT_desc_order} Suppose $Y$ is a finite level $\leq 2m+3$ tree, $T,Q$  are finite level $\leq 2m+2$ trees, and suppose $\mathbf{B} , \mathbf{B}' \in \desc(Y,T,Q)$, $\mathbf{B} \prec^{Y,T} \mathbf{B}'$,  $\mathbf{B} =  (d,\mathbf{y},\pi) $, $\mathbf{B}' = (d', \mathbf{y}', \pi') $. Then for any  $F \in ((\delta^1_{2i+1})_{i \leq m+1})^{Y \uparrow}$, for any $h \in ((\bolddelta{2i+1})_{i \leq m})^{T \uparrow}$, for any $\vec{\beta} \in [((\bolddelta{2i+1})_{i < m})]^{Q \uparrow}$,
   \begin{displaymath}
{F}_{\mathbf{y}} \circ h^Q_{\pi} (\vec{\beta}) <  \comp{d'}{F}_{\mathbf{y}'} \circ h^Q_{\pi'} (\vec{\beta}) .
\end{displaymath}
 \item\label{item:YT_desc_target_extension} Suppose $Y$ is a finite level $\leq 2m+3$ tree, $T, T'$ are finite level $\leq 2m+2$ trees, $T$ is a proper subtree of $T'$, $\mathbf{B} = (\mathbf{y}, \pi) \in \desc(Y,T,Q)$, $\mathbf{B}' = (\mathbf{y}',\pi') \in \desc(Y',T', Q')$, $\mathbf{B}= \mathbf{B}' \res (Y,T)$. Suppose $\vec{E} = (E_i)_{i \leq m} \in \prod_{i \leq m} \nu_{2i+1}$, each $E_i$ is a club, $\eta \in E_i'$ iff $E_i \cap \eta$ has order type $\eta$, $\vec{E}' = (E_i)_{i \leq m}$. Then for any $F \in ((\delta^1_{2i+1})_{i \leq m+1})^{Y \uparrow}$, for any $\vec{\xi} \in [\vec{E}']^{T \uparrow}$,
   \begin{displaymath}
     j^{Q,Q'} \circ F^T_{\mathbf{B}} (\vec{\xi}) = \sup \set{ F^{T'}_{\mathbf{B}'} (\vec{\eta}) }{\vec{\eta} \in [E]^{T' \uparrow}, \vec{\eta} \text{ extends $\vec{\xi}$}}.
   \end{displaymath}
 \item\label{item:YT_desc_origin_extension}  Suppose $Y, Y'$ are finite level $\leq 2m+3$ trees, $Y$ is a proper subtree of $Y'$, $T$ is a finite level $\leq 2m+2$ tree and $\mathbf{B} \in \desc(Y,T,*)$, $\mathbf{B}' \in \desc(Y',T, *)$, $\mathbf{B} = \mathbf{B}' \res (Y,T)$. Suppose $\vec{E} = (E_i)_{i \leq m+1} \in \prod_{i \leq m+1} \nu_{2i+1}$, each $E_i$ is a club, $\eta \in E'_i$ iff $E_i \cap \eta$ has order type $\eta$, $\vec{E}' = (E'_i)_{i \leq m+1}$. Then for any $\vec{\gamma} \in [{\vec{E}}']^{Y \uparrow}$,
   \begin{displaymath}
     \id^{Y,T}_{\mathbf{B}}(\vec{\gamma})  =  \sup \set{ \id^{Y',T}_{\mathbf{B}'} (\vec{\delta})}{ \vec{\delta} \in [\vec{E}]^{Y' \uparrow}, \vec{\delta} \text{ extends } \vec{\gamma}}. 
   \end{displaymath}
\end{enumerate}

Suppose $R$ is a level $\leq 2m+3$ tree and $\rho$ factors $(R,Y,T)$. For $F \in ((\delta^1_{2i+1})_{i \leq m+1})^{Y \uparrow}$, let
\begin{displaymath}
  F_{\rho}^Y
\end{displaymath}
be the function sending $\vec{\gamma}$ to $(F^T_{\rho(d,r)}(\vec{\gamma}))_{(d,r) \in \dom(R)}$. If $T$ is finite, let
\begin{displaymath}
  \id^{Y,T}_{\rho}
\end{displaymath}
be the map sending $[F]^Y$ to $[F^T_{\rho}]_{\mu^T}$. So $\id^{Y,T}_{\rho}(\vec{\gamma}) = (\id^{Y,T}_{\rho(d,r)}(\vec{\gamma}))_{(d,r) \in \dom(R)}$. 

Suppose $R$ is $\Pi^1_{2m+3}$-wellfounded. 
Put $\llbracket 2m+3, \emptyset \rrbracket_R = \ot(<^R)$.
For $(d,\mathbf{r}) \in \exexdesc(R)$,  $\mathbf{r}=(r,Q, \overrightarrow{(d,q,P)})$, put
\begin{displaymath}
 \llbracket d, \mathbf{r} \rrbracket_{R} = [\vec{\beta} \mapsto \wocode{d, \vec{\beta}\oplus_R r}_{<^R}]_{\mu^Q}.
\end{displaymath}
If $(d,\mathbf{r}) \in \desc(R)$ is of discontinuous type, put $\llbracket d,r \rrbracket_{R} = \llbracket d, \mathbf{r} \rrbracket_{R}$. We say that $\rho$ \emph{minimally factors} $(R,Y)$ iff $\rho$ factors $(R,Y)$, $R,Y$ are both $\Pi^1_{2m+3}$-wellfounded and $\llbracket d, r \rrbracket_R = \llbracket \rho(d,r) \rrbracket_Y$ for any $r \in \dom(R)$.
Assume the induction hypothesis:
\begin{enumerate}[resume*=ind]
\item \label{item:order_type_embed_another} Suppose $R,Y$ are $\Pi^1_{2m+3}$-wellfounded level $\leq 2m+3$ trees and $\llbracket 2m+3, \emptyset \rrbracket_R \leq \llbracket 2m+3, \emptyset \rrbracket_Y$. Then there exist a $\Pi^1_{2m+2}$-wellfounded level $\leq 2m+2$ tree $T$ and a map $\rho$ minimally factoring $(R,Y \otimes T)$. If $\llbracket 2m+3, \emptyset \rrbracket_R < \llbracket 2m+3, \emptyset \rrbracket_Y$, we can further assume that for some $\mathbf{B} \in \desc(Y,T,*)$ we have $\lh(\mathbf{B}) = 1$ and $\llbracket 2m+3,  \emptyset \rrbracket_R = \llbracket \mathbf{B} \rrbracket_Y$. 
\end{enumerate}

If $R$ is a level-($2m+3$) tree, then $\llbracket \mathbf{r} \rrbracket_R = \llbracket 2m+3 \rrbracket_{R'}$ and $\llbracket r \rrbracket_R = \llbracket 2m+3 \rrbracket_{R'}$, where $R' = Q^{(2m+2)}_0 \oplus R$. If $R,Y$ are level-($2m+3$) trees, then $\rho$ \emph{minimally factors} $(R,Y)$ iff $\rho$ extends to $\rho'$ which minimally factors $(Q^{(2m+2)}_0 \oplus R, Q^{(2m+2)}_0 \oplus Y)$. 

Suppose $R$ is a level-$(2m+3)$ tree and $\dom(R) = \se{((0))}$, $R_{\node}(((0))) = (d, q)$, $Q$ is a completion of $R(((0)))$. We say that  \emph{$\epsilon$ is $Q$-represented by $T$} iff $Q$ is a subtree of $T$ and $\llbracket d,q \rrbracket_T = \epsilon$. 
Suppose $Q$ is a finite level $\leq 2m+2$ tree and $\vec{\epsilon} = (\comp{d}{\epsilon}_t)_{(d,t) \in \dom(Q)}$ is a tuple of ordinals indexed by $\dom(Q)$. We say that \emph{$\vec{\epsilon}$ is represented by $Q'$} iff $Q$ is a subtree of $Q'$, $Q'$ is $\Pi^1_{2m+2}$-wellfounded and $\vec{\epsilon} =( \llbracket d,t \rrbracket_{Q'})_{(d,t) \in \dom(Q')}$. Similarly define a tuple $\vec{\epsilon}  = (\epsilon_r)_{r \in \dom(R)}$ being represented by a level-($2m+3$) tree $R$.
Assume by induction that:
\begin{enumerate}[resume*=ind]
\item \label{item:represent_cofinal} Suppose $R$ is a level-$(2m+3)$ tree and $\dom(R) = \se{((0))}$, $R(((0))) =(Q^{(2m+2)}_0,(d, q, P))$, $2k+1\leq d<2k+3$, $Q$ is a completion of $R(((0)))$. Then cofinally many ordinals in $j^P (\bolddelta{2k+1})$ are $Q$-represented by some level $\leq 2m+2$ tree. 
\item\label{item:represent} Suppose $Q$ is a finite level $\leq 2m+2$ tree and $\vec{\epsilon}$ respects $Q$. Then there is $Q'$ extending $Q$ such that $\vec{\epsilon}$ is represented by $Q'$.
 \item \label{item:represent_another} Suppose $R$ is a finite level-$(2m+3)$ tree and $\vec{\epsilon}$ respects $R$. Then there is $R'$ extending $R$ such that $\vec{\epsilon}$ is represented by $R'$. 
\end{enumerate}

To every ordinal $\xi < E(2m+3)$, we assign $\widehat{\xi}$ as follows:
\begin{enumerate}
\item If $\xi < E(2m+1)$, then $\widehat{\xi}$ has been defined by induction.
  \item If $0 < \eta = \omega^{E(2m)+\eta_1}+ \dots + \omega^{E(2m)+\eta_k} + \chi  < E(2m+2)$, $E(2m+1)>\eta_1 \geq \dots \geq \eta_k $, $E(2m+1)>\chi$, then $\widehat{\omega^{\eta}} = u^{(2m+1)}_{\eta_1+1} \cdot \dots \cdot u^{(2m+1)}_{\eta_k+1} \cdot \widehat{\xi}$.
  \item If $0 < \xi = \omega^{\eta_1}+ \dots + \omega^{\eta_k}$, $E(2m+2) > \eta_1\geq \dots \geq \eta_k$, then $\widehat{\xi} = \widehat{\omega^{\eta_1}}+\dots+ \widehat{\omega^{\eta_k}}$.
\end{enumerate}

Let $R^{(2m+3)}_{E(2m+3)}$ be the unique (up to an isomorphism) level-($2m+3$) tree such that
\begin{enumerate}
\item  for any finite level-($2m+3$) tree $Y$, there exists $\rho$ which minimally factors $(Y,R^{(2m+3)}_{E(2m+3)})$;
\item if $r \in \dom(R^{(2m+3)}_{E(2m+3)})$ then there exist a finite $Y$ and $\rho$ which minimally factors $(Y,R)$ such that $r \in \dom(\rho)$.
\end{enumerate}
We fix the following representation of $R^{(2m+3)}_{E(2m+3)}$, whose domain consists of finite tuples of ordinals in $E(2m+3)$:
\begin{enumerate}
\item $(\xi_1) \in \dom(R^{(2m+3)}_{E(2m+3)})$ iff $0<\xi_1< E(2m+3)$. $R^{(2m+3)}_{E(2m+3)}((\xi_1))$ is the $Q^{(2m)}_0$-partial level $\leq 2m+2$ tree induced by $\widehat{\xi_1}$. 
\item If $r = (\xi_1,\dots,\xi_{k-1}) \in \dom(R^{(2m+3)}_{E(2m+3)})$, then $r \concat (\xi_k) \in \dom(R^{(2m+3)}_{E(2m+3)})$ iff ${\xi_k} < E(2m+3)$ and there exists a completion $Q^{+}$ of $R^{(2m+3)}_{E(2m+3)}(r)$ such that 
the $Q^{+}$-approximation sequence of $\widehat{\xi_k}$ is $(\widehat{\xi_i})_{1 \leq i \leq k}$;  if $r \concat(\xi_k) \in \dom(R^{(2m+3)}_{E(2m+3)})$ and $Q^{+}$ is the unique such completion, then $R^{(2m+3)}_{E(2m+3)}(r \concat (\xi_k))$ is the $Q^{+}$-partial level $\leq 2m+2$ tree induced by $\widehat{\xi_k}$. 
\end{enumerate}
Therefore, $\llbracket \emptyset \rrbracket_R = u^{(2m+1)}_{E(2m+1)}$ and if $r=(\xi_1,\dots,\xi_k) \in \dom(R^{(2m+3)}_{E(2m+3)})$, then $\llbracket r \rrbracket_R = \widehat{\xi_k}$. If $Y$ is a finite level-3 tree, then the map $y \mapsto r_y$ minimally factors $(Y,R)$, where if $(\llbracket y \res i \rrbracket_Y) _{1 \leq i \leq \lh(y)}=  (\widehat{\xi_1},\dots, \widehat{\xi_{\lh(y)}})$ then $r_y = (\xi_1,\dots,\xi_{\lh(y)})$. For $0 < \xi < E(2m+3)$, let $R^{(2m+3)}_{\xi} = R^{(2m+3)}_{E(2m+3)} \res (\xi) $.

 \section{The level-($2n+1$) sharp}
\label{sec:induction-under-pi_det}

From now on, we assume $\boldpi{2n+1}$-determinacy. This is equivalent to ``$\forall x \in \mathbb{R}~$ there is an $(\omega_1,\omega_1)$-iterable $M_{2n}^{\#}$ by Neeman \cite{nee_opt_I,nee_opt_II} and Woodin \cite{hod_as_a_core_model,woodin-handbook,SUW}.'' We shall show the induction hypotheses (\ref{item:subset_does_not_increase_0}:$n$)-(\ref{item:q_respecting}:$n$).

(\ref{item:subset_does_not_increase_0}:$n$) follows from Steel's computation of $L_{\bolddelta{2n+1}}[T_{2n+1}] = M_{2n,\infty}^{-}$.
We proceed with the definition of the operator $x \mapsto x^{(2n+1)\#}$. It will be basically copying the arguments in  \cite{sharpII}.

\subsection{The equivalence of $M_{2n}^{\#}$ and $0^{(2n+1)\#}$}
\label{sec:sharp_existence}
Suppose
$C \subseteq \bolddelta{2n+1}$. $C$ is \emph{firm} iff every member of $C$ is additively closed, the set $\set{\xi }{\xi = \ot(C \cap \xi)}$ has order type $\bolddelta{2n+1}$ and $C \cap \xi \in \mathbb{L}_{\bolddelta{2n+1}}[T_{2n}]$ for all $\xi < \bolddelta{2n+1}$. $C$ is a set of \emph{potential level-($2n+1$) indiscernibles for $M_{2n,\infty}^{-}$} iff for any level-($2n+1$) tree $R$, for any $F, G \in C^{R\uparrow} \cap \mathbb{L}_{\bolddelta{2n+1}}[T_{2n}]$,
\begin{displaymath}
  (M_{2n}^{-}; [F]^R) \equiv   (M_{2n}^{-}; [G]^R) .
\end{displaymath}

Say that $\delta$ is an $M_{2n-2}$-Woodin cardinal iff $M_{2n-2}(V_{\delta})\models ``\delta$ is Woodin''. By a theorem of Woodin, putting $\kappa = u^{(2n-1)}_{E(2n-1)}$, then
$M_{2n-2,\infty}^{-} (x)\models ``\kappa$ is the least $M_{2n-2}$-Woodin cardinal''.
By $\boldpi{2n+1}$-determinacy, if $C$ is the set of $M_{2n-2}$-Woodin cardinals in $M_{2n}^{-}(M_{2n}^{\#})$ and their limits, then $C$ is a firm set of potential level-($2n+1$) indiscernibles for $M^{-}_{2n,\infty}$. We define
\begin{displaymath}
  0^{(2n+1)\#}
\end{displaymath}
to be a map sending a level-($2n+1$) tree $R$ to $0^{(2n+1)\#}(R)$, where $\gcode{\varphi} \in 0^{(2n+1)\#}(R)$ iff $\varphi$ is an $\mathcal{L}^R$-formula and for all $\vec{\gamma} \in [C]^{R\uparrow}$,
\begin{displaymath}
  (M_{2n,\infty}^{-};\vec{\gamma}) \models \varphi.
\end{displaymath}
 If $R$ is a finite level-($2n+1$) tree, we have:
  \begin{enumerate}
  \item $0^{(2n+1)\#}(R)$ is a $\game((\llbracket \emptyset \rrbracket_R+\omega)\text{-}\Pi^1_{2n+1})$ real, and
  \item the universal $\game(\llbracket \emptyset \rrbracket_R\text{-}\Pi^1_{2n+1})$ real is many-one reducible to $0^{(2n+1)\#}(R)$, uniformly in $R$. 
  \end{enumerate}
Hence, by induction hypotheses (\ref{item:bk_km}:$n-1$)(\ref{item:evensharp}:$n-1$), $0^{(2n+1)\#} \equiv_m M_{2n}^{\#}$. Relativizing to any real,
\begin{displaymath}
  x^{(2n+1)\#} \equiv_m M_{2n}^{\#}(x), \text{ uniformly in } x.
\end{displaymath}
This verifies (\ref{item:odd_sharp}:$n$).

\subsection{Syntactical properties of $0^{(2n+1)\#}$}
\label{sec:synt-prop}

By (\ref{item:bk_km}:$n-1$)(\ref{item:local_definability_jQ}:$n-1$), if $Q,T$ are finite level $\leq 2n$ trees and $\pi$ factors $(Q,T)$, then $j^Q \res \bolddelta{2n+1}$ and $\pi^T \res \bolddelta{2n+1}$ are $\Sigma^1_{2n+2}$ in the codes. We make the following informal symbols that will occur in a level-$(2n+1)$ EM blueprint:
\begin{enumerate}
\item If $Q$ is a finite level $\leq 2n$ tree,  $\underline{j^Q}( a ) = b$ iff for any $\xi $ cardinal cutpoint such that $\se{a,b} \in K|\xi$, the $\coll(\omega,\xi)$-generic extension satisfies $j^Q(\pi_{K|\xi, \infty} ( a) )= \pi_{K|\xi, \infty}(b) $.
\item If $\pi$ factors finite level $\leq 2n$ trees $(X, T)$,  $\underline{\pi^T}(a) = b$ iff for any $\xi $ cardinal cutpoint such that $\se{a,b} \in K|\xi$, the $\coll(\omega,\xi)$-generic extension satisfies $\pi^T(\pi_{K|\xi, \infty} ( a) )=  \pi_{K|\xi, \infty}(b)  $.
\item If $Q$ is a level $\leq 2n$ subtree of $Q'$, $Q'$ is finite, then $\underline{j^{Q,Q'}} = \underline{(\id_Q)^{Q,Q'}}$.
\item Corresponding to items 1-3, $  \underline{j^Q_{\sup}}, \underline{\pi^{T}_{\sup}}, \underline{j^{Q,Q'}_{\sup}}$ stand for functions on ordinals that send $\alpha$ to $\sup (\underline{j^Q})''\alpha$, $\sup (\underline{\pi^T})'' \alpha$, $\sup (\underline{j^{Q,Q'}})'' \alpha$ respectively.
\item If $k$ is a definable class function and $W$ is a definable class, then $k(W) = \bigcup \set{k(W \cap V_{\alpha})}{\alpha \in \ord}$. 
\item If  $X , T, T'$ are finite level $\leq 2n$ trees,  $T$ is a subtree of $T'$, $a \in \underline{j^X}(V)$, $R$ is a level-($2n+1$) tree, $\dom(R) = \se{((0))}$,   then 
  \begin{enumerate}
  \item $\underline{B^T_{X, a}} = \{\underline{\pi^{T \otimes Q}} (a) : Q$ finite level $\leq 2n$ tree,  $\pi $ factors $(X,T \otimes Q)\}$;
  \item $\underline{H^T_{X, a}}$ is the transitive collapse of the Skolem hull of $\underline{B^T_{X, a}} \cup \ran(\underline{j^T})$ in $\underline{j^T}(V)$ and  $\underline{\phi^T_{X, a}}: \underline{H^T_{X, a}} \to \underline{j^T}(V)$ is the associated elementary embedding;
  \item $\underline{j^T_{X, a}} = (\underline{\phi^T_{X, a}})^{-1} \circ \underline{j^T}$;
  \item $\underline{j^{T,T'}_{X, a}} = (\underline{\phi^{T'}_{X, a}})^{-1} \circ \underline{j^{T,T'}} \circ \underline{\phi^T_{X, a}}$;
  \item $\underline{B^T_{R,a}} = \underline{B^T_{Q^{(2n)}_0,a}} \cup \big( \bigcup \{ \underline{B^T_Q}: Q \text{ is a completion of } R(((0))))\}\big)$.
  \item  $\underline{H^T_{R,a}}$ is the transitive collapse of the Skolem hull of $\underline{B^T_{R,a}} \cup \ran( \underline{j^T} )$ in $\underline{j^T} (V) $ and $\underline{\phi^T_{R, a}}: \underline{H^T_{R, a}} \to \underline{j^T}(V)$ is the associated elementary embedding;
  \item $\underline{j^T_{R, a}} = (\underline{\phi^T_{R, a}})^{-1} \circ \underline{j^T}$;
  \item $\underline{j^{T,T'}_{R, a}} = (\underline{\phi^{T'}_{R, a}})^{-1} \circ \underline{j^{T,T'}} \circ \underline{\phi^T_{R, a}}$.
 \end{enumerate}
\item Suppose $R$ is a level-($2n+1$) tree. 
  \begin{enumerate}
  \item 
If $\mathbf{r} = (r,Q, \overrightarrow{(d,q,P)}) \in \exdesc(R)$, $\underline{c_{\mathbf{r}}}$ is the informal $\mathcal{L}^{R}$-symbol whose interpretation is
 \begin{displaymath}
   \underline{c_{\mathbf{r}}} =
   \begin{cases}
\underline{j_{\sup}^{R_{\tree}(r^{-}),Q}} (\underline{c_{r^{-}}}) & \text{if }  \mathbf{r} \in \desc(R) \text{ of continuous type} ,\\
\underline{c_r} & \text{if } \mathbf{r} \in \desc(R) \text{ of discontinuous type} ,\\
\underline{j^{R_{\tree}(r), Q}}(\underline{c_r}) & \text{if } \mathbf{r} \notin \desc(R).
   \end{cases}
 \end{displaymath}
\item If  $T,U$ are finite level $\leq 2n$ trees and $\mathbf{B} = (\mathbf{r}, \pi) \in \desc(R, T, U)$, $\mathbf{r} \neq \emptyset$, then $  \underline{c_{\mathbf{B}}^T}$ is the informal $\mathcal{L}^{R}$-symbol which stands for $\underline{\pi^{T,U}} ( \underline{c_{\mathbf{r}}})$.
\item If $\mathbf{A} = (\mathbf{r}, \pi, T) \in \exexdesc(R)$, $\mathbf{r} \neq \emptyset$, then $\underline{c_{\mathbf{A}}}$ is the informal $\mathcal{L}^R$-symbol which stands for $\underline{\pi^T}(\underline{c_{\mathbf{r}}})$.
  \end{enumerate}
\end{enumerate}

\begin{definition}\label{def:EM}
  A \emph{pre-level-($2n+1$)} blueprint is a function $\Gamma$ sending any finite level-($2n+1$) tree $Y$ to a complete consistent $\mathcal{L}^Y$-theory $\Gamma(Y)$ which contains all of the following axioms:
  \begin{enumerate}
  \item\label{item:EM_ZFC_n} $\ZFC+$ there is no inner model with $2n$ Woodin cardinals $+ V=K +$ there is no strong cardinal $+ V$ is closed under the $M_{2n-1}^{\#}$-operator.
  \item \label{item:EM_XTQ_factor_n}
    \label{item:EM_commutativity_n} 
    Suppose $X,T, Q,Z$ are finite level $\leq 2n$ trees, $\pi$ factors $(X,T)$, $\psi$ factors $(T,Z)$.
    \begin{enumerate}
    \item $\underline{j^T}: V \to \underline{j^T}(V)$ is $\mathcal{L}$-elementary. $\underline{j^{Q^{(2n)}_0}} $ is the identity map on $V$.
    \item $\underline{\pi^T}: \underline{j^X}(V) \to \underline{j^T}(V)$ is $\mathcal{L}$-elementary. $\underline{j^{Q^{(2n)}_0,T}} = \underline{j^T}$. $\underline{j^{T,T}}$ is the identity map on $\underline{j^T}(V)$.
    \item $\underline{(\psi \circ \pi)^Z} = \underline{\psi^Z} \circ \underline{\pi^T} $.
    \item $\underline{j^T} \circ \underline{j^Q} = \underline{j^{T \otimes Q}}$.
    \item $ \underline{j^Q} (\underline{\pi^T}) = \underline{(Q \otimes \pi)^{Q\otimes T}}$.
    \item $\underline{\pi^T} \res \underline{j^{X \otimes Q}}(V) = \underline{ (\pi \otimes Q)^{T \otimes Q}}$.
    \end{enumerate}
  \item \label{item:level-2-embedding_invariance_n} If $\xi$ is a cardinal and strong cutpoint, then $V^{\coll(\omega,\xi)}$ satisfies the following: If ${U}$ is a $\Pi^1_{2n}$-wellfounded level $\leq 2n$ tree, then $K|\xi$ and $(\underline{j^{{U}}})^K(K|\xi) $ are countable $\Pi^1_{2n+1}$-iterable mice and $(\underline{j^{{U}}})^K\res (K|\xi) $ is essentially an iteration map from $K|\xi$ to $(\underline{j^{{U}}})^K(K|\xi)$. Here $(\underline{j^{{U}}})^K$ stands for the direct limit map of $(\underline{j^{Z,Z'}})^K$ for $Z,Z'$ finite subtrees of $U$, $Z$ a finite subtree of $Z'$.
  \item \label{item:EM_c_t_are_ordinals_n} For any $y\in \dom(Y)$, ``$\underline{c_y} \in \ord$'' is an axiom.
  \item\label{item:EM_order_n} If $\mathbf{y} \prec^Y \mathbf{y}'$, then ``$\underline{c_{\mathbf{y}}} < \underline{c_{\mathbf{y}'}}$'' is an axiom; if $\mathbf{y} \sim^Y \mathbf{y}'$, then ``$\underline{c_{\mathbf{y}}} = \underline{c_{\mathbf{y}'}}$'' is an axiom.
  \end{enumerate}
  A level-($2n+1$) EM blueprint is a pre-level-($2n+1$) EM blueprint satisfying the \emph{coherency property}: if $R$ is a finite level-($2n+1$) tree, $Y,T$ are finite level $\leq 2n$ trees, $\rho$ factors $(R,Y,T)$, then for each $\mathcal{L}$-formula $\varphi(v_1,\ldots,v_n)$, for each $r_1,\ldots,r_n \in \dom(R)$,
  \begin{displaymath}
    \gcode{\varphi(\underline{c_{r_1}},\ldots,\underline{c_{r_n}})} \in \Gamma(R)
  \end{displaymath}
  iff
  \begin{displaymath}
    \gcode{\underline{j^T}(V) \models  \varphi (\underline{c_{\rho(r_1)}^T},\ldots,\underline{c_{\rho(r_n)}^T})}\in \Gamma(Y).
  \end{displaymath}
\end{definition}

If $\Gamma$ is a level-($2n+1$) EM blueprint and $Y$ is a level-($2n+1$) tree, the EM model $\mathcal{M}_{\Gamma,Y}$ is defined. If $\rho$ factors $(R,Y)$, then $\rho^Y_{\Gamma}$ is defined. If $T$ is a level $\leq 2n$ tree and $\rho$ factors $(R,Y,T)$, the notations $\mathcal{M}_{\Gamma,Y}^T$, $j_{\Gamma,Y}^T$, $\rho^{Y,T}_{\Gamma}$, etc.\ are defined.

A level-($2n+1$) EM blueprint is \emph{iterable} iff for any $\Pi^1_{2n+1}$-wellfounded level-($2n+1$) tree $Y$, $\mathcal{M}_{\Gamma,Y}$ is a $\Pi^1_{2n+1}$-iterable mouse. Unboundedness, weak remarkability and level $\leq 2n$ correctness are defined as in  \cite{sharpII}, only replacing every occurrence of ``level-3 tree'', ``level $\leq 2$ tree'', ``$\Pi^1_2$-wellfounded'' by ``level-($2n+1$) tree'', ``level $\leq 2n$ tree'', ``$\Pi^1_{2n}$-wellfounded'' respectively. 
If $\Gamma$ is a weakly remarkable level-($2n+1$) EM blueprint, for a level-($2n+1$) tree, the full model $\mathcal{M}^{*}_{\Gamma,Y}$ is defined. The notations $\mathcal{M}^{*,T}_{\Gamma,Y}$, $\rho^{*,Y}_{\Gamma}$, etc.\  carry over. For $\gamma$ respecting a level $\leq 2n$ tree $Q$, $c_{\Gamma, Q, \gamma}$ is defined. Define $c_{\Gamma,\gamma} = c_{\Gamma,Q^{(2n)}_0, \gamma}$ for any limit ordinal $\gamma < \bolddelta{2n+1}$. 
$\Gamma$ is \emph{remarkable} iff $\Gamma$ is weakly remarkable and
\begin{enumerate}
\item If $R$ is a level-($2n+1$) tree, $\dom(R) = \se{((0))}$, $R(((0)))$ is of degree 0, then 
$\Gamma(R)$ contains the axiom ``$\underline{c_{((0))}}$ is not measurable''.
  \item  If $R$ is a level-($2n+1$) tree, $\dom(R) = \se{((0))}$, $R(((0))) = (Q^{(2n)}_0, (d,q,P))$, then
$\Gamma(R)$ contains the following axiom:  
      if $\xi$ is a cardinal and strong cutpoint, $c = \underline{c_{((0))}}$, $b^Q = (\underline{\phi^Q_{R , c}})^{-1}(c)$ for $Q$ a completion of $R(((0)))$, then $V^{\coll(\omega,\xi)}$ satisfies the following: 
     \begin{enumerate}
\item  If $Q$ is a completion of $R(((0)))$ and 
 $\alpha$ is $Q$-represented by both $T$ and $T'$, then 
$(  (\underline{j^T_{R,c}})^K(K|\xi) ,  (\underline{j^{Q,T}_{R,c}})^K (b^Q)  ) \sim_{DJ} (  (\underline{j^{T'}_{R,c}})^K(K|\xi) , ( \underline{j^{Q,{T'}}_{R,c}})^K (b^Q)  ) $. 
\item 
 Let $F({\alpha}) = \pi_{(\underline{j^T_{R,c}})^K(K|\xi), \infty}( ( \underline{j^{Q,T}_{R,c}})^K  (b^Q)) $ for ${\alpha}$ $Q$-represented by $T$ and $Q$ a completion of $R(((0)))$. Then $\sup_{\alpha < j^P ( \bolddelta{2k+1} )} F(\alpha) = \pi_{K|\xi , \infty }(c)$ where $2k+1 \leq d < 2k+3$.  
 \end{enumerate}
\end{enumerate}

By appealing to (\ref{item:represent_another}:$n-1$), if $\Gamma$ is an iterable, weakly remarkable level-($2n+1$) EM blueprint, then:
\begin{enumerate}
\item The following are equivalent:
  \begin{enumerate}
  \item $\Gamma$ is remarkable.
  \item The map $\gamma \mapsto c_{\Gamma,\gamma}$ is continuous.
  \item If $R$ is a level-$(2n+1)$ tree with domain $\se{((0))}$, $R((0)) = (Q^{(2n)}_0,(d,q,P))$, $2k+1 \leq d < 2k+3$, then there is $\gamma_R$ such that $\cf^{\mathbb{L}_{\bolddelta{2n+3}}[T_{2n+2}]}(\gamma_R) = j^P(\bolddelta{2k+1})$ and
    \begin{displaymath}
      c_{\Gamma,\gamma_R} = \set{c_{\Gamma,\beta}} {\beta < \gamma_R}.
    \end{displaymath}
  \end{enumerate}
\item The following are equivalent:
\begin{enumerate}
\item $\Gamma$ is level $\leq 2n$ correct.
\item For any potential partial level $\leq 2n$ tower $(X,\overrightarrow{(e,x,W)})$ of continuous type, if $F \in ((\bolddelta{2i+1})_{i \leq n})^{(X, \overrightarrow{(e,x,W)}) \uparrow}$, then
  \begin{displaymath}
    c_{\Gamma,X,[F]_{\mu^X}} = [  \vec{\alpha} \mapsto c_{\Gamma,F(\vec{\alpha})}  ]_{\mu^X}.
  \end{displaymath}
\item For any potential partial level $\leq 2n$ tower $(X,\overrightarrow{(e,x,W)})$ of continuous type, there exists $F \in ((\bolddelta{2i+1})_{i \leq n})^{(X, \overrightarrow{(e,x,W)}) \uparrow}$ satisfying
  \begin{displaymath}
    c_{\Gamma,X,[F]_{\mu^X}} = [  \vec{\alpha} \mapsto c_{\Gamma,F(\vec{\alpha})}  ]_{\mu^X}.
  \end{displaymath}
\end{enumerate}
\end{enumerate}

$0^{(2n+1)\#}$ is the unique iterable, remarkable, level $\leq 2n$ correct level-($2n+1$) EM blueprint. $c^{(2n+1)}_{Q,\gamma}$, $c^{(2n+1)}_{\gamma}$ are defined as usual. $I^{(2n+3)}$ is the set of level-($2n+1$) indiscernibles for $M_{2n,\infty}^{-}$.  $0^{(2n+1)\#}$ contains the \emph{universality of level $\leq 2n$ ultrapowers axiom}: 
\begin{quote}
    If $\alpha$ is a limit ordinal and $\xi > \alpha$ is a cardinal and cutpoint, then $V^{\coll(\omega,\xi)}$ satisfies $\pi_{K | \xi, \infty} (\alpha) = \sup \{ \pi_{(\underline{j^T})^K (K | \xi),\infty} (\beta): T $ is $\Pi^1_{2n}$-wellfounded, $\beta < (\underline{j^T})^K (\alpha)\}$. 
\end{quote}

If $\mathcal{N}$ is a structure that satisfies Axioms~\ref{item:EM_ZFC_n}-\ref{item:level-2-embedding_invariance_n} in Definition~\ref{def:EM} and the universality of level $\leq 2n$ ultrapowers axiom, then
  \begin{displaymath}
    \mathcal{G}_{\mathcal{N}}
  \end{displaymath}
is the direct system consisting of models $\mathcal{N}^T$ for which $T$ is a $\Pi^1_{2n}$-wellfounded level $\leq 2n$ tree and maps $\pi^{T,T'}_{\mathcal{N}} : \mathcal{N}^T \to \mathcal{N}^{T'}$ for $\pi$ minimally factoring $T,T'$. Define
\begin{align*}
  \mathcal{N}_{\infty}& = \dirlim \mathcal{G}_{\mathcal{N}},\\
\pi_{\mathcal{N}^T,\mathcal{N}_{\infty}}&: \mathcal{N}^T \to \mathcal{N}_{\infty} \text{ is tail of the direct limit map.}
\end{align*}
If in addition, $\mathcal{N}$ is countable $\Pi^1_{2n+1}$-iterable mouse, then  $\mathcal{G}_{\mathcal{N}}$ is a dense subsystem of $\mathcal{I}_{\mathcal{N}}$, so there is no ambiguity in the notation $\mathcal{N}_{\infty}$.
If $Q$ is a finite level $\leq 2n$ tree, $a \in{\mathcal{N}}$, $\vec{\beta} = (\comp{d}{\beta}_x)_{(d,x) \in \dom(Q)}$ is represented by both $T$ and $T'$, then $\pi_{\mathcal{N}^T,\infty}  \circ j^{Q,T}_{\mathcal{N}}(a) =\pi_{\mathcal{N}^{T'},\infty}  \circ j^{Q,T'}_{\mathcal{N}}(a) $.  We can define 
\begin{displaymath}
  \pi_{\mathcal{N}, Q, \vec{\beta}, \infty} (a) = \pi_{\mathcal{N}^T, \infty} \circ j^{Q,T}_{\mathcal{N}} (a)
\end{displaymath}
for $\vec{\beta}$ represented by $T$. So
\begin{displaymath}
  \mathcal{N}_{\infty} = \set{\pi_{\mathcal{N},Q,\vec{\beta},\infty}(a)}{a \in \mathcal{N},Q \text{ finite level $\leq 2n$ tree}, \vec{\beta} \text{ respects } Q} .
\end{displaymath}

If $\vec{\gamma}$ respects a level-($2n+1$) tree $R$, define
\begin{displaymath}
  c_{\vec{\gamma}} = (c_{R_{\tree}(r),\gamma_r}^{(2n+1)})_{r \in \dom(R)}
\end{displaymath}
which strongly respects $R$. The general remarkability of $0^{(2n+1)\#}$ is as follows: 

\begin{quote}
  Suppose $\vec{\gamma}$ and $\vec{\gamma}'$ both respect a finite level-($2n+1$) tree $R$. Suppose $r \in \dom(R)$ and for any $s \prec^R r$, $\gamma_s = \gamma'_s$. Then for any $\mathcal{L}$-Skolem term $\tau$,
  \begin{displaymath}
    M_{2n,\infty}^{-} \models \tau(c_{\vec{\gamma}}) < c_{R_{\tree}(r), \gamma_r}^{(2n+1)} \to \tau(c_{\vec{\gamma}}) = \tau(c_{\vec{\gamma}'}).
  \end{displaymath}
\end{quote}

For any $c_{\omega}^{(2n+1)} <\xi \in I^{(2n+1)}$, there is an $\mathcal{L}$-Skolem term $\tau$ such that 
 $M_{2n,\infty}^{-}(0^{(2n+1)\#}) \models `` \tau(\sup(I^{(2n+1)} \cap \xi) , \cdot)$ is a surjection from $ \sup (I^{(2n+1)} \cap \xi)$ onto $\xi$''. For any $u^{(2n-1)}_{E(2n-1)} < \xi < c^{(2n+1)}_{\omega}$, there  is an $\mathcal{L}$-Skolem term $\tau$ such that 
 $M_{2n,\infty}^{-}(0^{(2n+1)\#}) \models `` \tau( u^{(2n-1)}_{E(2n-1)} , \cdot)$ is a surjection from $ u^{(2n-1)}_{E(2n-1)} $ onto $\xi$''. 
For notational convenience, if $X$ is a finite level $\leq 2n$ tree and $\gamma = [F]_{\mu^X}$ is a limit ordinal, define $ c^{(2n+1)} _{X, \gamma} = [\vec{\alpha} \mapsto c^{(2n+1)}_{F(\vec{\alpha})}]_{\mu^X}$; define $c^{(2n+1)}_{\emptyset, \bolddelta{2n+1}} = \bolddelta{2n+1}$. Ordinals of the form  $c^{(2n+1)}_{X, \gamma}$ when $ X \neq \emptyset$  are definable from elements in $ I^{(2n+1)}$ over $M_{2,\infty}^{-}$.
Define $\bar{I}^{(2n+1)}=$the closure of $I^{(2n+1)}$ under the order topology.
 Every ordinal in $\bar{I}^{(2n+1)}$ is of the form $c^{(2n+1)}_{X, \gamma}$ where $X$ is finite and $\gamma < \bolddelta{2n+1}$ is a limit. 
Thus, if $\mathbf{A} = (\mathbf{r}, \pi,T) \in \exexdesc(R)$ and $\vec{\gamma} $ strongly respects $R$, then $c^{(2n+1)}_{T, \gamma_{\mathbf{A}}} \in \bar{I}^{(2n+1)}$ and is a limit point of $I^{(2n+1)}$.

We define the level $\leq 2n$ indiscernibles below $u^{(2n-1)}_{E(2n-1)}$ as in the last section of \cite{sharpIII}. This leads to a closed-below-$u^{(2n-1)}_{E(2n-1)}$ set $I^{(\leq 2n)} \subseteq u^{(2n-1)}_{E(2n-1)}$, enumerated as $b^{(\leq 2n)}_{\beta}$ for $\beta <^{(2n-1)}_{E(2n-1)} $ in the increasing order. We get clubs $\vec{C} \in \prod_{i \leq n} \nu_i$ such that if $P$ is a finite level $\leq 2n-1$ tree and $\beta = [f]_{\mu^P} < u^{(2n-1)}_{E(2n-1)}$, then $b^{(\leq 2n)}_{\beta} = [\vec{\alpha} \mapsto b^{(\leq 2n)}_{f(\alpha)} ]_{\mu^P}$. As in the last section of \cite{sharpIII}, for any $\min(I^{\leq 2n}) <\xi \in I^{(\leq 2n)}$, there is an $\mathcal{L}$-Skolem term $\tau$ such that 
 $M_{2n,\infty}^{-}(0^{(2n+1)\#}) \models `` \tau(\sup(I^{(\leq 2n)} \cap \xi) , \cdot)$ is a surjection from $ \sup (I^{(\leq 2n)} \cap \xi)$ onto $\xi$''; for any $\xi < \min(I^{\leq 2n})$, there is an $\mathcal{L}$-Skolem term $\tau$ such that 
 $M_{2n,\infty}^{-}(0^{(2n+1)\#}) \models `` \tau( \cdot)$ is a surjection from $ \omega$ onto $\xi$''.

All the above notions relativize to an arbitrary real. The relevant notations are $c^{(2n+1)}_{x,\gamma}$, $c^{(2n+1)}_{x,Q, \gamma}$, $I^{(2n+1)}_x$, etc. 
If $x \in \mathbb{R}$, a \emph{level $\leq 2n+1$ code for an ordinal in $\bolddelta{2n+1}$ relative to $x$} is of the form
 \begin{displaymath}
  (x,R, \vec{\gamma}, Q, \vec{\beta}, 
  \gcode{\sigma})
\end{displaymath}
such that $R$ is a finite level-($2n+1$) tree, $\vec{\gamma}$ respects $R$, $Q$ is a finite level $\leq 2n$ tree, $\vec{\beta}$
respects $Q$, and  $\sigma$ is an $\mathcal{L}^{\underline{x}, R}$-Skolem term for an ordinal. It codes the ordinal
\begin{displaymath}
      \sharpcode{ (x,R, \vec{\gamma}, Q, \vec{\beta}, \gcode{\sigma})}  =
  \pi_{\mathcal{M}^{*}_{x^{(2n+1)\#},R}, Q, \vec{\beta}, \infty}    
(
 \sigma^{\mathcal{M}^{*}_{x^{(2n+1)\#},R}} ( (\underline{c_r})_{r \in \dom(R)})).
\end{displaymath}
For any $x$, every ordinal in $\bolddelta{2n+1}$ has a level $\leq 2n+1$ code relative to $x$.

\subsection{Level-($2n+1$) uniform indiscernibles}
\label{sec:level-2n+1-indisc}

Suppose $R$ is a finite level-($2n+1$) tree,  $\tau$ is an $\mathcal{L}$-Skolem term, $\vec{\gamma} $ strongly respects $R$. Suppose $\mathbf{A} = (\mathbf{r}, \pi, T)\in \exexdesc(R)$. Then
\begin{displaymath}
        \tau^{M_{2,\infty}^{-}} (c_{\vec{\gamma}}) <  c^{(2n+1)}_{T, \gamma_{\mathbf{A}}} \to  \tau^{M_{2,\infty}^{-}} (c^{(2n+1)}_{\vec{\gamma}}) < \min (  I^{(2n+1)}  \setminus \sup
\set{c^{(2n+1)}_{T', \gamma_{\mathbf{A}'}}}{\mathbf{A}' \prec^R_{*} \mathbf{A}}).
\end{displaymath}
  Suppose $R$ is a finite level-($2n+1$) tree and $\mathbf{A}  = (\mathbf{r}, \pi, T)\in \exexdesc(R)$, $\mathbf{r} \neq \emptyset$, $\vec{\gamma}$ strongly respects $R$. Then $c^{(2n+1)}_{T,\gamma_{\mathbf{A}}}$ is a cardinal in $M_{2n,\infty}^{-}$.

The theory of level-($2n+1$) sharps imply (\ref{item:muP_is_measure}:$n$). (\ref{item:odd_wellfounded}:$n$) follows from (\ref{item:muP_is_measure}:$n$).  (\ref{item:PW_factor}:$n$) follows from (\ref{item:respecting_R}:$n-1$)(\ref{item:YT_desc_order}:$n-1$)(\ref{item:YT_desc_target_extension}$n-1$) and \Los{}. (\ref{item:WZ_tensor_product}:$n$) follows from the theory of level-($2n+1$) sharps.


Recall the lemma in \cite{sharpIII} on the well-definedness of $\gamma_{\mathbf{r}}$ for $\mathbf{r} $ an $R$-description when $R$ is a level-3 tree. 
\begin{lemma}
  \label{lem:gamma_r_continuous_type}
  Suppose $R$ is a level-3 tree, $\vec{\gamma}  \in [\bolddelta{3}]^{R \uparrow}$, $\mathbf{r} = (r, Q, \overrightarrow{(d,q, P)}) \in \desc(R)$ is of continuous type. Then $\gamma_{\mathbf{r}} = j_{\sup}^{R_{\tree}(r^{-}), Q} (\gamma_{r^{-}})$. 
\end{lemma}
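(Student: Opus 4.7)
Let me sketch the argument for Lemma~\ref{lem:gamma_r_continuous_type}, which at $n=1$ (level-3) is the base case of (\ref{item:gamma_r_continuous_type}:$m$).

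The plan is to fix an order-preserving $F\in(\bolddelta{3})^{R\uparrow}$ representing $\vec{\gamma}$, set $s=r^{-}$, $P^{-}=R_{\tree}(s)$, and note that $Q$ is obtained from $P^{-}$ by adjoining the single node $(d_s,q_s)$ dictated by $R(s)=(P^{-},(d_s,q_s,P_s))$. One then shows $\gamma_{\mathbf{r}}=[F_r]_{\mu^Q}=j^{P^{-},Q}_{\sup}(\gamma_s)$ by proving each inequality separately. Using (\ref{item:respecting_R}:$0$) from \cite{sharpII} together with (\ref{item:q_respecting}:$0$), one may arrange that $\ran(F)$ sits inside a club $C\in\nu_3$ closed under suprema of sequences of cofinality at most $j^{P^{-},Q}_{\sup}(\gamma_s)$, so that $F$ is genuinely ``continuous'' at $<_{BK}$-limits of $\rep(R)$.

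For the lower bound, fix $\eta<\gamma_s$ and write $\eta=[g]_{\mu^{P^{-}}}$ with $g(\vec{\alpha})<F_s(\vec{\alpha})$ for $\mu^{P^{-}}$-a.e.\ $\vec{\alpha}$. The product structure of $\mu^Q$ (from (\ref{item:strong_partition}:$0$)) implies that for $\mu^Q$-a.e.\ $\vec{\beta}$ the restriction $\vec{\alpha}:=\vec{\beta}\res P^{-}$ respects $P^{-}$ and satisfies $g(\vec{\alpha})<F_s(\vec{\alpha})=F(\vec{\alpha}\oplus_R s)$. Inspecting the definition of $\oplus_R$ shows that $\vec{\alpha}\oplus_R s$ is a proper $<_{BK}$-initial segment of $\vec{\beta}\oplus_R(s\concat(-1))$, so order-preservation gives $F(\vec{\alpha}\oplus_R s)<F_r(\vec{\beta})$. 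Chaining these inequalities and taking supremum over $\eta<\gamma_s$ yields $j^{P^{-},Q}_{\sup}(\gamma_s)\leq\gamma_{\mathbf{r}}$.

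For the upper bound, the closure of $C$ ensures that for $\mu^Q$-a.e.\ $\vec{\beta}$, $F_r(\vec{\beta})$ is the supremum of the values $F(\vec{\alpha}'\oplus_R s)$ taken over tuples $\vec{\alpha}'$ that respect $P^{-}$, agree with $\vec{\beta}\res P^{-}$ on all the coordinates appearing in $\oplus_R s$, and for which $\alpha'_{(d_s,q_s)}<\beta_{(d_s,q_s)}$ (the latter making sense since $(d_s,q_s)\in\dom(Q)$ is the distinguished node adjoined to $P^{-}$). Since this cofinal family of $\vec{\alpha}'$ is precisely the family witnessing the level-2 continuous-type formula (\ref{item:Q_ordinal_cont_type}:$0$) applied to the factor map $j^{P^{-},Q}$, we get $F_r(\vec{\beta})\leq j^{P^{-},Q}_{\sup}(F_s(\vec{\beta}\res P^{-}))$ a.e., and integrating with \Los{} gives $\gamma_{\mathbf{r}}\leq j^{P^{-},Q}_{\sup}(\gamma_s)$. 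In particular the value $[F_r]_{\mu^Q}$ depends only on $\gamma_s$ and not on the choice of $F$, yielding well-definedness of $\gamma_{\mathbf{r}}$ as a bonus.

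The main obstacle is the upper bound: one must verify that the cofinal family of smaller elements of $\rep(R)$ accounting for the ``$-1$'' coordinate really is captured by the $\vec{\alpha}'\oplus_R s$-description above, which requires a careful BK-order analysis of $\oplus_R$ at continuous-type nodes; this is also the place where the preparatory closure assumption on $C$ is indispensable, because without it $F$ need not be continuous at the relevant $<_{BK}$-limit, and only the lower bound would go through.
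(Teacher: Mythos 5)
The paper does not supply a proof of Lemma~\ref{lem:gamma_r_continuous_type}; it explicitly says ``Recall the lemma in \cite{sharpIII}\dots'' and simply restates it, noting only that ``it has an obvious generalization to the higher levels.'' So there is no paper proof to compare against, and I can only check your argument on its own terms.

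There is a genuine and fatal error in the lower-bound step. You write that since $\vec{\alpha}\oplus_R s$ is a proper initial segment of $\vec{\beta}\oplus_R(s\concat(-1))$, order-preservation gives $F(\vec{\alpha}\oplus_R s)<F_r(\vec{\beta})$. But in the Kleene--Brouwer order a sequence that \emph{properly extends} another is \emph{smaller}, so in fact $\vec{\beta}\oplus_R(s\concat(-1))<_{BK}\vec{\alpha}\oplus_R s$ and hence $F_r(\vec{\beta})<F_s(\vec{\alpha})$, the opposite of what you claim. This is not merely a matter of convention: the lemma itself asserts $\gamma_{\mathbf{r}}=j^{P^{-},Q}_{\sup}(\gamma_s)\leq j^{P^{-},Q}(\gamma_s)=[F_s\circ(\cdot\res P^{-})]_{\mu^Q}$, which forces $F_r(\vec{\beta})<F_s(\vec{\beta}\res P^{-})$ a.e.; had your inequality been correct it would combine with your upper bound to give $\gamma_{\mathbf{r}}=j^{P^{-},Q}(\gamma_s)$, so that $j^{P^{-},Q}$ would always be continuous at $\gamma_s$, which is false. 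Once the inequality is corrected, the chain $g(\vec{\alpha})<F_s(\vec{\alpha})$ and $F_r(\vec{\beta})<F_s(\vec{\alpha})$ no longer yields $g(\vec{\alpha})\leq F_r(\vec{\beta})$, and the lower bound needs a genuinely different argument, one that compares $g$ against the $F$-values of the points of $\rep(R)$ that sit $<_{BK}$-\emph{below} $\vec{\beta}\oplus_R(s\concat(-1))$, which are \emph{not} simply $\vec{\alpha}\oplus_R s$.

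There are further problems. In the upper bound you describe the tuples $\vec{\alpha}'$ as ``respecting $P^{-}$'' yet simultaneously impose $\alpha'_{(d_s,q_s)}<\beta_{(d_s,q_s)}$; since $(d_s,q_s)\notin\dom(P^{-})$, a $P^{-}$-respecting tuple has no $(d_s,q_s)$ coordinate, and if you only constrain $\vec{\alpha}'$ on $\dom(P^{-})$ (indeed only on the coordinates appearing in $\oplus_R s$), then $F(\vec{\alpha}'\oplus_R s)$ is constant over the described family and the supremum you invoke is degenerate. Finally, your opening move of choosing $F$ with $\ran(F)$ inside a club with special closure properties is left unjustified, and is delicate precisely because the lemma's content is that $\gamma_{\mathbf{r}}$, defined via an \emph{arbitrary} representative $F$ of $\vec{\gamma}$, is independent of that choice; one cannot simply restrict attention to a convenient $F$ without first establishing that independence, which is circular here.
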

It has an obvious generalization to the higher levels. 

\begin{lemma}
  \label{lem:regular_cardinals}
  Suppose $R$ is a finite level-($2n+1$) tree, $\mathbf{A} \in \exexdesc(R)$, $\mathbf{A} \neq (\emptyset,\emptyset,\emptyset)$. Then $\cf^{\mathbb{L}[j^R(T_{2n+1})]}( \seed^R_{\mathbf{A}}) = \seed^R_{\ucf(\mathbf{A})}$.
\end{lemma}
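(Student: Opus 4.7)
The plan is a case analysis following the four defining clauses of $\ucf(\mathbf{A})$, with the main engine being \Los{} for $\mu^R$ applied to the representation $\seed^R_{\mathbf{A}} = [\,\vec{\gamma} \mapsto \pi^T(\gamma_{\mathbf{r}})\,]_{\mu^R}$, combined with the higher-level analog of Lemma~\ref{lem:gamma_r_continuous_type} and the cardinal/regularity characterization from (\ref{item:cardinals}:$n-1$). The structure of the argument mirrors the level-$2$ and level-$3$ cases carried out in \cite{sharpII,sharpIII}, with one case per clause of the $\ucf$ definition.

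Write $\mathbf{A} = (\mathbf{r},\pi,T)$ with $\mathbf{r} = (r,Q,\overrightarrow{(d,q,P)})$ and $\lh(r) = k$. First I would handle the \emph{$r$ discontinuous, $(\pi,T)$ continuous} case, where $\ucf(\mathbf{A}) = \mathbf{r}$: since $r$ is of discontinuous type the description $\mathbf{r}$ is regular, so by (\ref{item:cardinals}:$n-1$) the ordinal $\seed^R_{\mathbf{r}}$ is a regular $\mathbb{L}[j^R(T_{2n+1})]$-cardinal in the interval $[\bolddelta{2n+1},j^R(\bolddelta{2n+1})]$. Continuity of $(\pi,T)$ at $(d_{k-1},q_{k-1})$ combined with \Los{} implies $\pi^T$ is continuous at $\seed^R_{\mathbf{r}}$, so $\pi^T \restriction \seed^R_{\mathbf{r}}$ is a cofinal $\mathbb{L}[j^R(T_{2n+1})]$-definable map of order type $\seed^R_{\mathbf{r}}$ into $\pi^T(\seed^R_{\mathbf{r}}) = \seed^R_{\mathbf{A}}$. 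Next, in the \emph{$r$ discontinuous, $(\pi,T)$ discontinuous} case, the decomposition $(\pi^+,Q^+)$ adjoins a single new node $q^+ \in \dom(Q^+)$ whose image $\pi^+(q^+)$ sits immediately below the gap in $T$ created by $\pi$ at the target node; the associated seeds $\seed^R_{(r,Q^+,\overrightarrow{(d,q,P)})}$ indexed by the extension of the description along $q^+$ form a cofinal sequence of length $\seed^R_{\ucf(\mathbf{A})}$ inside $\seed^R_{\mathbf{A}}$, again by \Los{}.

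I would then reduce the two \emph{$r$ continuous} cases to the discontinuous ones via the generalized version of Lemma~\ref{lem:gamma_r_continuous_type}, which rewrites $\gamma_{\mathbf{r}} = j^{R_{\tree}(r^-),Q}_{\sup}(\gamma_{r^-})$ pointwise a.e.\ modulo $\mu^R$. Consequently $\seed^R_{\mathbf{A}} = \pi^T \circ j^{R_{\tree}(r^-),Q}_{\sup}(\seed^R_{r^-})$, and the cofinality computation reduces to the discontinuous-type analysis applied to the one-step-shortened description $(r^-,R_{\tree}(r^-),\overrightarrow{(d,q,P)})$ (when $(\pi,T)$ is continuous at the top) or $(r^-,Q,\overrightarrow{(d,q,P)})$ (when $(\pi,T)$ is discontinuous there, so that the sup inside $Q$ is not absorbed into the target $T$).

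The main obstacle is the discontinuous/discontinuous case: one must verify that the new node $q^+$ produced by the decomposition actually witnesses cofinality in $\seed^R_{\mathbf{A}}$ in an $\mathbb{L}[j^R(T_{2n+1})]$-definable way. This is where the theory of the level-$(2n+1)$ sharp developed earlier in this section is deployed: the constants $c^{(2n+1)}_{T,\gamma_{\mathbf{A}}}$ are precisely the generators of the relevant Skolem hull of $\seed^R_{\mathbf{A}}$, and the general remarkability of $0^{(2n+1)\#}$ together with the inductive description/factoring machinery at the lower level $\leq 2n$ guarantees that as $\pi^+$ varies over the admissible extensions at $q^+$, the seeds $\seed^R_{(r,Q^+,\overrightarrow{(d,q,P)})}$ exhaust a cofinal subset of $\seed^R_{\mathbf{A}}$. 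Once this is established, the four-case bookkeeping is routine and completes the lemma.
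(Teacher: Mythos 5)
Your first half — reducing everything to the equality of cofinalities $\cf(\seed^R_{\mathbf{A}}) = \cf(\seed^R_{\ucf(\mathbf{A})})$ via the higher analog of Lemma~\ref{lem:gamma_r_continuous_type}, \Los{}, and the continuity/decomposition analysis of $(\pi,T)$ — is in the same spirit as the paper's opening sentence, which gets this equality by noting that $x^{(2n+1)\#}(R)$ contains the axiom ``$\cf(\underline{c_{\mathbf{A}}}) = \cf(\underline{c_{\ucf(\mathbf{A})}})$''. The genuine gap is in the step that upgrades this to the stated identity $\cf(\seed^R_{\mathbf{A}}) = \seed^R_{\ucf(\mathbf{A})}$. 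You cite (\ref{item:cardinals}:$n-1$) to conclude that $\seed^R_{\mathbf{r}}$ is a regular $\mathbb{L}[j^R(T_{2n+1})]$-cardinal whenever $\mathbf{r}$ is a regular description. But (\ref{item:cardinals}:$n-1$) concerns finite level $\leq 2n-1$ trees $P$ and the regular cardinals of $\mathbb{L}[j^P(T_{2n-1})]$ in $[\bolddelta{2n-1}, j^P(\bolddelta{2n-1})]$; it says nothing about the level-($2n+1$) tree $R$, the model $\mathbb{L}[j^R(T_{2n+1})]$, or the interval $[\bolddelta{2n+1}, j^R(\bolddelta{2n+1})]$. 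What you actually need — regularity of $\seed^R_{\mathbf{r}}$ in $\mathbb{L}[j^R(T_{2n+1})]$ for every regular $\mathbf{r} \in \exdesc(R)$ — is precisely (\ref{item:cardinals}:$n$), and the paper derives (\ref{item:cardinals}:$n$) \emph{from} this lemma, so invoking it (at level $n$, or misattributed to level $n-1$) is circular. The same defect affects your continuous-type and discontinuous/discontinuous cases: producing a cofinal order-preserving map only shows the two seeds have the same cofinality; without the level-$n$ regularity fact you cannot identify that common cofinality with $\seed^R_{\ucf(\mathbf{A})}$ itself.

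That regularity claim is where essentially all the work in the paper's proof lies, and it is not obtainable by a Łoś computation alone. The paper takes a least counterexample $\seed^R_{\mathbf{r}}$ with $\cf^{M^{-}_{2n,\infty}(x)}(\seed^R_{\mathbf{r}}) = \delta < \seed^R_{\mathbf{r}}$ and splits into cases. If $\delta = \seed^R_{\mathbf{r}'}$ with $\mathbf{r}' \prec^R \mathbf{r}$, one constructs a one-node extension $S$ of $R$ with a new node $s$ satisfying $s \prec^S \mathbf{r}$ and $\mathbf{A} \prec^S s$ for all $\mathbf{A} \in \exexdesc(R)$; general remarkability of the level-($2n+1$) sharp then forces the Skolem term alleged to map $\underline{c_{\mathbf{r}'}}$ cofinally into $\underline{c_{\mathbf{r}}}$ to have its image contained in $\underline{c_s}$, a contradiction. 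If $\delta < \bolddelta{2n+1}$, the lower-level induction hypotheses pin $\delta$ down as $\omega$ or $j^P(\bolddelta{2k+1})$ with $k < n$ and $\card(P) \leq 1$, hence definable in $M_{2n}^{\#}(x)$, and the first case's argument is repeated. Your proposal gestures at ``general remarkability together with the inductive machinery'' in the discontinuous case, but it never formulates or proves the regularity statement, and without this indiscernible-extension argument (or a substitute) the proof does not go through.
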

\begin{proof}
  By the higher analog of proof of Lemma~\ref{lem:gamma_r_continuous_type},  for any real $x$, $x^{3\#}(R)$ contains the axiom ``$\cf(\underline{c_{\mathbf{A}}}) = \cf( \underline{c_{\ucf(\mathbf{A})}})$''. So $\cf^{\mathbb{L}[j^R(T_{2n+1})]}( \seed^R_{\mathbf{A}}) = \cf^{\mathbb{L}[j^R(T_{2n+1})]}(\seed^R_{\ucf(\mathbf{A})})$. So the $\mathbb{L}[j^R(T_{2n+1})]$-regular cardinals in the interval $[\bolddelta{2n+1}, j^R(\bolddelta{2n+1})]$ is a subset of $\{\seed^R_{\mathbf{r}}:\mathbf{r} \in \exdesc(R)$ is regular$\}$.  It remains to show that for any $\mathbf{r} = (r, Q, \overrightarrow{(d,q,P)})\in \exdesc(R)$ regular, $\seed^R_{\mathbf{r}}$ is regular in $\mathbb{L}[j^R(T_{2n+1})]$. Suppose towards a contraction that $\seed^R_{\mathbf{r}}$ is the least counterexample and for some real $x$, 
$\cf^{M^{-}_{2n,\infty}(x)}(\seed^R_{\mathbf{r}})  =  \delta < \seed^R_{\mathbf{r}}$ and $\delta$ is $\mathbb{L}[j^R(T_{2n+1})]$-regular.

Case 1: $\delta = \seed^R_{\mathbf{r}'}$, $\mathbf{r}' \prec^R \mathbf{r}$. 

So $x^{(2n+1)\#}(R)$ contains the axiom ``$\cf(\underline{c_{\mathbf{r}}}) = \underline{c_{\mathbf{r}'}}$''.  However, we can easily construct a one-node extension $S$ of $R$ such that $s \in \dom(S) \setminus \dom(R)$, $s \prec^S{\mathbf{r}}$ and $\mathbf{A}\prec^S s$ for any $\mathbf{A} \in \exexdesc(R)$. Let $\tau$ be an $\mathcal{L}^R$-Skolem term such that $x^{(2n+1)\#}(R)$ contains the formula ``$\tau(\cdot)$ is a cofinal map from $\underline{c_{\mathbf{r}'}}$ to $\underline{c_{\mathbf{r}}}$''. By general remarkability, $x^{(2n+1)\#}(S)$ contains the formula ``$\tau'' \underline{c_{\mathbf{r}'}} \subseteq \underline{c_s}$''. This is a contradiction. 

Case 2: $\delta < \bolddelta{2n+1}$. 

By (\ref{item:subset_does_not_increase_0}:$k$)(\ref{item:muP_is_measure}:$k$) for $k \leq n$, (\ref{item:subset_does_not_increase}:$k$)(\ref{item:regular_cardinals_Q}:$k$) for $k\leq n-1$, either $\delta=\omega$ or $\delta = j^P(\bolddelta{2k+1})$ satisfying $k < n$, $P$ is a level-($2k+1$) tree, $\card(P) \leq 1$. $\delta$ is definable in $M_{2n}^{\#}(x)$, allowing us to proceed with the proof of Case 1.
\end{proof}

Lemma~\ref{lem:regular_cardinals} implies~(\ref{item:cardinals}:$n$). The proof of~(\ref{item:sharp_code_complexity}:$n$) is basically in \cite{sharpIII}.

We prove (\ref{item:signature_Q}:$n$). Recall that a level-($2n+1$) sharp code is based on the tree $R^{(2n+1)}_{E(2n+1)}$.  One can define the ``$R^{(2n+1)}_{E(2n+1)}$-signature'', ``$R^{(2n+1)}_{E(2n+1)}$-approximation sequence'', etc.\ in a completely parallel way. Appealing to (\ref{item:cardinals}:$n$) for uniform cofinality when necessary, the $R^{(2n+1)}_{E(2n+1)}$-partial level $\leq 2n+1$ tower induced by $\beta$ and the $R^{(2n+1)}_{E(2n+1)}$ approximation sequence of $\beta$ are uniformly $\Delta^1_{2n+3}$ definable from $\beta$. The issue is reduce $R^{(2n+1)}_{E(2n+1)}$ to finite trees. 
If $\beta = [f]_{\mu^W}$, $W$ is a finite level-$(2n+1)$ tree, and $f(\vec{\alpha}) = \tau^{M_{2n,\infty}^{-}(x)}(x, \vec{\alpha})$ for $\mu^W$-a.e.\ $\vec{\alpha}$, $\rho$ factors $(W, R^{(2n+1)}_{E(2n+1)})$, then $\rho^{R^{(2n+1)}_{E(2n+1)}}(\beta) = \sharpcode{\corner{\gcode{\tau(\underline{x}, (\underline{c_{\rho(w)}})_{w \in \dom(W)})}, x^{(2n+1)\#}}}$. 
It suffices to show that $\rho^{R^{(2n+1)}_{E(2n+1)}} \res j^W(\bolddelta{2n+1})$  is $\Delta^1_{2n+3}$, uniformly in $(W, \rho)$. By looking at the $R^{(2n+1)}_{E(2n+1)}$-signature, $(\rho^{R^{(2n+1)}_{E(2n+1)}})''  j^W(\bolddelta{2n+1})$ is uniformly $\Delta^1_{2n+3}$. Hence,
\begin{displaymath}
\rho^{E^{(2n+1)}}_{E(2n+1)}(\sharpcode{\corner{\gcode{\sigma}, y^{(2n+1)\#}}}) = \sharpcode{\corner{\gcode{\chi}, z^{(2n+1)\#}}}
\end{displaymath}
iff for some (or equivalently, for any) countable transitive model $M$ of ZFC containing $y,z$ such that $\mathbb{R}^M$ is closed under $M_{2n}^{\#}$, $M$ satisfies that $\rho^{R^{(2n+1)}_{E(2n+1)}}(\sharpcode{\corner{\gcode{\sigma}, y^{(2n+1)\#}}}) = \sharpcode{\corner{\gcode{\chi}, z^{(2n+1)\#}}}$. This is a  $\Delta^1_{2n+3}$ definition of $\rho^{R^{(2n+1)}_{E(2n+1)}} \res j^W(\bolddelta{2n+1})$. 

The level-($2n+2$) Martin-Solovay tree $T_{2n+2}$ is defined as follows. Let $T^{(2n+1)}$ be 
 a recursive tree so that $z \in [T]$ iff $z$ is a remarkable level-($2n+1$) EM blueprint over a real. Let $(r_i)_{1 \leq i < \omega}$ be an effective enumeration of $\dom(R^{(2n+1)}_{E(2n+1)})$ and let $(\tau_{k})_{k<\omega}$ be an effective enumeration of all the $\mathcal{L}$-Skolem terms for an ordinal, where $\tau_k$ is ($f(k)+1$)-ary.   
 $T_{2n+2}$ is the tree on $2 \times u^{(2n+1)}_{E(2n+1)}$ where
\begin{displaymath}
  (t,\vec{\alpha}) \in T_{2n+2}
\end{displaymath}
iff $t \in T^{(2n+1)}$ and 
\begin{enumerate}
\item if $\xi \leq \eta < E(2n+1)$, $r_1,\dots,r_{f(k)} \in \dom(R^{(2n+1)} _{\xi})$, $r_1,\dots,r_{f(l)} \in \dom(R^{E(2n+1)}_{\eta})$,  $\rho$ factors $(R^{(2n+1)} _{\xi}, R^{(2n+1)} _{\eta})$,
  \begin{enumerate}
  \item if
``$\tau_k ( \underline{x},\underline{c_{\rho(r_1)}},\dots, \underline{c_{\rho(r_{f(k)})}}) = \tau_l (\underline{x}, \underline{c_{r_1}},\dots, \underline{c_{r_{f(l)}}})$'' is true in $t$,  then $\rho^{R^{(2n+1)}_{\eta}}(\alpha_k) = \alpha_l$;
  \item if
``$\tau_k ( \underline{x},\underline{c_{\rho(r_1)}},\dots, \underline{c_{\rho(r_{f(k)})}}) < \tau_l (\underline{x}, \underline{c_{r_1}},\dots, \underline{c_{r_{f(l)}}})$'' is true in $t$,  then $\rho^{R^{(2n+1)}_{\eta}}(\alpha_k) < \alpha_l$;
\end{enumerate}
\item if $\xi < E(2n+1)$, $r_1,\dots,r_{\max(f(k),f(l))} \in \dom(R^{(2n+1)}_{\xi})$, 
$Q,Q'$ are finite level $\leq 2n+1$ trees, $Q$ is a subtree of $Q'$, ``$\underline{j^{Q,Q'}}(\tau_{k}(\underline{x},\underline{c_{r_1}},\dots, \underline{c_{r_{f(k)}}})) = \tau_{l}(\underline{x}, \underline{c_{r_1}},\dots, \underline{c_{r_{f(l)}}})$'' is true in $t$, then $j^{R^{(2n+1)}_{\xi} \otimes Q, R^{(2n+1)}_{\xi} \otimes Q'} (\alpha_k)  = \alpha_l$. 
\end{enumerate}

We have:
\begin{itemize}
\item $p[T_{2n+2}] = \set{x^{(2n+1)\#}}{x \in \mathbb{R}}$.
\item $T_{2n+2}$ is $\Delta^1_{2n+3}$ in the level-($2n+1$) sharp codes.
\item For any $x \in \mathbb{R}$, $(T_{2n+2})_{x^{(2n+1)\#}}$ has an honest leftmost branch
  \begin{displaymath}
(\tau_{k}^{(j^{R^{(2n+1)}_{E(2n+1)}}(M_{2n,\infty}^{-}(x)))}(x, \seed^{R^{(2n+1)}_{E(2n+1)}}_{r_1},\dots,\seed^{R^{(2n+1)}_{E(2n+1)}}_{r_{f(k)}}))_{k<\omega}.
\end{displaymath}

\end{itemize}

By analyzing the representative functions of ordinals below $u^{(2n+1)}_{E(2n+1)}$, we conclude the following in parallel to the level-1 scenario:
\begin{lemma}\label{lem:W_signature}
 Suppose $\bolddelta{2n+1} \leq \beta < j^W(\bolddelta{2n+1})$ is a limit ordinal. Then:
  \begin{enumerate}
  \item The $W$-uniform cofinality of $\beta$ is equal to $\ucf^{\mathbb{L}[j^W(T_{2n+1})]}(\beta)$.
  \item Suppose the $W$-signature of $\beta$ is ${(d_i,\mathbf{w}_i)_{i < k}}$, the $W$-approximation sequence of $\beta$ is $(\gamma_i)_{i \leq k}$, the $W$-partial level $\leq 2n+1$ tower induced by $\beta$ is $(P_i, \dots)_i$. Then:
    \begin{enumerate}
    \item For $i < l < k$, $j_{\sup}^{P_i, P_l} (\gamma_i) < \gamma_l < j^{P_i,P_l}(\gamma_i)$.
    \item For $i < k$, $(\sigma \res \dom(P_i))^W_{\sup} (\gamma_i) \leq \gamma_k <(\sigma \res \dom(P_i))^W(\gamma_i)$.
    \item For $i < k$, $(\sigma \res \dom(P_i))^W_{\sup} (\gamma_i) = \gamma_k$ iff $i=k-1$ and $\beta$ is $W$-essentially continuous.
    \item $\beta = \sigma^W (\gamma_k)$.
    \end{enumerate}
  \end{enumerate}
\end{lemma}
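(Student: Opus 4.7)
Write $\beta = [f]_{\mu^W}$ where $f$ has signature $(d_i,w_i)_{i<k}$, induced factoring $\sigma$, induced tower $(P_i)_{i\leq k}$, and approximation sequence $(f_i)_{i\leq k}$ with $f_0\equiv \bolddelta{2n+1}$ and
\begin{displaymath}
f_i(\vec{\alpha}) = \sup\set{f(\vec{\beta})}{(\comp{d_0}{\beta}_{w_0},\dots,\comp{d_{i-1}}{\beta}_{w_{i-1}}) = (\comp{d_0}{\alpha}_{p_0},\dots,\comp{d_{i-1}}{\alpha}_{p_{i-1}})}
\end{displaymath}
for $1\leq i\leq k$. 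By definition $\gamma_i = [f_i]_{\mu^{P_i}}$. The plan is to first verify clauses (2a)--(2d) by straightforward \Los{} arguments on these representing functions, and then to deduce clause (1) by locating $\ucf^{\mathbb{L}[j^W(T_{2n+1})]}(\beta)$ among the list of $\mathbb{L}[j^W(T_{2n+1})]$-regular cardinals provided by Lemma~\ref{lem:regular_cardinals}.

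For (2a)--(2d) I would argue as follows. The lower bounds $j^{P_i,P_l}_{\sup}(\gamma_i) < \gamma_l$ and $(\sigma\res\dom(P_i))^W_{\sup}(\gamma_i)\leq \gamma_k$ follow because $f_l$ (resp.\ $f_k\circ(\text{reindex by }\sigma)$) strictly exceeds the supremum of $f_i$ evaluated below each fixed new coordinate, on a $\mu^{P_l}$-measure one (resp.\ $\mu^W$-measure one) set; apply \Los{} to translate pointwise strict inequality in the universe into strict inequality of the ultrapower representatives. The upper bounds $\gamma_l < j^{P_i,P_l}(\gamma_i)$ and $\gamma_k < (\sigma\res\dom(P_i))^W(\gamma_i)$ follow pointwise from the fact that $f_l$ is defined as a supremum below $f_i$ composed with the restriction map. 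Clause (2c) is a direct unfolding of the definition of essential continuity applied at the last coordinate. Clause (2d) is the identity $f_k(\vec{\beta}_\sigma) = f(\vec{\beta})$ that holds for $\mu^W$-a.e.\ $\vec{\beta}$, once again by \Los{}.

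For (1), by Lemma~\ref{lem:regular_cardinals} and (\ref{item:cardinals}:$n$), the uncountable $\mathbb{L}[j^W(T_{2n+1})]$-regular cardinals in the interval $[\bolddelta{2n+1},j^W(\bolddelta{2n+1})]$ are exactly $\set{\seed^W_{\mathbf{r}}}{\mathbf{r}\in\exdesc(W)\text{ regular}}$, so $\ucf^{\mathbb{L}[j^W(T_{2n+1})]}(\beta)$ is either $\omega$ or some $\seed^W_{\mathbf{r}}$. If $f$ has uniform cofinality $(0,-1)$, then the witness $g\colon [\ldots]^{W\uparrow(0,-1)}\to\bolddelta{2n+1}$ lies in $\mathbb{L}[T_{2n+1}]$ and $[\vec{\alpha}\mapsto g(\vec{\alpha},n)]_{\mu^W}$ is a strictly increasing $\omega$-sequence cofinal in $\beta$ inside $\mathbb{L}[j^W(T_{2n+1})]$. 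If $f$ has uniform cofinality $(d_*,\mathbf{w}_*)$ with $d_*>0$, then the witness $g$ represents a function $h\in\mathbb{L}[j^W(T_{2n+1})]$ with $\dom(h)=\seed^W_{(d_*,\mathbf{w}_*)}$ that is order preserving and cofinal in $\beta$, so $\ucf^{\mathbb{L}[j^W(T_{2n+1})]}(\beta)\leq \seed^W_{(d_*,\mathbf{w}_*)}$. Conversely, if $\ucf^{\mathbb{L}[j^W(T_{2n+1})]}(\beta) = \seed^W_{\mathbf{r}}$ with $\mathbf{r}\prec^W (d_*,\mathbf{w}_*)$ strict, then pulling a cofinal map back via its representing function in $\mathbb{L}[T_{2n+1}]$ would exhibit $f$ as having uniform cofinality strictly below $(d_*,\mathbf{w}_*)$, contradicting the definition of the $W$-uniform cofinality as the least such entry.

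The main obstacle is the converse direction of (1): given an abstract $\mathbb{L}[j^W(T_{2n+1})]$-cofinal map, one must extract a representing function in $\mathbb{L}[T_{2n+1}]$ whose signature and uniform cofinality match $(d_*,\mathbf{w}_*)$. This requires (\ref{item:subset_does_not_increase_0}:$n$) to see that the relevant cofinal subset lies in $\mathbb{L}[T_{2n+1}]$ and hence admits a $\mu^W$-representative, together with a careful normal-form analysis of representing functions paralleling the level-1 case, using the essential uniqueness of signature and uniform cofinality established at lower levels.
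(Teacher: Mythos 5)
Your plan is exactly what the paper intends: the paper gives no explicit proof of Lemma~\ref{lem:W_signature}, asserting only that it follows ``by analyzing the representative functions of ordinals below $u^{(2n+1)}_{E(2n+1)}$ \dots in parallel to the level-1 scenario,'' and your \Los{}-style verification of (2a)--(2d) from the representing function $f$ and its approximation sequence, together with the appeal to Lemma~\ref{lem:regular_cardinals} and (\ref{item:cardinals}:$n$) for clause (1), is that analysis spelled out. One remark on clause (1): the ``converse direction'' you flag as the main obstacle is not actually needed, and your justification of it is slightly off, since the $W$-uniform cofinality is not defined as a least entry but by the existence of a witness $g \in \mathbb{L}[T_{2n+1}]$. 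Once \Los{} transfers $g$ to an order-preserving map sending $\seed^W_{(d_*,\mathbf{w}_*)}$ (or $\omega$) cofinally into $\beta$ inside $\mathbb{L}[j^W(T_{2n+1})]$, the fact that this seed is \emph{regular} in that model (Lemma~\ref{lem:regular_cardinals}) already forces $\cf^{\mathbb{L}[j^W(T_{2n+1})]}(\beta) = \seed^W_{(d_*,\mathbf{w}_*)}$; so no pull-back of an arbitrary cofinal map, and no appeal to (\ref{item:subset_does_not_increase_0}:$n$), is required. What your sketch (like the paper) does take as given is the lower-level fact that every $f \in \mathbb{L}[T_{2n+1}]$ representing such a $\beta$ has a well-defined signature, approximation sequence and uniform cofinality; that is part of the level-1 analysis being generalized, not something to be re-proved here.
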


\begin{lemma}\label{lem:W_signature_reverse}
 Suppose $\pi$ factors finite level $\leq 2n+1$ trees $(W,W')$. Suppose $\gamma < j^P(\bolddelta{2n+1})$ and $\pi^{W'}_{\sup}(\gamma) < \gamma' < \pi^{W'}(\gamma)$. Let $({(d_k, {w}_k)})_{k<v}$, $(\gamma_k)_{k\leq v}$, $(P,\overrightarrow{(d,p,R)})$ be the $W$-signature, $W$-approximation sequence and $W$-potential partial level $\leq 2n+1$ tower induced by $\gamma$ respectively. Let $({(d_k', {w}_k')})_{k<v'}$, $(\gamma_k')_{k\leq v'}$, $(P',\overrightarrow{(d',p',R')})$ be the $W'$-signature, $W'$-approximation sequence and $W'$-potential partial level $\leq 2n+1$ tower induced by $\gamma'$ respectively.  Let $\cf^{\mathbb{L}[j^W(T_{2n+1})]}(\gamma) = \seed^W_{(d_{*},\mathbf{w}_{*})}$.  Then
  \begin{enumerate}
  \item $v < v'$, $\pi(d_k,{w}_k) = (d_k',{w}_k')$ and $\gamma_k = \gamma_k'$ for any $k < v$. $\gamma$ is essentially discontinuous $\to \gamma_v = \gamma_{v'}$.  $\gamma$ is essentially continuous$\to\gamma_v < \gamma_{v'}$.
  \item $l'_{k} \notin \ran( \pi)$ for $v\leq k < v'$.
  \item $P$ is a proper subtree of $P'$ and $\vec{p}$ is an initial segment of $\vec{p}'$.
  \end{enumerate}
  Moreover, if $ \gamma' < \gamma'' < \pi^{W'}(\gamma)$ and $(\gamma_k'')_{k \leq v''}$ is the $W'$-approximation sequence of $\gamma''$, then $\gamma_{v}'<\gamma_{v}''$.
\end{lemma}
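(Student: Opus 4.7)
The plan is to reduce to representative functions and then read off each conclusion directly from the partial level $\leq 2n+1$ tower structure. Pick $f \in \mathbb{L}[T_{2n+1}]$ representing $\gamma$ modulo $\mu^W$, with signature $(d_k,w_k)_{k<v}$, approximation sequence $(f_k)_{k \leq v}$, and induced potential partial tower $(P,\overrightarrow{(d,p,R)})$ and factoring map $\sigma$. Pick $g$ representing $\gamma'$ modulo $\mu^{W'}$ analogously. The lift $f^*(\vec\beta) = f(\vec\beta_\pi)$ represents $\pi^{W'}(\gamma)$, and the $W'$-signature of $\pi^{W'}(\gamma)$ is the image $\pi(d_k,w_k)_{k<v}$ with tower $(P,\overrightarrow{(d,p,R)})$ and factoring map $\pi\circ\sigma$. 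Since $\gamma'>\pi^{W'}_{\sup}(\gamma)$, for $\mu^{W'}$-a.e.\ $\vec\beta$ the value $g(\vec\beta)$ cannot be bounded by any function of only the coordinates $\pi(d_k,w_k)$; that is exactly the statement that $g$ genuinely depends on coordinates outside $\ran(\pi)$, which is conclusion (2) once $v'$ is chosen minimally so that the first $v'$ signature entries pin down $g$.

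Next, I would compare the two signatures coordinate by coordinate. Because $g < f^*$ a.e., the lexicographic ordering of signature coordinates forces $(\pi(d_k,w_k))_{k<v'}$ to be an initial extension of the initial segment $(d_k',w_k')_{k<v'}$ restricted to $\ran(\pi)$, and equality on the first $v$ entries follows from the observation that otherwise $g$ would be bounded by an image of $f$. This gives $\pi(d_k,w_k)=(d_k',w_k')$ and $\gamma_k = \gamma_k'$ for $k < v$. For the dichotomy at index $v$: if $\gamma$ is essentially discontinuous then the $v$-th approximation $\gamma_v = \gamma$ is the actual value, so $\gamma_v$ already equals the $v$-th approximation $\gamma_v'$ of $\gamma'$ seen as the sup over later coordinates; if $\gamma$ is essentially continuous then $\gamma_v$ is a proper supremum over the $v$-th coordinate, and any $\gamma' > \pi^{W'}_{\sup}(\gamma)$ must strictly exceed the $v$-th continuous approximation of $\pi^{W'}(\gamma)$, yielding $\gamma_v < \gamma_v'$. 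This is conclusion (1).

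For (3), the partial tower induced by $g$ must extend that induced by $f$ because the first $v$ (or $v$-plus-one, in the continuous case) entries of the signatures agree, and the tree components along these entries are determined by the rules that define the tower from the signature. In particular the sequence of $(d_i,p_i)$ stays the same for the common initial segment, so $\vec p$ is an initial segment of $\vec p'$ and $P$ is a subtree of $P'$; properness follows from $v<v'$. The moreover clause is obtained by the same analysis applied to $\gamma''$ in place of $\gamma'$: if the $v$-th approximations coincided, i.e.\ $\gamma_v' = \gamma_v''$, then $\gamma'$ and $\gamma''$ would agree already at the next coordinate of the signature, forcing $\gamma'=\gamma''$ by the essential uniqueness of the representing function on the strictly larger block; but $\gamma' < \gamma''$, so $\gamma_v' < \gamma_v''$.

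The main obstacle is the careful bookkeeping at the transition between $v$ and $v+1$, where one must simultaneously invoke Lemma~\ref{lem:W_signature}, the induction hypothesis (\ref{item:cardinals}:$n$) to identify $\ucf^{\mathbb{L}[j^W(T_{2n+1})]}(\gamma) = \seed^W_{(d_*,\mathbf w_*)}$ with the abstract uniform cofinality of the representing function, and the tower-extension rules of Section~\ref{sec:synt-defin} to identify the correct tree $P'$ and continuation $\vec p'$; once this piece is in place, the remaining bullets follow by the same combinatorial comparison that appears at the lower levels in \cite{sharpIII}.
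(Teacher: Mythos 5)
Your overall route---pass to representative functions, lift $f$ along $\pi$ so that $\pi^{W'}(\gamma)$ is represented by $\vec\beta\mapsto f(\vec\beta_\pi)$, and then squeeze a representative $g$ of $\gamma'$ between $\pi^{W'}_{\sup}(\gamma)$ and $\pi^{W'}(\gamma)$---is exactly the route the paper intends: the paper gives no written argument for this lemma beyond asserting that it follows ``by analyzing the representative functions \dots in parallel to the level-1 scenario.'' So the strategy is not in question; the problem is that the three places where you discharge the actual conclusions are argued by steps that do not establish them, and these are precisely where the content of the lemma sits.

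Concretely: (i) For conclusion (2) you show only that $g$ is not $\mu^{W'}$-a.e.\ a function of the coordinates in $\ran(\pi)$ alone (correct: otherwise $\gamma'$ would equal $\pi^{W'}(\delta)$ for some $\delta$, contradicting one of the two strict inequalities). That yields \emph{some} new coordinate outside $\ran(\pi)$, not that \emph{every} signature entry $(d'_k,w'_k)$ with $v\le k<v'$ avoids $\ran(\pi)$; excluding a later entry of the form $\pi(e,u)$ needs the club/partition computation played against both $\gamma'<\pi^{W'}(\gamma)$ and $\gamma'>\pi^{W'}_{\sup}(\gamma)$, which your sketch never performs. (ii) Your dichotomy argument in (1) begins from ``if $\gamma$ is essentially discontinuous then $\gamma_v=\gamma$,'' which is false: by Lemma~\ref{lem:W_signature}, $\gamma=\sigma^W(\gamma_v)$ and $\sigma^W$ in general moves $\gamma_v$; the equality of the $v$-th approximations (and the strict inequality in the continuous case) must come from comparing $\sup\{g(\vec\beta):\text{first $v$ coordinates fixed}\}$ with the corresponding pullback of $f$, using $\gamma'<\pi^{W'}(\gamma)$ for one bound and $\gamma'>\pi^{W'}_{\sup}(\gamma)$ together with the essential (dis)continuity of $f$ for the other; none of this appears. (iii) The ``moreover'' step is a non sequitur: equality of the $v$-th approximations of $\gamma'$ and $\gamma''$ does not ``force $\gamma'=\gamma''$ by essential uniqueness of the representing function''---distinct ordinals can share an initial segment of approximations---so the strict monotonicity of $\gamma'\mapsto\gamma'_v$ on the interval again has to be extracted from the representative-function/partition analysis rather than asserted. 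In short, the outline is the intended one, but each of the lemma's actual claims is either deferred or supported by reasoning that would not survive being written out.
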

    
\begin{lemma}
  \label{lem:continuity}
  Suppose $P,W$ are finite level $\leq 2n+1$ trees and $\sigma$ factors $(P,W)$. Suppose $\gamma < j^P(\bolddelta{2n+1})$ and $\cf^{\mathbb{L}[j^P(T_{2n+1})]}(\gamma) = \seed^P_{(d,\mathbf{p})}$, $(d,\mathbf{p}) \in \exdesc(P)$ is regular.  Then
  \begin{enumerate}
  \item $\sigma^W$ is continuous at $\gamma$ iff $(\sigma,W)$ is continuous at $(d,\mathbf{p})$.
  \item Suppose $(\sigma,W)$ is discontinuous at $(d,\mathbf{p})$. Let $(P^{+}, \sigma^{+})$ be the $W$-decomposition of $\pi$. Then $\sigma^W_{\sup}(\gamma) = (\sigma^{+})^W \circ j^{P,P^{+}}_{\sup} (\gamma)$.
  \end{enumerate}
\end{lemma}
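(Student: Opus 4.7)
The plan is to reduce both claims to an analysis of $W$-signatures and $W$-approximation sequences furnished by Lemmas~\ref{lem:W_signature} and~\ref{lem:W_signature_reverse}, mirroring the level-3 argument from \cite{sharpIII}. First I would fix $g \in (\bolddelta{2n+1})^{P \uparrow}$ with $\gamma = [g]_{\mu^P}$ whose signature, uniform cofinality and approximation sequence witness the hypothesis $\cf^{\mathbb{L}[j^P(T_{2n+1})]}(\gamma) = \seed^P_{(d,\mathbf{p})}$; such $g$ exists by Lemma~\ref{lem:W_signature} applied inside $P$. Let $\bar{P}$ and $\bar{\sigma}$ be the level $\leq 2n+1$ tower and factoring map induced by $g$, so $\bar{\sigma}$ factors $(\bar{P},P)$ with terminal signature entry at $(d,\mathbf{p})$. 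Then $\sigma \circ \bar{\sigma}$ factors $(\bar{P},W)$, and $\sigma^W(\gamma)$ has its $W$-signature, $W$-approximation sequence, and $W$-uniform cofinality obtained by transporting those of $\gamma$ through $\sigma$, by \Los{} applied to $g$.

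For part~(1), I would inspect the four clauses in the definition of $(\sigma,W)$-continuity at $(d,\mathbf{p})$. In each clause, continuity at $(d,\mathbf{p})$ amounts to the $<_{BK}$-predecessor of $\sigma(d,\mathbf{p})$ in $W$ (in the sense specific to that clause) already lying in $\ran(\sigma)$; under this condition, the $W$-signature of $\sigma^W(\gamma)$ ends exactly where the $P$-signature of $\gamma$ does after translation, and $\sigma^W(\gamma)$ inherits the $W$-essential continuity type of $\gamma$. By Lemma~\ref{lem:W_signature}, this forces $\cf^{\mathbb{L}[j^W(T_{2n+1})]}(\sigma^W(\gamma)) = \sigma^W(\seed^P_{(d,\mathbf{p})}) = \seed^W_{\sigma(d,\mathbf{p})}$, yielding continuity of $\sigma^W$ at $\gamma$. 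Conversely, if a fresh $W$-node sits strictly between the $\sigma$-image of the natural $P$-predecessor of $(d,\mathbf{p})$ and the $W$-predecessor of $\sigma(d,\mathbf{p})$, then Lemma~\ref{lem:W_signature_reverse} applied with $\pi = \sigma$ produces a strictly longer $W$-approximation sequence for $\sigma^W(\gamma)$ than for $\sigma^W(\beta)$ when $\beta < \gamma$ is chosen with $\sigma^W_{\sup}(\gamma) < \sigma^W(\beta)$, whence discontinuity.

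For part~(2), the $W$-decomposition $(P^{+},\sigma^{+})$ is designed precisely so that $\sigma^{+}$ extends $\sigma$, factors $(P^{+},W)$, and maps its one new node to $\pred(\sigma,W,(d,\mathbf{p}))$. Consequently, $j^{P,P^{+}}_{\sup}(\gamma)$ is the supremum of a cofinal sequence in $j^{P,P^{+}}(\gamma)$ indexed by that fresh node of $P^{+}$, and pushing this supremum through $(\sigma^{+})^W$ produces exactly the ordinals filling the gap between $\sigma^W_{\sup}(\gamma)$ and $\sigma^W(\gamma)$. The moreover clause of Lemma~\ref{lem:W_signature_reverse} identifies the increments in the $W$-approximation sequence corresponding to ordinals in the interval $(\sigma^W_{\sup}(\gamma),\sigma^W(\gamma))$ with images under $(\sigma^{+})^W$ of the cofinal sequence, giving the claimed equality $\sigma^W_{\sup}(\gamma) = (\sigma^{+})^W \circ j^{P,P^{+}}_{\sup}(\gamma)$.

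The main obstacle I expect is the bookkeeping across the five decomposition cases and their interaction with the four continuity clauses, in particular verifying that $\pred(\sigma,W,(d,\mathbf{p}))$ is correctly identified as the freshly inserted node in every situation. The subtlest case is $\mathbf{p} \notin \desc(\comp{d}{P})$, where the new node carries a distinguished tree-label rather than being an initial segment of an existing node, so the comparison must be performed within $\comp{d}{W}\se{\comp{d}{\sigma}(p),P}$ rather than within the ambient $\comp{d}{W}$; checking that the inductively computed $W$-signature of $\sigma^W(\gamma)$ respects these labels requires careful use of (\ref{item:signature_Q}:$n-1$) together with (\ref{item:subset_does_not_increase_0}:$n$) to identify the correct approximation sequence inside $\mathbb{L}[j^W(T_{2n+1})]$.
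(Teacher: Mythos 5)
The paper states this lemma without explicit proof, only the blanket remark ``by analyzing the representative functions of ordinals below $u^{(2n+1)}_{E(2n+1)}$, in parallel to the level-1 scenario.'' Your overall strategy---reduce to an analysis of $W$-signatures, approximation sequences, and the machinery of Lemmas~\ref{lem:W_signature} and~\ref{lem:W_signature_reverse}---is exactly the intended route, and the handling of part~(2) via the moreover-clause of Lemma~\ref{lem:W_signature_reverse} is sound in outline.

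There is, however, a concrete error in your argument for the discontinuity direction of part~(1). You write that Lemma~\ref{lem:W_signature_reverse} produces a strictly longer $W$-approximation sequence ``for $\sigma^W(\beta)$ when $\beta < \gamma$ is chosen with $\sigma^W_{\sup}(\gamma) < \sigma^W(\beta)$.'' No such $\beta$ can exist: by definition $\sigma^W_{\sup}(\gamma) = \sup\set{\sigma^W(\xi)}{\xi<\gamma}$, so $\beta<\gamma$ forces $\sigma^W(\beta) \le \sigma^W_{\sup}(\gamma)$. The ordinals to which Lemma~\ref{lem:W_signature_reverse} applies are those $\gamma'$ with $\sigma^W_{\sup}(\gamma) < \gamma' < \sigma^W(\gamma)$, which are not of the form $\sigma^W(\beta)$; but then your argument is circular, since the existence of such $\gamma'$ is precisely what discontinuity of $\sigma^W$ at $\gamma$ means. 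A correct argument should instead show directly that the gap is nonempty: use the elementarity of $\sigma^W$ to reduce to continuity at the seed $\seed^P_{(d,\mathbf{p})}$ (since $\cf^{\mathbb{L}[j^P(T_{2n+1})]}(\gamma) = \seed^P_{(d,\mathbf{p})}$ and factor maps commute with cofinality, $\sigma^W$ is continuous at $\gamma$ iff $(\sigma^W)''\seed^P_{(d,\mathbf{p})}$ is cofinal in $\seed^W_{\sigma(d,\mathbf{p})}$), and then exhibit the ordinal $\seed^W_{\pred(\sigma,W,(d,\mathbf{p}))}$ sitting strictly between $\sup (\sigma^W)''\seed^P_{(d,\mathbf{p})}$ and $\seed^W_{\sigma(d,\mathbf{p})}$ using the combinatorial definition of discontinuity of $(\sigma,W)$. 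This also streamlines the forward direction and feeds directly into the computation of $\sigma^W_{\sup}(\gamma)$ via the decomposition in part~(2).

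Two further small points: the opening claim that the approximation sequence and continuity type of $\sigma^W(\gamma)$ transport cleanly through $\sigma$ is correct (it follows from elementarity of $\sigma^W$), but should be stated that way rather than attributed vaguely to \Los{}; and your closing reference to (\ref{item:signature_Q}:$n-1$) should presumably be (\ref{item:signature_Q}:$n$), since the analysis takes place in $\mathbb{L}[j^W(T_{2n+1})]$ for a level $\leq 2n+1$ tree $W$, not a level $\leq 2n-1$ one.
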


\begin{lemma}
  \label{lem:discontinuity_1}
  Suppose $(P^{-},{(d,p,R)})$ is a partial level $\leq 2n+1$ tree and $P$ is a completion of $(P^{-}, {(d,p,R)})$. Suppose $W$ is a level $\leq 2n+1$ tree and $\sigma,\sigma'$ both factor $(P,W)$, $\sigma$ and $\sigma'$ agree on $\dom(P^{-})$, $\sigma'(d,p) = \pred(\sigma, W, (d,p))$. Then for any $\beta < j^{P^{-}}(\bolddelta{2n+1})$ such that $\cf^{\mathbb{L}[j^{P^{-}}(T_{2n+1})]}(\beta) = \seed^{P^{-}}_{\ucf(P^{-},(d,p,R))}$, we have
  \begin{displaymath}
    \sigma^W \circ j^{P^{-},P}_{\sup} (\beta) = (\sigma')^W_{\sup} \circ j^{P^{-},P} (\beta).
  \end{displaymath}
\end{lemma}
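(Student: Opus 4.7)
My plan is to represent $\beta$ via the level-$(2n+1)$ sharp and track how both sides transform this representation, identifying them through the defining property of $\pred(\sigma,W,(d,p))$.

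\emph{Step 1 (representation of $\beta$).} By the cofinality hypothesis together with the level-$(2n+1)$ analogue of Lemma~\ref{lem:W_signature}, I fix a representing function $g$ of $\beta$ whose essential uniform cofinality is $\ucf(P^-,(d,p,R))$, yielding a pointwise approximation
\[
g(\vec{\alpha}) \;=\; \sup_{\zeta} g_0(\vec{\alpha},\zeta)
\]
where $\zeta$ ranges over the coordinates encoded by the uniform cofinality and $g_0$ is order-preserving in $\zeta$. Equivalently, in the sharp formalism, $\beta = \tau^{M_{2n,\infty}^{-}}(c_{\vec{\xi}})$ for a Skolem term $\tau$ over $\mathcal{L}^{P^-}$, with the last indiscernible playing the role of the sup-threshold.

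\emph{Step 2 (unpacking both sides).} The map $j^{P^-,P}$ reinterprets $\tau$ with indiscernibles enumerated over $\dom(P)$, introducing a new indiscernible at $(d,p)$ whose $\mathbb{L}[j^P(T_{2n+1})]$-cofinality is $\seed^P_{(d,\mathbf{p})}$ by the induction hypotheses on regular cardinal structure. The sup-version $j^{P^-,P}_{\sup}$ caps the new coordinate strictly below $\seed^P_{(d,\mathbf{p})}$. After applying $\sigma^W$, the LHS becomes the $\tau_0$-approximation pushed to $W$, capped below $\sigma^W(\seed^P_{(d,\mathbf{p})})$. On the RHS, $(\sigma')^W_{\sup}$ applied to $j^{P^-,P}(\beta)$ produces the same $\tau_0$-approximation pushed to $W$ via $\sigma'$, but capped below $(\sigma')^W(\seed^P_{(d,\mathbf{p})}) = \seed^W_{\sigma'(d,p)} = \seed^W_{\pred(\sigma,W,(d,p))}$.

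\emph{Step 3 (matching via $\pred$).} The definition of $\pred(\sigma,W,(d,p))$ (its four clauses) selects precisely the $W$-node whose associated seed equals $\sup_{\xi<\seed^P_{(d,\mathbf{p})}}\sigma^W(\xi)$ inside $\mathbb{L}[j^W(T_{2n+1})]$: that is, $\pred(\sigma,W,(d,p))$ is the $<_{BK}$-maximal element of the relevant "downward" slice $\comp{d}{W}\se{\sigma(d,p),-}$ (or its description analogue), whose seed is the sup of $\sigma^W$ applied to all $P$-seeds below $(d,\mathbf{p})$. Hence both sides of the purported equation evaluate to the same Skolem term applied to $c_{\sigma(\vec{\xi}^-)}$ with the outermost sup taken up to $c_{\pred(\sigma,W,(d,p))}$. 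The equality then follows from the coherency property of the EM blueprint $0^{(2n+1)\#}$ together with the commutativity axioms~\ref{item:EM_commutativity_n} and general remarkability.

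The main obstacle is the case analysis: $\ucf(P^-,(d,p,R))$ falls into several syntactic patterns (the six clauses in the definition of uniform cofinality for a partial level $\leq 2n+1$ tree), each of which must be paired with the matching clause of $\pred(\sigma,W,(d,p))$. The delicate boundary cases, such as $p=((0))$ or $\comp{2n+1}{P}\se{p,+,R}=\emptyset$, require careful use of the induction hypotheses~(\ref{item:cardinals}:$n-1$) and~(\ref{item:regular_cardinals_Q}:$n-1$) on regular cardinal structure, together with the decomposition $(P^+,\sigma^+)$ from Lemma~\ref{lem:continuity} to translate between the "present node" analysis here and the "missing node" analysis there.
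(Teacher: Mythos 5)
The paper does not give an explicit proof of this lemma; it is presented (together with Lemmas~\ref{lem:W_signature}--\ref{lem:discontinuity_2}) as following ``by analyzing the representative functions of ordinals below $u^{(2n+1)}_{E(2n+1)}$\ldots in parallel to the level-1 scenario,'' referring to the arguments in the earlier papers of the series. Your proposal is in the same spirit, but as written it has genuine gaps.

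The crux of the lemma is exactly the step you relegate to Step~3, and it is not carried out. You assert that $\pred(\sigma,W,(d,p))$ ``selects precisely the $W$-node whose associated seed equals $\sup_{\xi<\seed^P_{(d,\mathbf{p})}}\sigma^W(\xi)$,'' and then invoke coherency, the commutativity axioms, and general remarkability to close the argument. But the identification of $\seed^W_{\pred(\sigma,W,(d,p))}$ with such a supremum is precisely the content one must prove, not a consequence of the four combinatorial clauses defining $\pred$; those clauses only pick out a $<_{BK}$-maximal node in a certain slice of $W$, and linking that to the cofinality structure of $\sigma^W_{\sup}$ requires the actual representing-function analysis (through the characterization of regular cardinals in (\ref{item:cardinals}:$n$) and the continuity analysis of factor maps). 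You have named the ingredients but not performed the matching of the six $\ucf$-clauses with the $\pred$-clauses, and you explicitly label this ``the main obstacle'' without resolving it.

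There is also a technical imprecision in Step~2 that makes the stated comparison suspicious on its face: you write that the LHS is ``capped below $\sigma^W(\seed^P_{(d,\mathbf{p})}) = \seed^W_{\sigma(d,p)}$'' while the RHS is ``capped below $\seed^W_{\pred(\sigma,W,(d,p))}$'' --- these are distinct cardinals (the $\pred$-seed is strictly below $\seed^W_{\sigma(d,p)}$), so a further argument is needed to reconcile the two bounds, and you do not supply one. You also conflate the new seed $\seed^P_{(d,\mathbf{p})}$ with its cofinality at a couple of points. The missing link is the observation that $j^{P^-,P}_{\sup}(\beta)$, thanks to the hypothesis $\cf^{\mathbb{L}[j^{P^-}(T_{2n+1})]}(\beta) = \seed^{P^-}_{\ucf(P^-,(d,p,R))}$, lands with $\mathbb{L}[j^P(T_{2n+1})]$-cofinality exactly the new seed $\seed^P_{(d,\mathbf{p})}$, after which one must track where $\sigma^W$ sends that cofinal sup and why it coincides with the sup over $(\sigma')^W$ of everything below $j^{P^-,P}(\beta)$; the latter comparison is where $\sigma'(d,p)=\pred(\sigma,W,(d,p))$ does its work, and it only falls out after a case-by-case computation of the representing functions. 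Your appeal to the $(P^+,\sigma^+)$ decomposition of Lemma~\ref{lem:continuity} is not immediately applicable either: there $P^+$ extends $P$, whereas here $P$ extends $P^-$, so the two setups are reversed and the ``translation'' you mention needs to be made explicit.
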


\begin{lemma}
 \label{lem:discontinuity_2} Suppose $(P,(d,p,R))$ is a partial level $\leq 2n+1$ tree, $\ucf(P,(d,p,R)) = (d^{*}, \mathbf{p}^{*})$ and $\sigma$ factors $(P,W)$. Suppose $\beta < j^P(\bolddelta{2n+1})$ and either
  \begin{enumerate}
  \item $d=0$, $P^{+} = P$, $\sigma' = \sigma$, $\cf^{\mathbb{L}[j^P(T_{2n+1})]}(\beta) = \omega$, or
  \item $d > 0$, $P^{+}$ is a completion of $P$, $\sigma'$ factors $(P^{+}, W)$, $\sigma = \sigma' \res \dom(P)$, $\sigma'(d,p) = \pred(\sigma, T, (d^{*},\mathbf{p}^{*})) $, $\cf^{\mathbb{L}[j^P(T_{2n+1})]}(\beta) = \seed^P_{(d^{*}, \mathbf{p}^{*})}$.
  \end{enumerate}
  Then
  \begin{displaymath}
    \sigma^W(\beta) = (\sigma')^W_{\sup} \circ j^{P,P^{+}}(\beta).
  \end{displaymath}
\end{lemma}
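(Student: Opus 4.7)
The plan is to split along the two bullets, with Case 1 being immediate and Case 2 requiring the bulk of the work.

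In Case 1 ($d=0$) the hypotheses collapse: $P^{+}=P$, $\sigma'=\sigma$, and $j^{P,P^{+}}$ is the identity, so what must be shown is $\sigma^{W}(\beta)=\sigma^{W}_{\sup}(\beta)$. Since $\cf^{\mathbb{L}[j^{P}(T_{2n+1})]}(\beta)=\omega$, pick a cofinal $\omega$-sequence in $\beta$ lying in $\mathbb{L}[j^{P}(T_{2n+1})]$; by elementarity of $\sigma^{W}$ its image is a cofinal $\omega$-sequence in $\sigma^{W}(\beta)$, forcing $\sigma^{W}(\beta)=\sigma^{W}_{\sup}(\beta)$.

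For Case 2 ($d>0$), I would first observe that the composite $\sigma'\circ\id_{P}$ factors $(P,W)$ and equals $\sigma$ as a function on $\dom(P)$, so by the composition rule for factor maps (Axiom~\ref{item:EM_XTQ_factor_n}(c) of Definition~\ref{def:EM}) one has
\begin{displaymath}
\sigma^{W}=(\sigma')^{W}\circ(\id_{P})^{P^{+}}=(\sigma')^{W}\circ j^{P,P^{+}}.
\end{displaymath}
Hence $\sigma^{W}(\beta)=(\sigma')^{W}(j^{P,P^{+}}(\beta))$, and the task reduces to showing that $(\sigma')^{W}$ is continuous at $j^{P,P^{+}}(\beta)$. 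Representing $\seed^{P}_{(d^{*},\mathbf{p}^{*})}$ by the projection function $\vec{\alpha}\mapsto\alpha_{(d^{*},\mathbf{p}^{*})}$ and lifting that representative unchanged to $\mu^{P^{+}}$, \Los{} yields $j^{P,P^{+}}(\seed^{P}_{(d^{*},\mathbf{p}^{*})})=\seed^{P^{+}}_{(d^{*},\mathbf{p}^{*})}$, so $\cf^{\mathbb{L}[j^{P^{+}}(T_{2n+1})]}(j^{P,P^{+}}(\beta))=\seed^{P^{+}}_{(d^{*},\mathbf{p}^{*})}$. By Lemma~\ref{lem:continuity}(1), the required continuity of $(\sigma')^{W}$ at $j^{P,P^{+}}(\beta)$ is equivalent to continuity of $(\sigma',W)$ at the regular exdesc $(d^{*},\mathbf{p}^{*})$ of $P^{+}$.

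To close the argument I would inspect the clauses of the definition of continuity of a factor map at an exdesc. The hypothesis $\sigma'(d,p)=\pred(\sigma,W,(d^{*},\mathbf{p}^{*}))$ places the predecessor node into $\ran(\sigma')$, so the relevant ``max'' witnessing the continuity condition (either the $t'$ of clause 3 or the $\comp{d^{*}}{\sigma'}(q)\concat (a)$ of clause 4, according to whether $\mathbf{p}^{*}\in\desc(P)$ or not) lies in $\ran(\comp{d^{*}}{\sigma'})$, giving the sought continuity. The main obstacle is degree bookkeeping: since rule~1 of the $\ucf$-definition can recursively drop the degree, one may have $d^{*}<d$, so the BK-predecessor lives at a strictly lower level than the new node $(d,p)$; following the $\ucf$-rules down level by level one verifies that the tree-completion structure of $P^{+}$ forces the image $\sigma'(d,p)$ to supply exactly the max needed at degree $d^{*}$. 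This degree-tracking parallels the level-$2$ and level-$3$ analogs carried out in \cite{sharpII,sharpIII} and is where the bulk of the case-checking resides.
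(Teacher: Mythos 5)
Your proof takes a different route from what the paper suggests. The paper gives no explicit argument for this lemma; it states that Lemmas~\ref{lem:W_signature}--\ref{lem:discontinuity_2} follow ``by analyzing the representative functions of ordinals below $u^{(2n+1)}_{E(2n+1)}$ \dots\ in parallel to the level-1 scenario,'' i.e., via the direct computation with representing functions carried out at level~$1$ in \cite{sharpII,sharpIII}. You instead reduce the claim to Lemma~\ref{lem:continuity}: after handling $d=0$ directly via cofinality $\omega$ (which is correct), you write $\sigma^{W}=(\sigma')^{W}\circ j^{P,P^{+}}$ and observe that the lemma becomes ``$(\sigma')^{W}$ is continuous at $j^{P,P^{+}}(\beta)$,'' which Lemma~\ref{lem:continuity}(1) turns into the combinatorial statement that $(\sigma',W)$ is continuous at $(d^{*},\mathbf{p}^{*})$, and that follows from $\sigma'(d,p)=\pred(\sigma,W,(d^{*},\mathbf{p}^{*}))$. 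This is a clean and valid reduction, and in my view a more economical organization than a fresh representative-function argument; it does rely on Lemma~\ref{lem:continuity} having been established independently, but there is no circularity since part~(1) of that lemma is proved by its own direct analysis.

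There is, however, a genuine error in your last paragraph, although it does not break the proof. You claim ``rule~1 of the $\ucf$-definition can recursively drop the degree, one may have $d^{*}<d$.'' This is false. Rule~1 is the recursion $\ucf(P,(d,p,Z))=\ucf(\comp{\leq 2n}{P},(d,p,Z))$, which leaves the pair $(d,p)$ untouched and merely shrinks the ambient tree; the degree $d$ is passed down unchanged. The only place the degree of the output differs from $d$ is the even-level clause~2, where an odd $d$ with $p$ being $\prec$-maximal produces $d^{*}=d+1$; in all other clauses $d^{*}=d$. Thus $d^{*}\in\{d,d+1\}$, never $d^{*}<d$. Correspondingly, $\pred(\sigma,W,(d^{*},\mathbf{p}^{*}))$ has degree $d^{*}$ (clauses~1, 3, 4) or $d^{*}-1$ (clause~2), so it always matches the degree $d$ of the new node, and there is no ``strictly lower level'' difficulty to work through. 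The remaining verification is therefore shorter than you describe, but you should also add one sentence justifying that $(d^{*},\mathbf{p}^{*})$ is still a \emph{regular} extended $P^{+}$-description and that $j^{P,P^{+}}(\seed^{P}_{(d^{*},\mathbf{p}^{*})})=\seed^{P^{+}}_{(d^{*},\mathbf{p}^{*})}$: both hold because adding the leaf $(d,p)$ neither removes any extended $P$-description nor changes its (dis)continuity type, but these facts are what license the application of Lemma~\ref{lem:continuity}(1) and deserve to be stated.
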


(\ref{item:rep_Q}:$n$) follows from (\ref{item:odd_wellfounded}:$n$). 
The proofs of (\ref{item:Q_ordinal_cont_type}:$n$)(\ref{item:q_respecting}:$n$)  generalize their level-1 versions, appealing to the general remarkability of level-($2n+1$) sharps when necessary.


\section{The level-($2n+2$) sharp}
\label{sec:level-2n+2-sharp}

From now on, we assume $\boldDelta{2n+2}$-determinacy. We will prove (\ref{item:bk_km}:$n$)-(\ref{item:represent_another}:$n$).

For $x \in \mathbb{R}$, a putative $x$-$(2n+1)$-sharp is a remarkable, level $\leq 2n$ correct level-($2n+1$) EM blueprint over $x$ that satisfies the universality of level $\leq 2n$ ultrapowers axiom. Suppose $x^{*}$ is a putative $x$-3-sharp.  
For any limit ordinal $\alpha < \bolddelta{2n+1}$, we can build an EM model
\begin{displaymath}
\mathcal{M}^{*}_{x^{*},\alpha}
\end{displaymath}
as follows. Let $R$ be a level-($2n+1$) tree such that $\llbracket \emptyset \rrbracket_R = \alpha$. Then $\mathcal{M}^{*}_{x^{*},\alpha} = (\mathcal{M}^{*}_{x^{*},R})_{\infty}$. This definition is independent of the choice of $R$. We say that $x^{*}$ is \emph{$\alpha$-iterable} iff $\alpha$ is in the wellfounded part of $\mathcal{M}^{*}_{x^{*},\alpha}$. 

A \emph{putative level-(2n+1) sharp code for an increasing function} is $w = \corner{\gcode{\tau}, x^{*}}$ such that $x^{*}$ is a putative $x$-(2n+1)-sharp, $\tau$ is a unary $\mathcal{L}^{\underline{x}}$-Skolem term and
\begin{displaymath}
``\forall v,v'( (v , v'\in \ord \wedge v < v' )\to  (\tau(v) \in \ord  \wedge \tau(v) < \tau(v')))\text{''}
\end{displaymath}
is true in $x^{*}(\emptyset)$.  The statement `` $\corner{\gcode{\tau}, x^{*}}$ is a putative level-(2n+1) sharp code for an increasing function, $x^{*}$ is $\alpha$-iterable, $r$ codes the order type of $\tau^{\mathcal{M}_{x^{*}, \alpha}}(\alpha)$'' about  $(\corner{\gcode{\tau}, x^{*}},r)$  is $\boldsigma{2n+1}$ in the code of $\alpha$. 
In addition, when $x^{*} = x^{(2n+1)\#}$, $\corner{\gcode{\tau}, x^{*}}$ is called a \emph{(true) level-(2n+1) sharp code for an increasing function}.

The proof of (\ref{item:bk_km}:$n$)-(\ref{item:evensharp}:$n$) is basically a copy of the arguments in \cite{sharpIII}.

By (\ref{item:bk_km}:$n$), every subset of $u^{(2n+1)}_{E(2n+1)}$ in $\mathbb{L}_{\bolddelta{2n+3}}[T_{2n+2}]$ is $\boldDelta{2n+3}$. 
We use this and Moschovakis Coding Lemma \cite{mos_dst} to prove 
(\ref{item:subset_does_not_increase}:$n$). 
  Suppose  $A \subseteq \bolddelta{2n+1}$ is in $\mathbb{L}_{\bolddelta{2n+3}}[T_{2n+2}]$. Suppose $B, C \subseteq \mathbb{R}$ are $\Pi^1_{2n+2}(x)$ subsets of $\mathbb{R}^2$ such that ($w \in \WO^{(2n+1)}\wedge \sharpcode{w} \in A$) iff $\exists z ((w,z) \in B)$ iff $\neg \exists z ((w,z) \in C)$. By Moschovakis Coding Lemma, there is a real $y$ and a $\Sigma^1_{2n+2}(y)$ set $D \subseteq \mathbb{R}^2$ satisfying:
  \begin{itemize}
  \item If $(w,z) \in D$ then $w \in \WO^{(2n+1)}$ and there is $w' \in \WO^{(2n+1)}$ such that $\wocode{w} = \wocode{w'}$ and $(w',z) \in B \cup C$,
  \item If $w  \in \WO^{(2n+1)}$ then there is $w'  \in \WO^{(2n+1)}$ and $z$ such that $(w',z) \in B \cup C$. 
  \end{itemize}
Then $A$ is $\Sigma^1_{2n+2}(x,y)$ and hence $A \in L[T_{2n+1},x,y]$.



(\ref{item:strong_partition}:$n$) is a simple generalization of the level-2 partition property of $\omega_1$ in \cite{sharpII}. The idea of partially iterable level-$(2n-1)$ sharps is used in the proof. 
(\ref{item:Q_direct_limit_wf}:$n$) follows from (\ref{item:rep_Q}:$n$). (\ref{item:ultrapower_bound}:$n$) is a simple generalization of the $n=0$ case, using (\ref{item:q_respecting}:$n$) when necessary. 

We now prove (\ref{item:measure_analysis}:$n$). To save notations, we prove the case $n=1$. The statement is: 
\begin{quote}
  Suppose $R$ is a $\Pi^1_3$-wellfounded level-3 tree. Suppose $\mu$ is a nonprincipal $\mathbb{L}_{\bolddelta{5}}[T_4]$-measure on $j^R(\bolddelta{3})$. Then there
are functions $g,h \in \mathbb{L}[j^R(T_{3})]$, a finite level $\leq 4$ tree $Q$, nodes $(d_1,q_1),\dots,(d_k,q_k) \in \dom(Q)$, such that $h : j^P(\bolddelta{3}) \to ( j^P(\bolddelta{3}) )^k$, $g:(j^P(\bolddelta{3}))^k \to j^P(\bolddelta{3})$, $h$ is 1-1 a.e.\ ($\nu$), $g$ is 1-1 a.e.\ ($\mu^Q_{\overrightarrow{(d,q)}}$), $g = h^{-1}$ a.e.\ ($\nu$), and $A \in \mu^Q_{\overrightarrow{(d,q)}}$ iff $(h^{-1})''A \in \nu$.
\end{quote}
Take the restricted ultrapower
\begin{displaymath}
  j : \mathbb{L}[j^R(T_3)] \to \Ult (\mathbb{L}[j^R(T_3)], \mu) =  \mathbb{L}[j\circ j^R(T_3)].
\end{displaymath}
$\beta< j \circ j^R (\bolddelta{3})$ is called (only in this proof) a uniform indiscernible iff $\beta $ is represented by some $f$ in this ultrapower such that for any $x \in \mathbb{R}$, for $\nu$-a.e.\ $\xi$,  $f(\xi) \in j^R(I^{(\leq 3)}_x)$.
\begin{claim}
  \label{claim:uniform_indis_closed}
  The set of uniform indiscernibles is closed below $j \circ j^R (\bolddelta{3})$.
\end{claim}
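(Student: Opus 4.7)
The plan is to show that any limit ordinal $\beta < j \circ j^R(\bolddelta{3})$ which is a supremum of uniform indiscernibles must itself be a uniform indiscernible, by producing a single representing function $f$ of $\beta$ whose values land in $j^R(I^{(\leq 3)}_x)$ on a $\mu$-measure one set for every real $x$ simultaneously.

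First I would pick a cofinal sequence $(\beta_i)_{i<\lambda}$ in $\beta$ of uniform indiscernibles, with $\lambda = \cf^{\Ult(\mathbb{L}[j^R(T_3)],\mu)}(\beta)$; since $\beta < j \circ j^R(\bolddelta{3})$, we have $\lambda < j \circ j^R(\bolddelta{3})$. For each $i$, fix a witness $f_i$ of $\beta_i$, and define $f(\xi) = \sup_{i<\lambda} f_i(\xi)$ pointwise. A short \Los{}-style argument, together with the cofinal choice of $(\beta_i)$, yields $[f]_\mu = \beta$.

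Next, for each real $x$, the set $I^{(\leq 3)}_x$ is closed below $u^{(3)}_{E(3)}$ (as constructed near the end of Section 3, where it is enumerated in increasing order), so $j^R(I^{(\leq 3)}_x)$ is closed in $\mathbb{L}[j^R(T_3)]$. Letting $A_i^x = \{\xi : f_i(\xi) \in j^R(I^{(\leq 3)}_x)\}$, each $A_i^x$ has $\mu$-measure one by choice of $f_i$. By the completeness of $\mu$ (which, as a nonprincipal $\mathbb{L}_{\bolddelta{5}}[T_4]$-measure on $j^R(\bolddelta{3})$, exceeds any cardinal below $j^R(\bolddelta{3})$ thanks to (\ref{item:strong_partition}:$n-1$)), the intersection $\bigcap_{i < \lambda} A_i^x$ still has $\mu$-measure one. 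On this intersection $f(\xi)$ is a supremum of a cofinal subset of $j^R(I^{(\leq 3)}_x)$, so by closure $f(\xi) \in j^R(I^{(\leq 3)}_x)$.

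The main obstacle is establishing the $\lambda$-completeness of $\mu$ uniformly across all reals $x$; for each fixed $x$ the closure is essentially automatic via the auxiliary function $g_x(\xi) = \sup(j^R(I^{(\leq 3)}_x) \cap (f_0(\xi)+1))$ (where $f_0$ is any representative of $\beta$), but producing one witnessing function $f$ for all reals at once forces the completeness step. If this route is obstructed, an alternative is to prove directly that the set of uniform indiscernibles in $j \circ j^R(\bolddelta{3})$ is $j(J)$ for a closed subset $J \subseteq j^R(\bolddelta{3})$ in $\mathbb{L}[j^R(T_3)]$, namely the diagonal intersection of the $j^R(I^{(\leq 3)}_x)$'s, which lies in $\mathbb{L}[j^R(T_3)]$ by a Coding-Lemma argument built on (\ref{item:coding}:$n-1$), reducing closure to elementarity of $j$.
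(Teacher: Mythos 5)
There is a genuine gap, and it sits exactly where you flagged it: the $\lambda$-completeness of $\mu$. Nothing in the hypotheses makes $\mu$ more than an ultrafilter on the subsets of $j^R(\bolddelta{3})$ lying in $\mathbb{L}_{\bolddelta{5}}[T_4]$; its completeness can be as small as $\omega_1$ (for instance, transport the club measure on $\omega_1$ to a subset of $j^R(\bolddelta{3})$ of order type $\omega_1$), and (\ref{item:strong_partition}:$n-1$) only gives strong completeness-type properties for the specific induced measures $\mu^Q$, not for an arbitrary nonprincipal $\mathbb{L}_{\bolddelta{5}}[T_4]$-measure. Since $\lambda=\cf^{\Ult(\mathbb{L}[j^R(T_3)],\mu)}(\beta)$ can far exceed the completeness of $\mu$, you cannot intersect the $\lambda$ many sets $A^x_i$. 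Two further problems occur at the same step: the sequence $(f_i)_{i<\lambda}$ is chosen externally, so $f=\sup_i f_i$ need not belong to $\mathbb{L}[j^R(T_3)]$ and hence need not represent anything in the restricted ultrapower; and even when it does, one only gets $[\sup_i f_i]_{\mu}\geq\sup_i[f_i]_{\mu}=\beta$, equality being essentially the completeness you are trying to invoke, so the ``short \Los{}-style argument'' is circular. The fallback route is also off: granting (as the paper implicitly does) that the relevant sets are measured by $\mu$, the set of uniform indiscernibles is $\bigcap_{x}j\bigl(j^R(I^{(\leq 3)}_x)\bigr)$, which in general properly contains $j(J)$ for any single set $J$ formed inside $\mathbb{L}[j^R(T_3)]$ (and a ``diagonal intersection'' indexed by all reals has no meaning here), so closure cannot be reduced to elementarity of $j$ applied to one set.

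The paper's proof goes the other way and needs none of this: arguing contrapositively, if $\beta=[f]_{\mu}$ is \emph{not} a uniform indiscernible, fix a single real $x$ with $f(\xi)\notin j^R(I^{(\leq 3)}_x)$ for $\mu$-a.e.\ $\xi$, and set $g(\xi)=\max\bigl(f(\xi)\cap j^R(I^{(\leq 3)}_x)\bigr)$; then $[g]_{\mu}<\beta$ and no ordinal in the interval $([g]_{\mu},\beta)$ is a uniform indiscernible (any representative of such an ordinal lands a.e.\ strictly between $g(\xi)$ and $f(\xi)$, hence outside $j^R(I^{(\leq 3)}_x)$), so $\beta$ is not a limit of uniform indiscernibles. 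Only finitely many sets ever need to be measured. If you want a direct argument closer in spirit to yours, the correct cheap version is: each $j\bigl(j^R(I^{(\leq 3)}_x)\bigr)$ is a closed set of ordinals (closedness of a set of ordinals is absolute), and the uniform indiscernibles are the intersection of these closed sets taken in $V$ over all reals, hence closed --- but that intersection is not an object of $\mathbb{L}[j^R(T_3)]$ and should not be pushed through $j$.
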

\begin{proof}
  Suppose that $ \beta < j^R(\bolddelta{3})$ is not a uniform indiscernible and $\beta = [f]_{\nu}$. Pick $x$ such that for $\nu$-a.e.\ $\xi$, $f(\xi) \notin j^R(I^{(\leq 3)}_x)$. Let $g(\xi)=\max(f(\xi) \cap j^R(I^{(\leq 3)}_x))$. Then $[g]_{\nu} < \beta$ and  any $\gamma \in ([g]_{\nu}, \beta)$ is not a uniform indiscernible. So $\beta$ cannot be a limit of uniform indiscernibles. 
\end{proof}

\begin{claim}
  \label{claim:next_uniform_indis}
  If $[f]_{\nu} < j \circ j^R (\bolddelta{3})$ is not a uniform indiscernible, then there is a real $x$ such that putting $g(\xi)=\max(f(\xi) \cap j^R(I^{(\leq 3)}_x))$, either $[g]_{\nu} = 0$ or $[g]_{\nu}$ is a uniform indiscernible.
\end{claim}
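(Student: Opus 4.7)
The plan is to define, for each real $x$, the function $g_x(\xi) = \max(f(\xi) \cap j^R(I^{(\leq 2n)}_x))$ with the convention that $g_x(\xi) = 0$ when the intersection is empty, and to pick the real $x^*$ that minimizes $[g_x]_\nu$ among all reals; then show that this minimizing $g^* := g_{x^*}$ automatically satisfies the conclusion. The key monotonicity observation is that $x \mapsto I^{(\leq 2n)}_x$ is antitone under Turing reducibility: if $y \geq_T x$, then $I^{(\leq 2n)}_y \subseteq I^{(\leq 2n)}_x$, since adding parameters to the base real can only shrink the set of relative indiscernibles. Consequently $j^R(I^{(\leq 2n)}_y) \subseteq j^R(I^{(\leq 2n)}_x)$, and so $g_y(\xi) \leq g_x(\xi)$ pointwise, giving $[g_y]_\nu \leq [g_x]_\nu$. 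Each $g_x$ lies in $\mathbb{L}[j^R(T_{2n+1})]$ (since $I^{(\leq 2n)}_x$ is definable there from $x$), so the set $\{[g_x]_\nu : x \in \mathbb{R}\}$ is a well-defined nonempty set of ordinals and has a minimum $\gamma^*$; fix any $x^*$ with $[g_{x^*}]_\nu = \gamma^*$.

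Now I would argue that $g^*$ satisfies the conclusion. If $\gamma^* = 0$ we are done, so assume $[g^*]_\nu > 0$ and suppose towards a contradiction that $[g^*]_\nu$ is not a uniform indiscernible. Then by definition there exists a real $y$ such that the set $A = \{\xi : g^*(\xi) \notin j^R(I^{(\leq 2n)}_y)\}$ is $\nu$-positive; since $\nu$ is an ultrafilter, $A \in \nu$. Let $x' = x^* \oplus y$. Then $x' \geq_T x^*$ gives $g_{x'}(\xi) \leq g^*(\xi)$ everywhere, and $x' \geq_T y$ gives $j^R(I^{(\leq 2n)}_{x'}) \subseteq j^R(I^{(\leq 2n)}_y)$, so on every $\xi \in A$ we have $g^*(\xi) \notin j^R(I^{(\leq 2n)}_{x'})$. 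This forces $g_{x'}(\xi) < g^*(\xi)$ strictly on $A$: the max over the smaller indiscernible set $j^R(I^{(\leq 2n)}_{x'})$ intersected with $f(\xi)$ must miss the value $g^*(\xi)$ itself. Since $A \in \nu$, we conclude $[g_{x'}]_\nu < \gamma^*$, contradicting minimality. Hence $g^*$ is either $0$ or a uniform indiscernible.

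The main obstacle I anticipate is checking that the objects $g_x$ and $j^R(I^{(\leq 2n)}_x)$ genuinely live in $\mathbb{L}[j^R(T_{2n+1})]$ uniformly in $x$, so that representatives and ordinal comparisons make sense inside the model where $\nu$ is a measure; this should follow from the constructions in Section~\ref{sec:synt-prop} and standard absoluteness, but warrants a sentence. A secondary point is the antitonicity $I^{(\leq 2n)}_y \subseteq I^{(\leq 2n)}_x$ for $y \geq_T x$; this is inherited from the definition of the level $\leq 2n$ indiscernibles as the image of $I^{(\leq 2n)}$ under $c^{(2n+1)}_{x,\cdot}$-type operators applied to a club depending monotonically on $x$, which is standard but should be cited back to the last section of \cite{sharpIII}. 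Once those two bookkeeping items are in place, the argument above gives $g^*$ as the witness needed in the statement, and Claims~\ref{claim:uniform_indis_closed} and~\ref{claim:next_uniform_indis} together show that the uniform indiscernibles form a closed unbounded-like substructure adequate for the subsequent measure-analysis argument.
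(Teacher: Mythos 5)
Your proposal is correct, and at bottom it is the same descent/wellfoundedness argument as the paper's, organized differently. The paper argues by contradiction and iterates the operation $f \mapsto \max\bigl(f(\cdot)\cap j^R(I^{(\leq 3)}_{x})\bigr)$ to produce an infinite descending chain $[f]_{\nu} > [f_0]_{\nu} > [f_1]_{\nu} > \dots$, whereas you take the minimum of $\set{[g_x]_{\nu}}{x \in \mathbb{R}}$ up front and perform a single descent step against minimality. The one substantive ingredient you invoke that the paper's three-line proof does not mention is the Turing antitonicity $x \leq_T y \Rightarrow I^{(\leq 3)}_y \subseteq I^{(\leq 3)}_x$, which you need so that the composition of two max-operations is again of the single-application form $g_{x'}$ with $x' = x^{*}\oplus y$; this is the higher-level analogue of the classical fact about Silver indiscernibles and should indeed be checked against the construction of the relativized indiscernibles in the last section of \cite{sharpIII} (note also the cosmetic index slip: in this proof, the $n=1$ case, the relevant set is $I^{(\leq 3)}_x$, not $I^{(\leq 2n)}_x$). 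In fairness, a rigorous reading of the paper's ``continuing this way'' needs essentially the same fact, since the reductio hypothesis only covers functions of the form $\max\bigl(f(\cdot)\cap j^R(I^{(\leq 3)}_{x})\bigr)$ for the \emph{original} $f$, and one must choose the reals increasing in Turing degree to see that the later iterates fall under it; so your formulation is, if anything, the more careful one. The remaining caveats you flag --- that the sets involved are measured by $\nu$ so ``not a.e.\ in'' upgrades to ``a.e.\ not in'', that $\max$ is attained because $I^{(\leq 3)}_x$ is closed, and that the representing functions live where the ultrapower can see them --- are shared with the paper's own proof and do not affect the comparison.
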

\begin{proof}
  Suppose not. Pick $x_0$ such that for $\nu$-a.e.\ $\xi$, $f(\xi) \notin j^R(I^{(\leq 3)}_{x_0})$. Let $f_0(\xi)=\max(f(\xi) \cap j^R(I^{(\leq 3)}_{x_0}))$. Then $[f_0]_{\nu}$ is not a uniform indiscernible and $0< [f_0]_{\nu} < [f]_{\nu}$. Continuing this way, we obtain a descending chain of ordinals $[f]_{\nu} > [f_0]_{\nu} > [f_1]_{\nu} > \dots$. 
\end{proof}

\begin{claim}
  \label{claim:uniform_indis_generate}
  If $\alpha < j  \circ j^R(\bolddelta{3})$ then there is a real $z$, an $\mathcal{L}$-Skolem term $\tau$ and uniform indiscernibles $\beta_1,\dots,\beta_k \leq \alpha$ such that 
  \begin{displaymath}
    \alpha = \tau^{L[j \circ j^R(T_3), z]} (z, \beta_1,\dots,\beta_k)  .
  \end{displaymath}
\end{claim}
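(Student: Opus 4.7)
The plan is to imitate the proof of Claim~\ref{claim:next_uniform_indis}, but now pushing through to an actual Skolem term representation rather than just bounding the next indiscernible. If $\alpha$ is itself a uniform indiscernible, take $k=1$, $\beta_1=\alpha$ and $\tau$ the identity term; so assume $\alpha$ is not a uniform indiscernible. Applying Claim~\ref{claim:next_uniform_indis}, fix a real $x$ and a function $f$ with $[f]_{\nu}=\alpha$ such that, setting $g(\xi)=\max(f(\xi)\cap j^R(I^{(\leq 3)}_{x}))$, the ordinal $\beta\defeq[g]_{\nu}$ is either $0$ or a uniform indiscernible; also fix a function $h$ representing $h(\xi)=\min\big(j^R(I^{(\leq 3)}_{x})\setminus(f(\xi)+1)\big)$, which represents a uniform indiscernible $\gamma\leq j\circ j^R(\bolddelta{3})$ with $\beta<\alpha<\gamma$.

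Next, I would use the description of ordinals in $j^R(M^{-}_{2n,\infty}(x))=\mathbb{L}[j^R(T_{3}),x]$ in terms of the indiscernibles $j^R(I^{(\leq 3)}_{x})$, as established in the last subsection of \cite{sharpIII} (lifted through $j^R$). For each $\xi$ in a $\nu$-one set, $f(\xi)$ lies strictly between two consecutive elements $g(\xi)<h(\xi)$ of $j^R(I^{(\leq 3)}_{x})$, so by the remarkability/indiscernibility properties of $x^{(2n+1)\#}$ (invoked for $n=1$) the ordinal $f(\xi)$ is the value at $x$ and indiscernibles $\leq g(\xi)$ of a Skolem term: there are an $\mathcal{L}$-Skolem term $\tau_{\xi}$ and a finite sequence $(b_{\xi,1},\dots,b_{\xi,k_\xi})$ of elements of $j^R(I^{(\leq 3)}_{x})$ with $b_{\xi,k_\xi}\leq g(\xi)$ and
\begin{displaymath}
f(\xi)=\tau_{\xi}^{j^R(M^{-}_{2n,\infty}(x))}(x,b_{\xi,1},\dots,b_{\xi,k_\xi}).
\end{displaymath}
The reason parameters above $g(\xi)$ can be discarded is exactly the general remarkability recalled in Section~\ref{sec:synt-prop}: if the term needed a parameter $c>g(\xi)$ (so $c\geq h(\xi)$), then indiscernibility of $c$ over $g(\xi)$ would force the value of the term to jump past $h(\xi)$, contradicting $f(\xi)<h(\xi)$.

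Now I would use countable additivity of $\nu$ to remove the dependence on $\xi$: since there are only countably many pairs $(\gcode{\tau},k)$, there is a single $(\gcode{\tau},k)$ witnessing the above for $\nu$-a.e.\ $\xi$. Fix such $\tau$ and $k$, and for each $i\leq k$ let $b_i(\xi)$ be the $i$-th parameter obtained above, so each $b_i$ represents an ordinal $\beta_i\defeq[b_i]_{\nu}$. Since $b_i(\xi)\in j^R(I^{(\leq 3)}_{x})$ $\nu$-a.e., $\beta_i$ is a uniform indiscernible in the sense of this proof; since $b_i(\xi)\leq g(\xi)$ $\nu$-a.e., we get $\beta_i\leq\beta<\alpha$. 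Applying \L{}os' theorem to $j\colon\mathbb{L}[j^R(T_3)]\to\mathbb{L}[j\circ j^R(T_3)]$ (relativized to $x$) yields
\begin{displaymath}
\alpha=\tau^{\mathbb{L}[j\circ j^R(T_3),x]}(x,\beta_1,\dots,\beta_k),
\end{displaymath}
which is the required representation with $z=x$.

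The main obstacle is the last sentence of the second paragraph: arguing that the Skolem term $\tau_{\xi}$ can be chosen to use only parameters at most $g(\xi)$. This is where the remarkability axiom for the level-$(2n+1)$ sharp does the real work, and where one must carefully distinguish between ``indiscernibles appearing in the generating term'' and ``indiscernibles bounding the value''. Everything else (closure of the set of uniform indiscernibles, countable-additivity uniformization of $\tau_{\xi}$, and passage through \L{}os') is routine once this step is secured.
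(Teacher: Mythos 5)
There is a genuine gap, and it is not the one you flag as the main obstacle. In this proof ``uniform indiscernible'' is defined by a universal quantifier over reals: $\beta=[b]_{\nu}$ is a uniform indiscernible iff for \emph{every} real $y$, $b(\xi)\in j^R(I^{(\leq 3)}_{y})$ for $\nu$-a.e.\ $\xi$. Your construction produces parameters $b_i$ with $b_i(\xi)\in j^R(I^{(\leq 3)}_{x})$ for the single real $x$ fixed at the outset (the one supplied by Claim~\ref{claim:next_uniform_indis}), and from this you conclude that $\beta_i=[b_i]_{\nu}$ ``is a uniform indiscernible in the sense of this proof.'' That inference is false in general: an ordinal that is an indiscernible relative to $x$ need not be one relative to a more complicated real $y$ (already at level one, the least Silver indiscernible of $L[x]$ is countable and is not an indiscernible for $L[y]$ when $y$ codes it), so the $\beta_i$ you obtain need not be uniform indiscernibles at all, and the statement of Claim~\ref{claim:uniform_indis_generate} is not established. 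The same objection applies to your auxiliary $[h]_{\nu}$. By contrast, the step you single out as the crux --- replacing parameters above $g(\xi)$ using remarkability --- is comparatively harmless; the surjection terms between consecutive indiscernibles recalled in Section~\ref{sec:synt-prop} even give the pointwise representation you want (over $x^{(3)\#}$ rather than $x$), but only with $x$-indiscernibles as parameters, which is exactly where the argument breaks.

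The paper's proof is organized precisely to avoid this: it inducts on $\alpha$ \emph{in the ultrapower}. At each step Claim~\ref{claim:uniform_indis_closed} and Claim~\ref{claim:next_uniform_indis} are used to pick a real $x$ (depending on the current ordinal) so that $[g]_{\nu}$, with $g(\xi)=\max(f(\xi)\cap j^R(I^{(\leq 3)}_{x}))$, is already $0$ or a genuine uniform indiscernible; the definable surjection from $g(\xi)$ onto $\min(j^R(I^{(\leq 3)}_x)\setminus f(\xi))$ then writes $\alpha=\tau^{L[j\circ j^R(T_3),x^{3\#}]}(x^{3\#},[g]_{\nu},\gamma)$ with a single extra parameter $\gamma<[g]_{\nu}$, which is an arbitrary smaller ordinal, not an indiscernible; the induction hypothesis is applied to $\gamma$ with a possibly different real $z$, and the two terms (and reals) are combined. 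If you want to salvage your plan, you would have to recurse on your non-uniform parameters in just this way, i.e.\ you are led back to the paper's induction; the one-shot uniformization over a fixed $x$ followed by \L{}o\'{s} cannot deliver parameters that are uniform indiscernibles.
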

\begin{proof}
Suppose without loss of generality that $\alpha$ is not a uniform indiscernible.  We show 
  by induction on $\alpha$.  Let $\alpha = [f]_{\nu}$. If $\alpha<$the smallest uniform indiscernible, by Claim~\ref{claim:next_uniform_indis}, 
there is $x $ such that for $\nu$-a.e.\  $\xi$, $f(\xi) < \min( I^{(\leq 3)}_x)$. 
By the analysis of level $\leq 2$ indiscernibles, there is a term $\tau$ such that
  \begin{displaymath}
    \tau^{L[j^R(T_3),x^{3\#}]}  ( x^{3\#}, \cdot)
  \end{displaymath}
defines a surjection from $\omega$ onto $I^{(\leq 3)}_x$. So there is $l < \omega$ such that
\begin{displaymath}
  \alpha = \tau^{L[j \circ j^R(T_3), x^{3\#}]}(x^{3\#}, l)
\end{displaymath}
and we are done.

If $\beta$ is the largest uniform indiscernible below $\alpha$, by Claims~\ref{claim:uniform_indis_closed}-\ref{claim:next_uniform_indis}, there is $x$ and $g(\xi) = \max(f(\xi) \cap j^R(I^{(\leq 3)}_x))$ such that $[g]_{\nu}$ is the largest uniform indiscernible below $\alpha$. By the analysis of level $\leq 2$ indiscernibles and level-3 indiscernibles, there is a term $\tau$ such that 
  \begin{displaymath}
    \tau^{L[j^R(T_3),x^{3\#}]}  ( x^{3\#}, g(\xi),  \cdot)
  \end{displaymath}
defines a surjection from $g(\xi)$ onto $\min(j^R(I^{\leq 3}_x) \setminus f(\xi))$. So there is $\gamma < [g]_{\nu}$ such that
\begin{displaymath}
  \alpha = \tau^{L[j \circ j^R(T_3), x^{3\#}]}(x^{3\#}, g(\xi),\gamma).
\end{displaymath}
By induction, $\gamma$ can be represented as
\begin{displaymath}
  \gamma = \sigma^{L[j \circ j^R (T_3),z]} (z, \beta_1,\dots,\beta_k)
\end{displaymath}
for uniform indiscernibles $\beta_1,\dots,\beta_k \leq \gamma$.
Now combine the last two formulas together. 
\end{proof}

Let $\beta_1,\dots,\beta_k \leq [id]_{\nu}$ be uniform indiscernibles and $z, \tau$ be given by Claim~\ref{claim:uniform_indis_generate} such that
\begin{displaymath}
  [\id]_{\nu} = \tau^{L[j\circ j^R(T_3), z]}(z, \beta_1, \dots, \beta_k).
\end{displaymath}
Let $\beta_i = [f_i]_{\nu}$. Let 
$h: j^R(\bolddelta{3}) \to (j^R(\bolddelta{3}))^k$ be 
\begin{displaymath}
  h(\xi) = (f_1(\xi),\dots,f_k(\xi)). 
\end{displaymath}
Let $g(\gamma_1,\dots,\gamma_k) = \tau^{L[j^R(T_3),z]}(z, \gamma_1,\dots,\gamma_k)$. Clearly $g \circ h= \id$ a.e.\ ($\nu$).  
There is a unique finite level $\leq 4$ tree $Q$ and nodes $(d_1,q_1)$, $\dots$, $(d_k,q_k) \in \dom(Q)$ such that $\dom(Q)$ is the upward closure of $\se{(d_1,q_1),\dots,(d_k,q_k)}$ and for $\nu$-a.e. $\xi$, there is $\vec{\delta}$ respecting $Q$ such that $\delta_{(d_i,q_i)} = f_i(\xi)$. 
If $\mu^Q_{\overrightarrow{(d,q)}} (A) = 1$, take clubs $C \subseteq \bolddelta{3}$ and $E \subseteq \omega_1$, both in $\mathbb{L}[T_3]$ such that $[E,C]^{Q \uparrow}_{\overrightarrow{(d,q)}} \subseteq A$. Since $[f_i]$ is a uniform indiscernible, we have $f_i (\xi) \in (\cup_{P \text{ finite}} j^P(E)) \cup \se{u_{\omega}} \cup j^R(C)$ for $\nu$-a.e. $\xi$. Thus $(h^{-1})'' A \in \nu$. Thus $h \circ g = id$ a.e. ($\mu^Q_{\overrightarrow{(d,q)}}$) from \cite[Fact 3.4]{sol_delta13coding}. This finishes the proof of (\ref{item:measure_analysis}:$n$) for $n=1$.

We then use the proof of (\ref{item:measure_analysis}:$n$) to show (\ref{item:coding}:$n$). Again we assume $n=1$. A set $A \subseteq j^R(\bolddelta{3})$ is \emph{$R$-simple} iff there are clubs $E \subseteq \omega_1$, $C \subseteq \bolddelta{3}$, both in $\mathbb{L}[T_3]$, a finite level $\leq 4$ tree $Q$, nodes $(d_1,q_1)$,$\dots$,$(d_k,q_k)$ and an $F : (j^R(\bolddelta{3}))^m \to j^R(\bolddelta{3})$ such that 
\begin{enumerate}
\item $F$ is 1-1 on $[E,C]^{Q \uparrow}_{\overrightarrow{(d,q)}}$,
\item $A = F'' [E,C]^{Q \uparrow}_{\overrightarrow{(d,q)}}$,
\item $F \in \mathbb{L}[j^R(T_3)]$. 
\end{enumerate}
Every subset of $j^R(\bolddelta{3})$ in $\mathbb{L}_{\bolddelta{5}}[T_4]$ is $\boldDelta{5}$. Let $G \subseteq \mathbb{R}^2 $ be a universal $\boldpi{5}$ set. Apply the proof of \cite[Section 3.3]{sol_delta13coding} but change $h : \mathbb{R} \twoheadrightarrow \power(\lambda)$ in the proof of Lemma 3.7 to $h(x) = G_x$. We see that every $A \in \power(j^R(\bolddelta{3})) \cap \mathbb{L}_{\bolddelta{5}}[T_4]$ is a countable union of $R$-simple sets. Applying everything above to $R = R^{(3)}$, we get a $\Delta^1_5$ coding  of subsets of $u \ooo$ in $\mathbb{L}_{\bolddelta{5}}[T_4]$.

(\ref{item:QW_desc_order}:$n$) simply follows from definitions. (\ref{item:QW_desc_target_extension}:$n$) follows from (\ref{item:YT_desc_origin_extension}:$n-1$) and (\ref{item:respecting_R}:$k$) for $k < n$, (\ref{item:q_respecting}:$k$) for $k \leq n$. (\ref{item:QW_desc_origin_extension}:$n$) follows from Lemmas~\ref{lem:discontinuity_1}, \ref{lem:discontinuity_2} and (\ref{item:respecting_R}:$k$) for $k < n$, (\ref{item:q_respecting}:$k$) for $k \leq n$.
(\ref{item:SQ_factor}:$n$) follows from \Los{}, (\ref{item:QW_desc_order}:$n$)(\ref{item:QW_desc_target_extension}:$n-1$) and (\ref{item:respecting_R}:$k$) for $k < n$, (\ref{item:q_respecting}:$k$) for $k \leq n$.

The proof of (\ref{item:tensor_product_QW}:$n$) generalizes the $n=0$ case in \cite{sharpIII}. We explain some of the details for the $n>0$ case. Parts (b)(c) follow from part (a) and \Los{}. We prove part (a). Let $Q,W$ be as given. By (\ref{item:tensor_product_TQ}:$n-1$), $(\id_{\comp{\leq 2n-2}{Q} \otimes  \comp{\leq 2n-2}{W}})^{\comp{\leq 2n-2}{Q}, \comp{\leq 2n-2}{W}}$ is the identity on $\bolddelta{2n+1}$ and agrees with $(\id_{Q \otimes W})^{Q,W}$. By (\ref{item:muP_is_measure}:$n$), the set of $\mathbb{L}[j^{Q \otimes W}(T_{2n+1})]$-cardinals in the interval $[\bolddelta{2n+1}, j^{Q \otimes W}(\bolddelta{2n+1})]$ is the closure of $\set{\seed^{ (Q \otimes W)}_{(2n+1,\mathbf{A})}}{\mathbf{A} \in \exexdesc( \comp{2n+1}{Q \otimes W})}$. We prove by induction on $\wocode{\mathbf{A}}_{\prec^{Q \otimes W}_{*}}$ that $(\id_{Q \otimes W})^{Q,W} \res (\seed^{Q \otimes W}_{(2n+1,\mathbf{A})}+1)$ is the identity. By elementarity, it suffices to prove that $(\id_{Q \otimes W})^{Q,W}$ is continuous at $\seed^{Q \otimes W}_{(2n+1,\mathbf{A})}$. We prove the typical case when $\comp{\leq 2n+1}{Q} = \emptyset$ and $\comp{\leq 2n+1}{W} \neq \emptyset$. 

Case 1: $\wocode{\mathbf{A}}_{\prec^{Q \otimes W}_{*}} = 0$. 

Then $\mathbf{A} = ( ( \mathbf{q}, \sigma),  \id_T, T )$ where putting $\mathbf{D} = (2n+2, \mathbf{q}, \sigma) \in \desc(Q,W,*)$, we have $\lh(\mathbf{D}) = 1$, $Q \otimes W (\mathbf{D}) = (T, (c,t,S))$,   $\mathbf{D} \prec^{Q \otimes W} \mathbf{D}'$ whenever $\lh(\mathbf{D}') = 1$ and $\mathbf{D} \neq \mathbf{D}'$. Let $Q'$ be the extension of $Q$ by adding the node $(2n+1, ((0)))$ into its domain such that $Q'(2n+1, ((0))) = (T, (c,t,S))$. Given any $g$ such that $[g]_{\mu^Q} < (\id_{Q \otimes W})^{Q , W}(\bolddelta{2n+1})$, we partition functions $f \in ((\bolddelta{2i+1})_{i \leq m})^{Q' \uparrow}$ according to whether or not ${}^{2n+1}[f]^{Q'}_{((0))} \leq g([f \res \rep(Q)]^Q)$. We obtain, by (\ref{item:strong_partition}:$n$) and the assumption on $g$, clubs $\vec{E} = (E_i)_{i \leq m} \in \prod_{i \leq m} \nu_{2i+1}$ such that for any $f \in \vec{E}^{Q'\uparrow}$,   ${}^{2n+1}[f]^{Q'}_{((0))} > g([f \res \rep(Q)]^Q)$. This implies that $[g]_{\mu^Q}<$the $u^{(2n-1)}_{E(2n-1)}$-th element of $E_m$. So $(\id_{Q \otimes W})^{Q,W}$ is continuous at $\bolddelta{2n+1} = \seed^{Q \otimes W}_{(2n+1,\mathbf{A})}$. 

Case 2: $\wocode{\mathbf{A}}_{\prec^{Q \otimes W}_{*}} > 0$. 

We prove the case when $\mathbf{A} = ((\mathbf{q}, \sigma), \pi, T)$, where putting $\mathbf{D} = (2n+2, \mathbf{q}, \sigma)$, we have $\mathbf{D} \in \desc(Q,W,*)$ and $\mathbf{q}$ is of discontinuous type. The other cases when $\mathbf{A} = ((\mathbf{q}, \sigma) \concat (-1), \pi, T)$ or $\mathbf{q}$ is of continuous type are similar. Put $\mathbf{q} = (q,P, (d_i,p_i,R_i)_{i \leq k+1})$. If $\wocode{\mathbf{A}}_{\prec^{Q \otimes W}_{*}}  = \eta +1$ is a successor, we must have that $d_{k+1} = 0$ and $(\pi,T)$ is discontinuous at $(d_{k},p_k)$. Let $\mathbf{A}' = ((\mathbf{q}, \sigma), \pi',T)$ where $\pi'$ and $\pi$ agree on $\dom(Q) \setminus\se{(d_{k},p_{k})}$, $\pi' (d_k,p_k) = \pred(\pi, T, (d_k,p_k))$. Then $\wocode{\mathbf{A}'}_{\prec^{Q \otimes W}_{*} } = \eta$. Given any $g$ such that $[g]_{\mu^Q} < (\id_{Q \otimes W})^{Q, W} (\seed^{Q \otimes  W}_{(2n+1, \mathbf{A})})$, we partition function  $f \in ((\bolddelta{2i+1})_{i \leq m})^{T \uparrow}$ according to whether or not $  [f]^T_{\pi'(d_k,p_k)} \leq g ( ([f ] ^T)_{\pi})  $. The homogeneous side must satisfy $>$, yielding that $[g]_{\mu^Q} < \seed^{Q \otimes W}_{(2n+1, \mathbf{A})}$. If $\wocode{\mathbf{A}}_{\prec^{Q \otimes W}_{*}} =\eta$ is a limit, we must have $d_{k+1}>0$ and we obtain 
$(\mathbf{A}_i,\pi_i,T_i)_{i < \omega}$ such that $\sup_{i<\omega} \wocode{\mathbf{A}_i}_{\prec^{Q \otimes W}_{*}} = \eta$ and for any $i$, $\mathbf{A}_i =( (\mathbf{q},\sigma) \concat (-1), \pi_i,T_i) \in \exexdesc(Q \otimes W)$.
We may further assume that: $T_0$ is a one-node extension of $T$, $T_{i+1}$ is a one-node extension of $T_i$, $t_0 \in \dom(T_0) \setminus \dom(T)$, $t_{i+1} \in \dom(T_{i+1}) \setminus \dom(T_i)$,   $ \comp{d_k}{\pi}_i(p_k) = t_i$ and  $d_k>1$ implies that $t_i^{-} = t_0^{-}$ and $\comp{d_k}{T}_i[t_i] = \comp{d_k}{T}_i[t_0]$. Let $T_{\omega} = \cup_i T_i$ and work with partition arguments based on $T_{\omega}$.

(\ref{item:local_definability_jQ}:$n$) follows from (\ref{item:tensor_product_QW}:$n$) and \Los{}.

(\ref{item:regular_cardinals_Q}:$n$) is proved as follows. Suppose $Q$ is a finite level $\leq 2n+2$ tree. By (\ref{item:ultrapower_bound}:$n$), $j^P (T_{2n+1}) \in \mathbb{L}_{\bolddelta{2n+3}}[T_{2n+2}]$ for any finite level-($2n+1$)-tree $P$, and hence by (\ref{item:muP_is_measure}:$n$),  
the set of $\mathbb{L}_{\bolddelta{2n+3}}[j^Q(T_{2n+2})]$-cardinals in the interval $[\bolddelta{2n+1}, \bolddelta{2n+3})$ is a subset of $\set{u^{(2n+1)}_{\xi}}{0 < \xi \leq E(2n+1)}$. But every $u^{(2n+1)}_{\xi}$ is an $\mathbb{L}_{\bolddelta{2n+3}}[j^Q(T_{2n+2})]$-cardinal by an easy adaption of Martin's proof that under AD,  if $\kappa$ has the strong partition property and $\mu$ is an ultrafilter on $\kappa$, then $j^{\mu}(\kappa)$ is a cardinal. The part on $\mathbb{L}_{\bolddelta{2n+3}}[j^Q(T_{2n+2})]$-regular cardinals is an easy generalization of the $n=0$ case in \cite{sharpIII}.

(\ref{item:Q_signature}:$n$) is a simple computation, using (\ref{item:regular_cardinals_Q}:$n$) for the part concerning uniform cofinality. 

(\ref{item:TQ_desc_order}:$n$) simply follows from definitions. (\ref{item:TQ_desc_target_extension}:$n$) follows from (\ref{item:QW_desc_origin_extension}:$n$) and (\ref{item:respecting_R}:$k$) for $k < n$, (\ref{item:q_respecting}:$k$) for $k \leq n$. (\ref{item:TQ_desc_origin_extension}:$n$) follows from Lemmas~\ref{lem:discontinuity_1}, \ref{lem:discontinuity_2} and (\ref{item:respecting_R}:$k$) for $k < n$, (\ref{item:q_respecting}:$k$) for $k \leq n$.
(\ref{item:XT_factor}:$n$) follows from \Los{},  (\ref{item:TQ_desc_order}:$n$)-(\ref{item:TQ_desc_target_extension}:$n$) and (\ref{item:respecting_R}:$k$) for $k < n$, (\ref{item:q_respecting}:$k$) for $k \leq n$.

(\ref{item:tensor_product_TQ}:$n$) follows from (\ref{item:tensor_product_QW}:$n$), (\ref{item:regular_cardinals_Q}:$n$) and the associativity of the $\otimes$-operator acting on level ($\leq 2n+2, \leq 2n+2, \leq 2n+1$) trees. 

We outline the proof of (\ref{item:order_type_embed}:$n$). Let $\theta : \rep(X) \to \rep(T)$ be an isomorphism. For $(e,x) \in \dom(X)$ and $e>1$, let $X_{\tree}(e,x) = W_{(e,x)}$. Let $\vec{E} = (E_i)_{i \leq n} \in \prod_{i \leq n} \nu_{2i+1}$, $(d_{e,x}, \mathbf{t}_{e,x}) \in \desc(T)$, $\mathbf{t}_{(e,x)} = (t_{e,x}, S_{e,x},\dots)$ when $d_{(e,x)}>1$, 
and let $\theta_{(e,x)} \in \mathbb{L}_{\bolddelta{2n+3}}[T_{2n+2}]$ be such that 
\begin{itemize}
\item $d_{(e,x)} = 1$ implies $e=1$ and $\theta(1, (x)) = (d_{(1,x)}, (\mathbf{t}_{(1,x)}))$, and
\item $d_{(e,x)}>1$ implies that for any $\vec{\alpha} \in [\vec{E}]^{W_{(e,x)}\uparrow}$,  $\theta(e, \vec{\alpha}\oplus_{\comp{e}{Q}} x) = (d_{(e,x)}, \theta_{(e,x)}(\vec{\alpha}) \oplus _{\comp{d_{(e,x)}}{T}} t_{(e,x)})$. 
\end{itemize}
If $d_{(e,x)}>1$, let $\vec{\beta}_{(e,x)} = (\beta_{(e,x),(a,s)})_{(a,s) \in \dom(S_{(e,x)})} = [\theta_{(e,x)}]_{\mu^{W_{(e,x)}}}$, 
for $k \leq n$, let
\begin{align*}
  B^{(2k+1)}_{(e,x)} &= \set{(a,s) \in \dom(S_{(e,x)})}{a=2k+1, \beta_{(e,x),(a,s)} < \bolddelta{2k+1}},\\
  B^{(2k+2)}_{(e,x)} &= \set{(a,s) \in \dom(S_{(e,x)}) \setminus B^{(2k+1)}_{(e,x)}}{ 2k+1 \leq a \leq 2k+2}.
\end{align*}
For $e>1$ and $2 \leq k \leq 2n+2$, let
\begin{align*}
  D^{(k)}_{(e,x)} & = \set{(a,s) \in B^{(k)}_{(e,x)}}{ \beta_{(e,x),(a,s)} \text{ is $S_{(e,x)}$-essentially continuous}},\\
  E^{(k)}_{(e,x)} & = B^{(k)}_{(e,x)} \setminus D^{(k)}_{(e,x)}.
\end{align*}
If $(a,s) \in B^{(k)}_{(e,x)}$ and $a>1$, let $(S_{(e,x)})_{\tree}(a,s) = U_{(e,x),(a,s)}$, let $\Phi_{(e,x),(a,s)}$ be the $\comp{\leq k-1}{(W_{(e,x)} \otimes U_{(e,x),(a,s)})}$-potential partial level $\leq k-1$ tower induced by $\beta_{(e,x),(a,s)}$, let $\lh(\Phi_{(e,x),(a,s)})$ be the length of the second coordinate of $\Phi_{(e,x),(a,s)}$, let $(\mathbf{B}_{(e,x),(a,s),i})_{i < v_{(e,x),(a,s)}}$ be the $\comp{\leq k-1}{(W_{(e,x)} \otimes U_{(e,x),(a,s)})}$-signature of $\beta_{(e,x),(a,s)}$, let $\sigma_{(e,x),(a,s)}$ be $\comp{\leq k-1}{(W_{(e,x)} \otimes U_{(e,x),(a,s)})}$-factoring map induced by $\beta_{(e,x),(a,s)}$, and let $(\gamma_{(e,x),(a,s),i})_{i \leq v_{(e,x),(a,s)}}$ be the $\comp{\leq k-1}{(W_{(e,x)} \otimes U_{(e,x),(a,s)})}$-approximation sequence of $\beta_{(e,x),(a,s)}$. Let
\begin{displaymath}
  \phi^1 : \set{\beta_{(e,x),(a,s)}}{(e,x) \in \dom(X), (a,s) \in B^{(1)}_{(e,x)}} \to Z^1
\end{displaymath}
be a bijection such that $Z^1$ is a level-1 tree and $v < v' \eqiv \phi^1(v) <_{BK} \phi^1(v')$. For $2 \leq k \leq 2n+2$, let
\begin{align*}
    \phi^k: &\set{(\mathbf{B}_{(e,x),(a,s),i}, \gamma_{(e,x), (a,s),i})_{i < l}}{(e,x) \in \dom(X), (a,s) \in B^{(k)}_{(e,x)}, l < \lh(\Phi_{(e,x),(a,s)})}\\
&~ \to Z^k \cup \se{\emptyset}
\end{align*}
be a bijection such that $Z^k$ is a tree of level-1 trees and $v \subseteq v' \eqiv \phi^k (v) \subseteq \phi^k(v')$, $v<_{BK} v' \eqiv \phi^k(v) <_{BK}\phi^k(v')$. Let
\begin{displaymath}
  Q = (\comp{1}{Q}, \dots, \comp{2n+2}{Q})
\end{displaymath}
be a level $\leq 2n+2$ tree where $\dom(\comp{k}{Q}) = Z^k$ and for $2 \leq k \leq 2n+2$,
\begin{align*}
  \comp{k}{Q}[\phi^k ((w_{(e,x),(a,s),i}, \gamma_{(e,x),(a,s),i})_{i < \lh(\Phi_{(e,x),(a,s)})})\concat (-1)]& = \Phi_{(e,x),(a,s)} \text{ when } (a,s) \in D^{(k)}_{(e,x)},\\
  \comp{k}{Q}[\phi^k ((w_{(e,x),(a,s),i}, \gamma_{(e,x),(a,s),i})_{i < \lh(\Phi_{(e,x),(a,s)})})]& = \Phi_{(e,x),(a,s)} \text{ when } (a,s) \in E^{(k)}_{(e,x)}.
\end{align*}
Let $\pi$ factor $(X,T,Q)$, where $\pi(1,x) = (1, \mathbf{t}_{(e,x)}, \emptyset)$ if $d_{(1,x)} = 1$, $\pi(e,x) = (d_{(e,x)}, \mathbf{t}_{(e,x)}, \tau_{(e,x)})$ if $d_{(e,x)}>1$, where $\tau_{(e,x)}$ factors $(S_{(e,x)}, Q, W_{(e,x)})$, $\tau_{(e,x)}(a,s)$ is equal to 
\begin{itemize}
\item  $ (1, \phi^1(\beta_{(e,x),(a,s)}),\emptyset)$ if $(a,s) \in B^{(1)}_{(e,x)}$,
\item $(k, (\phi^k (\mathbf{B}_{(e,x),(a,s),i}, \gamma_{(e,x),(a,s),i})_{i < \lh(\Phi_{(e,x),(a,s)})}\concat (-1)) \concat \Phi_{(e,x),(a,s)}, \sigma_{(e,x),(a,s)})$ if $(a,s) \in D^{(k)}_{(e,x)}$, $k>1$,
\item  $(k, (\phi^k (\mathbf{B}_{(e,x),(a,s),i}, \gamma_{(e,x),(a,s),i})_{i < \lh(\Phi_{(e,x),(a,s)})}) \concat \Phi_{(e,x),(a,s)}, \sigma_{(e,x),(a,s)})$ if $(a,s) \in E^{(k)}_{(e,x)}$, $k>1$.
\end{itemize}
The fact that $\theta$ is an isomorphism implies that $\pi$ minimally factors $(X,T,Q)$.

By analyzing the representative functions, we obtain the following lemmas in parallel to Lemmas~\ref{lem:continuity}-\ref{lem:discontinuity_2}. 
\begin{lemma}
  \label{lem:continuity_another}
  Suppose $Q,T$ are finite level $\leq 2n+1$ trees and $\pi$ factors $(Q,T)$. Suppose $\gamma < j^Q(\bolddelta{2n+1})$ and $\cf^{\mathbb{L}_{\bolddelta{2n+3}}[j^Q(T_{2n+2})]}(\gamma) = \seed^Q_{(d,\mathbf{q})}$, $(d,\mathbf{q}) \in \exdesc(Q)$ is regular.  Then
  \begin{enumerate}
  \item $\pi^T$ is continuous at $\gamma$ iff $(\pi,T)$ is continuous at $(d,\mathbf{q})$.
  \item Suppose $(\pi,T)$ is discontinuous at $(d,\mathbf{q})$. Let $(Q^{+}, \pi^{+})$ be the $T$-decomposition of $\pi$. Then $\pi^T_{\sup}(\gamma) = (\pi^{+})^T \circ j^{Q,Q^{+}}_{\sup} (\gamma)$.
  \end{enumerate}
\end{lemma}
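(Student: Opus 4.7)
The plan is to mirror the proof of Lemma~\ref{lem:continuity} from the odd level to the even level, replacing representative functions on $[(\bolddelta{2i+1})_{i<m}]^{P\uparrow}$ by functions $F$ on $[(\bolddelta{2i+1})_{i\le n}]^{Q\uparrow}$ and invoking the newly established apparatus at the level-$(2n{+}2)$ layer. Concretely, I would write $\gamma=[F]_{\mu^Q}$ for some $F\in \mathbb{L}_{\bolddelta{2n+3}}[T_{2n+2}]$. By hypothesis $\cf(\gamma)=\seed^Q_{(d,\mathbf{q})}$ with $(d,\mathbf{q})\in\exdesc(Q)$ regular, so by~(\ref{item:Q_signature}:$n$) we may assume $F$ has uniform cofinality $(d,\mathbf{q})$, that is, there is an order-preserving $G:[(\bolddelta{2i+1})_{i\le n}]^{Q\uparrow (d,\mathbf{q})}\to\bolddelta{2n+3}$ with $F(\vec\beta)=\sup_{\delta<\comp{d}{\beta}_{\mathbf{q}}} G(\vec\beta,\delta)$ for $\mu^Q$-a.e.\ $\vec\beta$. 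The image $\pi^T(\gamma)$ is represented modulo $\mu^T$ by $F\circ \id^{T,\cdot}_\pi$, i.e.\ the function $\vec\xi\mapsto F(\vec\beta)$ computed through the factoring of $\pi$ on $T$; passing through the definition of $\seed^Q_{(d,\mathbf{q})}$ and using~(\ref{item:QW_desc_target_extension}:$n$) and~(\ref{item:QW_desc_origin_extension}:$n$), its ``tail behavior'' is controlled exactly by $\pred(\pi,T,(d,\mathbf{q}))$ and by whether $\pred_{\prec^T}(\comp{d}{\pi}(\mathbf{q}))$ lies in the image of $\pi$.

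For part (1), I would directly compare the two cases of the definition of $(\pi,T)$ being continuous at $(d,\mathbf{q})$ in the five clauses listed in Section~\ref{sec:synt-defin} with the corresponding computation of $\sup (\pi^T)''\gamma$ via the signature/approximation sequence. In the continuous case, the $T$-approximation sequence of $\pi^T(\gamma)$ obtained by composing $F$ with $\id^{T,\cdot}_\pi$ has the same last component controlled by the image under $\pi$ of the uniform cofinality seed; hence $\pi^T(\gamma)$ is the sup of $\pi^T$ on ordinals below $\gamma$. In the discontinuous case, the $T$-uniform cofinality strictly extends beyond the image of $\pi$ on $(d,\mathbf{q})$, producing a strict gap between $\sup(\pi^T)''\gamma$ and $\pi^T(\gamma)$.

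For part (2), I would take the $T$-decomposition $(Q^+,\pi^+)$ of $(\pi,T)$ at $(d,\mathbf{q})$ provided by the syntactical clauses 1--5 in Section~\ref{sec:synt-defin}. The node added to $Q^+$ over $Q$ is, by construction, the unique one whose image under $\comp{d}{\pi}^+$ equals $\pred(\pi,T,(d,\mathbf{q}))$. Consequently, $j^{Q,Q^+}_{\sup}(\gamma)$ is, by Lemmas~\ref{lem:discontinuity_1} and~\ref{lem:discontinuity_2} applied in the even-level analog (which follow from the same argument as their odd-level statements using~(\ref{item:strong_partition}:$n$), (\ref{item:tensor_product_QW}:$n$), and~(\ref{item:q_respecting}:$n$)), represented by $F$ regarded as a function on the larger tree $Q^+$ that is constant in the new coordinate. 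Applying $(\pi^+)^T$ then yields $\sup(\pi^T)''\gamma$, establishing $\pi^T_{\sup}(\gamma)=(\pi^+)^T\circ j^{Q,Q^+}_{\sup}(\gamma)$.

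The main obstacle I expect is the bookkeeping in part (2): producing the even-level analogs of Lemmas~\ref{lem:discontinuity_1}--\ref{lem:discontinuity_2} requires a careful partition argument using~(\ref{item:strong_partition}:$n$) to match the strong partition homogeneity on $[(\bolddelta{2i+1})_{i\le n}]^{Q^+\uparrow}$ with the pointwise description of $\pred(\pi,T,(d,\mathbf{q}))$ across the five clauses of the decomposition definition, especially distinguishing $(d,\mathbf{q})\in\desc(Q)$ vs.\ $(d,\mathbf{q})\notin\desc(Q)$ and the parity subcases for $d=2i$. Once that case split is laid out cleanly, the rest reduces to an application of \L o\'s' theorem and the commutation identities in~(\ref{item:tensor_product_QW}:$n$) and~(\ref{item:tensor_product_TQ}:$n$).
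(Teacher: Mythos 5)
The paper gives no explicit proof of this lemma; it only asserts that Lemmas~\ref{lem:continuity_another}--\ref{lem:discontinuity_another_2} follow ``by analyzing the representative functions, \dots in parallel to Lemmas~\ref{lem:continuity}--\ref{lem:discontinuity_2}.'' Your plan --- representing $\gamma$ as $[F]_{\mu^Q}$ with the prescribed uniform cofinality, pulling back along $\pi$, and matching the syntactic continuity/decomposition clauses against the $Q$-signature and $Q$-approximation sequence via partition arguments from (\ref{item:strong_partition}:$n$) --- is a reasonable elaboration of exactly that parallel analysis, so I would call it essentially the same approach.
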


\begin{lemma}
  \label{lem:discontinuity_another_1}
  Suppose $(Q^{-},{(d,q,P)})$ is a partial level $\leq 2n+2$ tree and $Q$ is a completion of $(Q^{-}, {(d,q,P)})$. Suppose $T$ is a level $\leq 2n+2$ tree and $\pi,\pi'$ both factor $(Q,T)$, $\pi$ and $\pi'$ agree on $\dom(Q^{-})$, $\pi'(d,q) = \pred(\pi, T, (d,q))$. Then for any $\gamma <\bolddelta{2n+3}$ such that $\cf^{\mathbb{L}_{\bolddelta{2n+3}}[j^{Q^{-}}(T_{2n+2})]}(\gamma) = \seed^{Q^{-}}_{\ucf(Q^{-},(d,q,P))}$, we have
  \begin{displaymath}
    \pi^T \circ j^{Q^{-},Q}_{\sup} (\gamma) = (\pi')^T_{\sup} \circ j^{Q^{-},Q} (\gamma).
  \end{displaymath}
\end{lemma}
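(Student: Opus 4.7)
The plan is to lift the proof of the level-$(2n+1)$ analog, Lemma~\ref{lem:discontinuity_1}, to level $2n+2$, using the induction hypotheses (\ref{item:strong_partition}:$n$)--(\ref{item:tensor_product_TQ}:$n$) that have already been verified above. Begin by writing $\gamma = [F]_{\mu^{Q^{-}}}$ for some $F \in \mathbb{L}_{\bolddelta{2n+3}}[T_{2n+2}]$. By (\ref{item:Q_signature}:$n$) together with the cofinality hypothesis, $F$ may be chosen essentially discontinuous with uniform cofinality exactly $\ucf(Q^{-},(d,q,P))$; let $G$ be the witnessing cofinal function on $[(\bolddelta{2i+1})_{i \leq n}]^{Q^{-} \uparrow \ucf(Q^{-},(d,q,P))}$, so that for $\mu^{Q^{-}}$-a.e.\ $\vec{\beta}$ the map $\alpha \mapsto G(\vec{\beta},\alpha)$ is order-preserving and cofinal in $F(\vec{\beta})$.

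Next, express $j^{Q^{-},Q}(\gamma)$ via the same $G$ interpreted over $Q$: the extra node $(d,q)$ of $Q$ absorbs the $\alpha$-coordinate, so $j^{Q^{-},Q}(\gamma)$ is represented modulo $\mu^Q$ by the function whose value at $\vec{\beta}'$ is $G(\vec{\beta}' \res \dom(Q^{-}), \comp{d}{\beta}'_q)$, while $j^{Q^{-},Q}_{\sup}(\gamma) = \sup_{\alpha} j^{Q^{-},Q}([G(\cdot,\alpha)]_{\mu^{Q^{-}}})$. By Lemma~\ref{lem:continuity_another} and (\ref{item:regular_cardinals_Q}:$n$), $\pi^T$ is continuous at this supremum, giving
\[
\pi^T \circ j^{Q^{-},Q}_{\sup}(\gamma) = \sup_{\alpha} \pi^T\bigl(j^{Q^{-},Q}([G(\cdot,\alpha)]_{\mu^{Q^{-}}})\bigr).
\]
Simultaneously, $(\pi')^T_{\sup} \circ j^{Q^{-},Q}(\gamma)$ is the supremum of $(\pi')^T(\delta)$ over $\delta < j^{Q^{-},Q}(\gamma)$. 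The goal is to match these two suprema using the defining property $\pi'(d,q) = \pred(\pi,T,(d,q))$: each approximant on the left is handled by (\ref{item:TQ_desc_target_extension}:$n$) and (\ref{item:tensor_product_QW}:$n$) to show it equals $(\pi')^T$ applied to a specific ordinal strictly below $j^{Q^{-},Q}(\gamma)$, and conversely any approximant on the right is dominated by some $\pi^T\bigl(j^{Q^{-},Q}([G(\cdot,\alpha)]_{\mu^{Q^{-}}})\bigr)$ by a symmetric argument.

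The main obstacle lies in implementing this matching cleanly across the various cases for $\ucf(Q^{-},(d,q,P))$: whether it lies at level $\leq 2n$, at level $2n+1$, or at level $2n+2$, and whether $\pred(\pi,T,(d,q))$ lands in $\ran(\pi)$ or introduces a fresh node. In the ``fresh node'' case one must use that the predecessor is the $<_{BK}$-maximum witness not in the range, so that $\pi^T$-images from below accumulate precisely to the $(\pi')^T$-image of the new node; this is driven by (\ref{item:TQ_desc_order}:$n$) and the \Los{} computation underlying (\ref{item:tensor_product_TQ}:$n$). This parallels the delicate $\pred$-analysis in Lemma~\ref{lem:discontinuity_1}, but is now factored through a level $\leq 2n+1$ component via (\ref{item:tensor_product_QW}:$n$)(b)(c) to reduce each piece to a previously established identity.
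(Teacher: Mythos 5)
The paper gives no explicit proof of this lemma: it is grouped with Lemmas~\ref{lem:continuity_another} and~\ref{lem:discontinuity_another_2} and simply described as obtained ``in parallel to Lemmas~\ref{lem:continuity}--\ref{lem:discontinuity_2}'' by analyzing representative functions. Your overall plan --- lift the level-$(2n+1)$ argument using the already-established induction hypotheses and partition relations --- is therefore the intended approach.

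However there are two concrete errors in the middle of the sketch. First, you assert that $j^{Q^{-},Q}(\gamma)$ is represented modulo $\mu^Q$ by $\vec{\beta}' \mapsto G(\vec{\beta}' \res \dom(Q^{-}), \comp{d}{\beta}'_q)$. This cannot be right: $j^{Q^{-},Q}(\gamma)$ is represented by $\vec{\beta}' \mapsto F(\vec{\beta}' \res \dom(Q^{-}))$, and since $F(\vec{\beta}) = \sup\{G(\vec{\beta},\eta) : (\vec{\beta},\eta) \in [\cdot]^{Q^{-}\uparrow\ucf(Q^{-},(d,q,P))}\}$ while $\comp{d}{\beta}'_q$ lies strictly below the relevant $\seed$-coordinate for respecting $\vec{\beta}'$, the function you wrote is pointwise strictly smaller than $F \circ (\cdot \res \dom(Q^{-}))$. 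It is $j^{Q^{-},Q}_{\sup}(\gamma)$, not $j^{Q^{-},Q}(\gamma)$, that is represented by the $G$-with-new-coordinate function. Second, and more seriously, the claim that Lemma~\ref{lem:continuity_another} and (\ref{item:regular_cardinals_Q}:$n$) give continuity of $\pi^T$ at $j^{Q^{-},Q}_{\sup}(\gamma)$ is not justified. To apply Lemma~\ref{lem:continuity_another} you must first identify $\cf^{\mathbb{L}_{\bolddelta{2n+3}}[j^Q(T_{2n+2})]}(j^{Q^{-},Q}_{\sup}(\gamma))$ as $\seed^Q_{(d',\mathbf{q}')}$ for a regular $(d',\mathbf{q}') \in \exdesc(Q)$, and the representative analysis points to a description involving the \emph{new} node $(d,q)$; but the hypothesis $\pi'(d,q) = \pred(\pi,T,(d,q))$ means precisely that $(\pi,T)$ is \emph{dis}continuous at the corresponding description, so Lemma~\ref{lem:continuity_another} yields discontinuity of $\pi^T$ there, not continuity. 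The content of the lemma is exactly that the jump of $\pi^T$ at $j^{Q^{-},Q}_{\sup}(\gamma)$ is measured by the gap between $\sup((\pi')^T)'' j^{Q^{-},Q}_{\sup}(\gamma)$ and $\sup((\pi')^T)'' j^{Q^{-},Q}(\gamma)$. A pure approximant-matching argument predicated on commuting $\pi^T$ through the supremum will not see this gap; you need the partition argument at level $2n+2$ (as in the proof of (\ref{item:tensor_product_QW}:$n$)) to show the two jumps coincide. Your final paragraph gestures at this delicate $\pred$-analysis, but the earlier continuity claim short-circuits it.
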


\begin{lemma}
 \label{lem:discontinuity_another_2} 
Suppose $(Q,(d,q,P))$ is a partial level $\leq 2n+2$ tree, $\ucf(Q,(d,q,P)) = (d^{*}, \mathbf{q}^{*})$ and $\pi$ factors $(Q,T)$. Suppose $\gamma < \bolddelta{2n+3}$ and either
  \begin{enumerate}
  \item $d=0$, $Q^{+} = Q$, $\pi' = \pi$, $\cf^{\mathbb{L}[j^Q(T_{2n+2})]}(\gamma) = \omega$, or
  \item $d > 0$, $Q^{+}$ is a completion of $Q$, $\pi'$ factors $(Q^{+}, T)$, $\pi = \pi' \res \dom(Q)$, $\pi'(d,q) = \pred(\pi, T, (d^{*},\mathbf{q}^{*})) $, $\cf^{\mathbb{L}_{\bolddelta{2n+3}}[j^Q(T_{2n+2})]}(\gamma) = \seed^Q_{(d^{*}, \mathbf{q}^{*})}$.
  \end{enumerate}
  Then
  \begin{displaymath}
    \pi^T(\gamma) = (\pi')^T_{\sup} \circ j^{Q,Q^{+}}(\gamma).
  \end{displaymath}
\end{lemma}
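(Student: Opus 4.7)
\medskip

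\noindent\textbf{Proof proposal.} The plan is to mimic the proof of Lemma~\ref{lem:discontinuity_2}, the level $\leq 2n+1$ analog, using the representative function of $\gamma$ modulo $\mu^Q$ and the $Q$-signature/approximation-sequence analysis provided by (\ref{item:Q_signature}:$n$) together with (\ref{item:regular_cardinals_Q}:$n$). Write $\gamma = [F]_{\mu^Q}$ for some $F \in \mathbb{L}_{\bolddelta{2n+3}}[T_{2n+2}]$, and choose an order-preserving witness $G \in \mathbb{L}_{\bolddelta{2n+3}}[T_{2n+2}]$ for the uniform cofinality of $F$: in case~1, $G : [(\bolddelta{2i+1})_{i\leq n}]^{Q\uparrow(0,-1)} \to \bolddelta{2n+3}$ and $F(\vec{\beta}) = \sup_{k<\omega} G(\vec{\beta},k)$ a.e., while in case~2, $G : [(\bolddelta{2i+1})_{i\leq n}]^{Q\uparrow (d^*,\mathbf{q}^*)} \to \bolddelta{2n+3}$ and $F(\vec{\beta}) = \sup_{\eta < \comp{d^*}{\beta}_{\mathbf{q}^*}} G(\vec{\beta},\eta)$ a.e.

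For case~1, since $\pi' = \pi$ and $Q^+ = Q$ the right-hand side is simply $\pi^T_{\sup}(\gamma)$. As $\gamma$ has $\mathbb{L}_{\bolddelta{2n+3}}[j^Q(T_{2n+2})]$-cofinality $\omega$, we can pick, by \L os applied to $G$, a strictly increasing cofinal $\omega$-sequence $(\gamma_k)_k$ below $\gamma$ each of the form $[\vec{\beta}\mapsto G(\vec{\beta},k)]_{\mu^Q}$. By elementarity of $\pi^T : \mathbb{L}_{\bolddelta{2n+3}}[j^Q(T_{2n+2})] \to \mathbb{L}_{\bolddelta{2n+3}}[j^T(T_{2n+2})]$, $\pi^T(\gamma) = \sup_k \pi^T(\gamma_k)$, which is $\pi^T_{\sup}(\gamma)$ by definition.

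For case~2, the essential input is the identification of $\pred(\pi,T,(d^*,\mathbf{q}^*))$. Let $\vec{\delta}$ range over $[(\bolddelta{2i+1})_{i\leq n}]^{Q^+\uparrow}$ and let $\vec{\beta} = \vec{\delta}\res\dom(Q)$. The definition of $\pred(\pi,T,(d^*,\mathbf{q}^*))$ together with the description of $\pi'$ forces that the coordinate $\comp{d}{\delta}_q$ in $\vec{\delta}$ is exactly the one that, under $\pi'$, occupies the maximal $<_{BK}$-position in $T\se{\pi(d^*,\mathbf{q}^*),-}$ (or the analogous slot for $(d^*,\mathbf{q}^*)\notin\desc(Q)$); in particular, for $\mu^{Q^+}$-a.e.\ $\vec{\delta}$ one has $\comp{d}{\delta}_q < \comp{d^*}{\beta}_{\mathbf{q}^*}$ and $\comp{d}{\delta}_q$ is cofinal in $\comp{d^*}{\beta}_{\mathbf{q}^*}$ as $\vec{\delta}$ ranges. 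Define $H(\vec{\delta}) = G(\vec{\beta},\comp{d}{\delta}_q)$; by (\ref{item:Q_signature}:$n$) and (\ref{item:q_respecting}:$n$) this $H$ witnesses that $j^{Q,Q^+}(\gamma) = [H]_{\mu^{Q^+}}$ up to a factor which commutes with $(\pi')^T_{\sup}$, and \L os then yields
\[
  (\pi')^T_{\sup}\circ j^{Q,Q^+}(\gamma) \;=\; \sup\bigl\{(\pi')^T([H']_{\mu^{Q^+}}) : H' < H \text{ a.e.}\bigr\},
\]
with the sup on the right equal to $\pi^T(\gamma) = [\vec{\beta}\mapsto \pi^T(F(\vec{\beta}))]_{\mu^T}$ by the elementarity of $\pi^T$ and the cofinality of the $\comp{d}{\delta}_q$ in $\comp{d^*}{\beta}_{\mathbf{q}^*}$.

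The main obstacle, and the place where the argument has to be executed carefully, is the second case: one must verify that the index shift from $(Q,(d^*,\mathbf{q}^*))$ to $(Q^+,(d,q))$ lines up with the definition of $\pred(\pi,T,(d^*,\mathbf{q}^*))$ across all four clauses of the definition of $\pred$ (for $d^*=1$, for $d^*>1$ with $\mathbf{q}^* \in\desc$ of discontinuous type, of continuous type, or $\notin\desc$), and in each clause produce the function $H$ and invoke (\ref{item:q_respecting}:$n$) and (\ref{item:Q_signature}:$n$) to identify $j^{Q,Q^+}(\gamma)$ as $[H]_{\mu^{Q^+}}$. This bookkeeping is exactly parallel to the level $\leq 2n+1$ case of Lemma~\ref{lem:discontinuity_2}, so the argument transfers verbatim once the indexing is set up.
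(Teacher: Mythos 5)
The paper gives no written proof of this lemma---after stating Lemmas~\ref{lem:continuity_another}--\ref{lem:discontinuity_another_2} it only remarks that they follow ``by analyzing the representative functions'' in parallel to the level $\leq 2n+1$ case, which is itself stated without proof in this paper. Your choice of method (work with a witness $G$ for the uniform cofinality of a representative $F$ of $\gamma$, then push through \Los{}) is the intended one, and your case~1 is correct: an $\omega$-sequence cofinal in $\gamma$ living in $\mathbb{L}_{\bolddelta{2n+3}}[j^Q(T_{2n+2})]$ is preserved coordinatewise by the elementary $\pi^T$, which fixes $\omega$, giving $\pi^T(\gamma)=\pi^T_{\sup}(\gamma)$.

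Case~2, however, contains a concrete error that breaks the argument. With $H(\vec{\delta}) = G(\vec{\beta}, \comp{d}{\delta}_q)$ and $\vec{\beta} = \vec{\delta}\res\dom(Q)$, the new coordinate satisfies $\comp{d}{\delta}_q < \comp{d^*}{\beta}_{\mathbf{q}^*}$ for $\mu^{Q^+}$-a.e.\ $\vec{\delta}$, so $H(\vec{\delta}) = G(\vec{\beta},\comp{d}{\delta}_q) < \sup_{\eta<\comp{d^*}{\beta}_{\mathbf{q}^*}}G(\vec{\beta},\eta) = F(\vec{\beta})$ a.e., and hence by \Los{} $[H]_{\mu^{Q^+}}$ is \emph{strictly} less than $j^{Q,Q^+}(\gamma)$. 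Your assertion that ``$j^{Q,Q^+}(\gamma)=[H]_{\mu^{Q^+}}$ up to a factor which commutes with $(\pi')^T_{\sup}$'' therefore does not hold, and your displayed supremum over $H'<H$ a.e.\ only reaches $(\pi')^T_{\sup}([H]_{\mu^{Q^+}})$, which is strictly below $(\pi')^T(j^{Q,Q^+}(\gamma))=\pi^T(\gamma)$; the supremum defining $(\pi')^T_{\sup}\circ j^{Q,Q^+}(\gamma)$ must range over all $H'$ with $H'(\vec{\delta})<F(\vec{\delta}\res\dom(Q))$ a.e., not over $H'<H$. The formula $\pi^T(\gamma)=[\vec{\beta}\mapsto\pi^T(F(\vec{\beta}))]_{\mu^T}$ is also not how the factor map acts: the correct identity is $\pi^T([F]_{\mu^Q})=[\vec{\xi}\mapsto F(\vec{\xi}_{\pi})]_{\mu^T}$ with $\vec{\xi}_{\pi}=(\xi_{\pi(d,q)})_{(d,q)\in\dom(Q)}$, rather than an internal application of $\pi^T$ to the values of $F$. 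What is actually needed is not articulated in your sketch. Since $\pi=\pi'\res\dom(Q)$, one has $\pi^T=(\pi')^T\circ j^{Q,Q^+}$, so the conclusion is precisely that $(\pi')^T$ is continuous at $j^{Q,Q^+}(\gamma)$. After computing $\cf^{\mathbb{L}_{\bolddelta{2n+3}}[j^{Q^+}(T_{2n+2})]}(j^{Q,Q^+}(\gamma))$, the substantive step is the combinatorial verification that $\pi'(d,q)=\pred(\pi,T,(d^*,\mathbf{q}^*))$ together with the clauses of $\ucf(Q,(d,q,P))$ makes $(\pi',T)$ continuous at the relevant node of $Q^+$. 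You correctly name $\pred$ as the essential input but do not carry out that verification, and deferring to Lemma~\ref{lem:discontinuity_2} does not close the gap since that lemma is also stated without proof here.
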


(\ref{item:rep_R}:$n$) follows from (\ref{item:Q_direct_limit_wf}:$n$). The proof of (\ref{item:gamma_r_continuous_type}:$n$) generalizes Lemma~\ref{lem:gamma_r_continuous_type}, using (\ref{item:tensor_product_QW}:$n$) when necessary. 
The proof of (\ref{item:respecting_R}:$n$) generalizes the lower levels in an obvious way, using (\ref{item:regular_cardinals_Q}:$n$) when necessary. 

The proof of (\ref{item:R_description}:$n$) is an easy generalization of the lower levels, using Lemmas~\ref{lem:continuity_another}-\ref{lem:discontinuity_another_2} when necessary.

(\ref{item:YT_desc_order}:$n$) simply follows from definitions. (\ref{item:YT_desc_target_extension}:$n$) follows from (\ref{item:TQ_desc_origin_extension}:$n$) and (\ref{item:q_respecting}:$k$)(\ref{item:respecting_R}:$k$) for $k \leq n$. (\ref{item:YT_desc_origin_extension}:$n$) follows from Lemmas~\ref{lem:discontinuity_another_1}-\ref{lem:discontinuity_another_2}  and (\ref{item:q_respecting}:$k$)(\ref{item:respecting_R}:$k$) for $k \leq n$.

The proof of (\ref{item:order_type_embed_another}:$n$) is similar to (\ref{item:order_type_embed}:$n$).

We outline the proof of (\ref{item:represent_cofinal}:$n$). Suppose $R, (d,q,P), k$ are as given. The case $k<n$ follows from (\ref{item:represent_cofinal}:$k$). The case $d=2n+1$ follows from  (\ref{item:represent_another}:$n-1$). Assume now $d=2n+2$. Ordinals of the form $\tau^{j^P(M_{2n+2,\infty}^{-}(x))} ( x, \seed^P_{((0))} )$ are cofinal in $j^P(\bolddelta{2n+1})$. Fix such an $\alpha = \tau^{j^P(M_{2n+2,\infty}^{-}(x))}(x, \seed^P_{((0))})$ and we build a $\Pi^1_{2n+2}$-wellfounded level-($2n+2$) tree $T$ such that $T_{\tree}(((0))) = P$ and $\llbracket ((0)) \rrbracket_T > \alpha$. Indeed, we build $T$ satisfying $T_{\tree}(((0))) = P$ and for any $z \in [\dom(T)]$, 
$T(z) \DEF \cup_i T(z \res i)$ is a level $\leq 2n+1$ tree as the ``join'' of $(W_{(z)_i,(z)_{i+1}})_{  -1\leq i < \omega}$, where $(z)_{-1}$ is computable from $(z)_0$ so that if $(z)_0$ codes $(x, P, \gamma, Q, \vec{\beta}, \gcode{\sigma})$, a level $\leq 2n+1$ code for an ordinal in $\bolddelta{2n+1}$ relative to $x$, then $(z)_{-1} $ codes $(x, P, \gamma, Q^{(2n)}_0, \vec{\beta}^{(2n)}, \gcode{\tau})$, $\vec{\beta}^{(2n)}$ respects $Q^{(2n)}_0$, and 
$W_{(v,v')}$ is $\Pi^1_{2n+1}$-wellfounded iff $v,v'$ code $\bar{v}, \bar{v}'$,  level $\leq 2n+1$ codes for ordinals in $\bolddelta{2n+1}$ relative to $x$, such that $\sharpcode{\bar{v}} > \sharpcode{\bar{v}'}$. 
 
(\ref{item:represent}:$n$) follows from (\ref{item:represent_cofinal}:$n$) by a straightforward generalization of the corresponding arguments in \cite{sharpII,sharpIII}. The reader who can follow us this far should have no problem figuring out the details. The proof of (\ref{item:represent_another}:$n$) is similar to (\ref{item:represent}:$n$).

\section*{Acknowledgements}
The breakthrough ideas of this paper were obtained during the AIM workshop on Descriptive inner model theory, held in Palo Alto, and the Conference on Descriptive Inner Model Theory, held in Berkeley, both in June, 2014. 
The author greatly benefited from conversations with Rachid Atmai and Steve Jackson that took place in these two conferences. 
The final phase of this paper was completed whilst the author was a visiting fellow at the Isaac Newton Institute for Mathematical Sciences in the programme `Mathematical, Foundational and Computational Aspects of the Higher Infinite' (HIF) in August and September, 2015 funded by NSF Career grant DMS-1352034 and EPSRC grant EP/K032208/1.

\bibliography{sharp}{}
\bibliographystyle{plain}

\end{document}